\newtheorem{thm}{Theorem}[section]
\newtheorem{coro}[thm]{Corollary}
\newtheorem{lemma}[thm]{Lemma}
\newtheorem{prop}[thm]{Proposition}
\newtheorem{conj}[thm]{Conjecture}
\theoremstyle{remark}
\newtheorem{remark}[thm]{\textbf{Remark}}
\theoremstyle{definition}
\newtheorem{defn}[thm]{Definition}
\newtheorem{example}[thm]{Example}
\numberwithin{equation}{thm}
\newcommand{\newpara}{\noindent\refstepcounter{thm}{\bf(\thethm)\;}}
\newcommand{\C}{\mathbb C}
\newcommand{\N}{\mathbb N}
\newcommand{\Q}{\mathbb{Q}}
\newcommand{\R}{\mathbb R}
\newcommand{\Z}{\mathbb Z}
\newcommand{\al}{\alpha}
\newcommand{\Cor}{\mathrm{Cor}}
\newcommand{\car}{\mathrm{char}}
\newcommand{\ind}{\mathrm{ind}}
\newcommand{\per}{\mathrm{per}}
\newcommand{\sd}{\mathrm{sd}}
\newcommand{\cd}{\mathrm{cd}}
\newcommand{\Br}{\mathrm{Br}}
\newcommand{\Nrd}{\mathrm{Nrd}}
\newcommand{\lra}{\longrightarrow}
\newcommand{\ov}[1]{\overline{#1}}
\newcommand{\simto}{\xrightarrow{\sim}}
\newcommand{\cA}{\mathcal{A}}
\newcommand{\cB}{\mathcal{B}}
\newcommand{\cR}{\mathcal{R}}
\newcommand{\cS}{\mathcal{S}}
\begin{document}
\title{\textbf{On the Rost divisibility of henselian discrete valuation fields of cohomological dimension 3}}
\author{Yong HU and Zhengyao WU}

\maketitle

\begin{abstract}
Let $F$ be a field, $\ell$  a prime and  $D$  a central division $F$-algebra of $\ell$-power degree. By the Rost kernel of $D$ we mean the subgroup of $F^*$ consisting of elements $\lambda$ such that the cohomology class $(D)\cup (\lambda)\in H^3(F,\,\mathbb{Q}_{\ell}/\Z_{\ell}(2))$  vanishes. In 1985, Suslin conjectured that the Rost kernel
is generated by $i$-th powers of reduced norms from  $D^{\otimes i}$ for all $i\ge 1$. Despite of known counterexamples, we prove some new cases of Suslin's conjecture. We assume $F$ is a henselian discrete valuation field with  residue field $k$ of characteristic different from $\ell$. When $D$ has period $\ell$, we show that Suslin's conjecture holds if either   $k$ is a $2$-local field or  the cohomological $\ell$-dimension $\mathrm{cd}_{\ell}(k)$ of $k$ is $\le 2$. When the period is arbitrary, we prove the same result when $k$ itself is a henselian discrete valuation field with $\mathrm{cd}_{\ell}(k)\le 2$. In the case $\ell=\car(k)$ an analog is obtained for tamely ramified algebras. We conjecture that Suslin's conjecture holds for all fields of cohomological dimension 3.
\end{abstract}



\noindent {\bf Key words:} Reduced norms, division algebras over henselian fields, Rost invariant,  biquaternion algebras

\medskip

\noindent {\bf MSC classification:} 11S25, 17A35, 11R52, 16K50


\section{Introduction}

 Let $F$ be  a field and $\ell$ a prime number. For simplicity we first assume $\ell$ is different from the characteristic of $F$.  For an integer $d\ge 1$, we write $H^d(F)=H^d(F,\,\mathbb{Q}_{\ell}/\Z_{\ell}(d-1))$, the inductive limit of the Galois cohomology groups $H^d(F,\,\mu_{\ell^r}^{\otimes (d-1)}),\,r\ge 1$. An element $\al\in H^2(F)$ may be identified with a Brauer class in the $\ell$-primary torsion part of the Brauer group  $\Br(F)$.  Taking the cup product with $\al$ yields a well defined group homomorphism
 \[
R_{\al}\;:\;\; F^*\lra H^3(F)\;;\quad\lambda\longmapsto \al\cup (\lambda)\,.
 \]The kernel of  $R_{\al}$  will be called the \emph{Rost kernel} of $\al$ {and denoted by $\cR(\al)$}. Write $\Nrd(\al):=\Nrd(D^*)$, where $D$ is the central division $F$-algebra in the Brauer class $\al$ and $\Nrd: D^*\to F^*$ denotes the reduced norm map on the nonzero elements of $D$. It is well known that  $\Nrd(\al)$ is contained in the Rost kernel of $\al$.

We remark that when  $\ell=\car(F)$,  the groups $H^d(F)=H^d(F,\,\mathbb{Q}_{\ell}/\Z_{\ell}(d-1)),\,d\ge 1$ and the cup product maps
$H^d(F)\times F^*\to H^{d+1}(F)\,;\;(\theta,\,\lambda)\mapsto \theta\cup(\lambda)$ can still be defined in an appropriate way, and what we said above remains valid (see e.g. \cite[$\S$3.2]{KatoII80}, \cite{Kato82}, or \cite[Appendix\;A]{Merkurjev03inGMScohinv}).

\medskip

In many interesting cases it is known that {the Rost kernel consists precisely of the reduced norms}.  We collect some most important results in this direction.

\begin{example}\label{1p1nnn}
Suppose as above that the period of $\al\in\Br(F)$ is a power of $\ell$. In all the following cases we have {$\cR(\al)=\Nrd(\al)$}.

\begin{enumerate}
\item The (Schur) index of $\al$ is $\ell$.

 This case follows from a well known theorem of Merkurjev and Suslin  (\cite[Theorem\;12.2]{MS82NormResidue}, \cite[Theorems\;24.4]{Suslin85}) in the case $\ell\neq\car(F)$ and its $p$-primary counterpart (\cite[p.94, Thm.\;6]{Gille00}) in the case $\ell=\car(F)$.
  \item The \emph{separable $\ell$-dimension} $\sd_{\ell}(F)$ of $F$ is $\le 2$. (\cite[Thm.\;24.8]{Suslin85} and \cite[p.94, Thm.\;7]{Gille00}.)

  The notion $\sd_{\ell}(F)$ is introduced by Gille in \cite[p.62]{Gille00} (cf. Remark\;\ref{5p4nnn}). If $\ell\neq\car(F)$, it is the same as the cohomological $\ell$-dimension considered in \cite{SerCohGal94}.

  \item $F$ is a global field.

  If $\ell\neq 2$ or if $F$ has no real places (e.g. $F$ is a global function field), this is covered by the previous case. When $F$ is a real number field and $\ell=2$, this follows from a Hasse principle proved by Kneser \cite[Chapter\;5, Theorem 1.a]{Kn69}.
  \item $F$ is a \emph{$2$-local field}, i.e., a complete discrete valuation field whose residue field $k$ is a non-archimedean local field (with finite residue field).

      This case is a consequence of Kato's two-dimensional local class field theory (\cite[p.657, $\S$3.1, Thm.\;3 (2)]{KatoII80}). The case with $\ell\neq\car(k)$ is also observed in \cite[Thm.\;1.3]{ParimalaPreetiSuresh2018}.

  \item The following cases are established in a recent work of Parimala, Preeti and Suresh \cite[Theorems\;1.1 and 1.3]{ParimalaPreetiSuresh2018}:

  \begin{enumerate}
    \item $F$ is a complete discrete valuation field whose  residue field $k$ is a global field of characteristic $\neq \ell$, and if $\ell=2$, suppose $k$ has no real places.

    (If $k$ is a global function field of characteristic $\ell$, the same result is obtained in \cite{Hu19} under the addition hypothesis that $\al$ has period $\ell$ or is tamely ramified. If $k$ is a number field with real places, see Example\;\ref{2p2nnn} (4.b) for a weaker conclusion.)
    \item $F$ is the function field of an algebraic curve over a non-archimedean local field $K$, and $\ell$ is different from the residue characteristic of $K$.
  \end{enumerate}
  \item  $\al$ has index 4, $\car(F)\neq 2$, and  nonsingular quadratic forms of dimension $12$ over all finite extensions of $F$ are all isotropic. (For example, $F$ can be an extension of $\Q_2$ of transcendence degree 1, by \cite[Thm.\;4.7]{PS14invent}.)

  In \cite[Thm.\;4.1]{PreetiSoman15}, this result is stated with the extra assumptions that $F$ has characteristic 0 and cohomological dimension $\le 3$. A careful inspection shows that these assumptions are not used in the proof. (Notice however that the cohomolgical 2-dimension is indeed $\le 3$ under our assumption on the isotropy of 12 dimensional forms.)

  (We will prove a characteristic 2 version in Prop.\;\ref{2p1}.)
\end{enumerate}
\end{example}

In general, the equality $\cR(\al)=\Nrd(\al)$ may not hold. In fact, there exists a field  $F$ of cohomological dimension 3 in characteristic 0 and a biquaternion algebra over $F$ whose Rost kernel contains more elements than the reduced norms (see \cite[Remark\;5.1]{CTPaSu}).  A description of the Rost kernel was conjectured by Suslin as follows:

\medskip

\newpara\label{1p2nnn}\noindent {\bf Suslin's conjecture} (\cite[Conjecture\;24.6]{Suslin85}): If $\al$ has period $\per(\al)=\ell^n$, then
\begin{equation}\label{1p2p1}
\cR(\al)=\prod^n_{i=0}\Nrd(\ell^{i}\al)^{\ell^{i}}=\Nrd(\al)\cdot \Nrd(\ell\al)^{\ell}\cdots \Nrd(\ell^{n-1} \al)^{\ell^{n-1}}\cdot (F^*)^{\ell^n}\;.
\end{equation}
Here for a  multiplicative abelian group $A$ and an integer $m\ge 1$, $A^m$ denotes the subgroup consisting of $m$-th powers in $A$. {We will call the group $\prod^n_{i=0}\Nrd(\ell^{i}\al)^{\ell^{i}}$ the \bf \emph{Suslin group}} { of $\al$  and denote it by $\mathcal{S}(\al)$. Suslin's conjecture amounts to saying that $\cR(\al)=\cS(\al)$. (In \cite{Merkurjev95ProcSympos58} our groups  $\cS(\al)$ and $\cR(\al)$ are denoted by $A(D)$ and $B(D)$ respectively.)}

We may also consider the induced map
\[
\tilde{R}_{\al}\,:\;\; F^*/\Nrd(\al)\lra H^3(F)\;;\quad\lambda\longmapsto \al\cup (\lambda)\,,
\]which is often called the \emph{Rost invariant} map  associated to the semisimple  simply connected algebraic group $\mathbf{SL}_1(D)$ (see e.g. \cite{Merkurjev03inGMScohinv}). In the special case $\per(\al)=\ell$, \eqref{1p2p1} means that the kernel of $\tilde{R}_{\al}$ consists precisely of the $\ell$-divisible elements in the group $F^*/\Nrd(\al)$.

\medskip

We feel that the definition below is now well motivated.

\begin{defn}\label{1p3}
  Let $F$ be a field, $\ell$ a prime number and $n$ a positive integer. We say that $F$ is {\bf \emph{Rost $\ell^n$-divisible}} if Suslin's conjecture holds (i.e. {the Rost kernel equals the Suslin group}) for all $\al\in\Br(F)$ whose period divides $\ell^n$.

  We say that $F$ is {\bf \emph{Rost $\ell^{\infty}$-divisible}} if it is Rost $\ell^n$-divisible for all $n\ge 1$. If $F$ is Rost $\ell^{\infty}$-divisible for all primes $\ell$, we say that $F$ is {\bf \emph{Rost divisible}}. (In that case, {$\cR(\al)=\cS(\al)$ for all Brauer classes $\al$ over $F$}, by Prop.\;\ref{3p4nnn}.)
\end{defn}

Of course, the cases covered by Example\;\ref{1p1nnn} are all examples of Rost divisibility. On the other hand, {in the example where $\cR(\al)\neq \Nrd(\al)$  discussed in \cite[Remark\;5.1]{CTPaSu} $\al$ is the class of a biquaternion algebra and for that $\al$ Suslin's conjecture does hold}. In fact, in characteristic different from 2, {$\cR(\al)$ and $\cS(\al)$ are the same for all Brauer classes $\al$} of degree at most 4. This was remarked by Suslin immediately after the statement of his conjecture (\cite[Conjecture\;24.6]{Suslin85}). His proof, which uses $K$-cohomology groups of Severi--Brauer varieties, can be found in \cite[$\S$1]{Merkurjev95ProcSympos58}. For biquaternion algebras, an alternative proof is given in \cite{KnusLamShapiroTignol95}. In $\S$\ref{sec2} we will extend this last result to characteristic 2 and derive a number of examples of Rost 2-divisibility.

However, not all fields are Rost 2-divisible and Suslin's conjecture can be false for some $\al$ which is a tensor product of three quaternion algebras (see e.g. \cite[p.283]{KnusLamShapiroTignol95}). For an odd prime $\ell$, Merkurjev constructed in \cite[$\S$2]{Merkurjev95ProcSympos58} over a certain field $F$  tensor products of two cyclic algebras of degree $\ell$ that violate Suslin's conjecture. We notice that in these known counterexamples the cohomological dimension $\cd_{\ell}(F)$ must be greater than 3 (see $\S$\ref{sec6} for some more details). We therefore conjecture that Suslin's conjecture is true whenever $\cd_{\ell}(F)\le 3$ if $\car(F)\neq\ell$ (see Conjecture\;\ref{6p2} for a more precise statement).

The goal of this paper is to provide some evidence to our conjecture.

\begin{thm}\label{1p4nnn}
  Let $F$ be a henselian (e.g. complete) discrete valuation field with residue field $k$. Let $\ell$ be a prime number different from $\car(k)$. Suppose that the following properties hold for every finite cyclic extension $L/k$  of degree $1$ or $\ell$:
  \begin{enumerate}
  \item (\emph{Rost $\ell$-divisibility}) $L$ is Rost $\ell$-divisible.
    \item ($H^3$-\emph{corestriction injectivity})  The corestriction map
  \[
  \Cor_{L/k}\,:\;\;H^3(L\,,\,\mathbb{Q}_{\ell}/\Z_{\ell}(2))\lra H^3(k\,,\,\mathbb{Q}_{\ell}/\Z_{\ell}(2))
  \]is injective.
  \end{enumerate}

  Then the field $F$ is Rost $\ell$-divisible.
\end{thm}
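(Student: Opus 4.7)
The plan is to reduce the theorem to cohomological computations over the residue field $k$ via the Kato--Witt residue exact sequence
\[
0 \lra H^n(k,\mu_\ell^{\otimes(n-1)}) \lra H^n(F,\mu_\ell^{\otimes(n-1)}) \xrightarrow{\partial} H^{n-1}(k,\mu_\ell^{\otimes(n-2)}) \lra 0,
\]
available since $\ell \neq \car(k)$ and split upon the choice of a uniformizer $\pi$ of $F$. Fix $\al \in \Br(F)$ with $\ell\al = 0$ and $\lambda \in \cR(\al)$; the goal is to show $\lambda \in \cS(\al)$. A prime-to-$\ell$ corestriction along $F(\mu_\ell)/F$ (degree dividing $\ell-1$, preserving hypotheses (1)--(2)) reduces to $\mu_\ell \subset k$, yielding a decomposition $\al = \beta + (\chi) \cup (\pi)$ with $\beta \in H^2(k,\mu_\ell^{\otimes 2})$ (inflated to $F$) and $\chi \in H^1(k,\mu_\ell)$. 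Let $L/k$ be the cyclic extension of degree $1$ or $\ell$ cut out by $\chi$, and $E/F$ its unramified lift with residue field $L$.

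Write $\lambda = u\pi^m$ with $u \in \mathcal{O}_F^\times$. Expanding $\al \cup (\lambda)$ via the Milnor--$K$ relation $\{\pi,\pi\} = \{\pi,-1\}$ and projecting onto the Witt decomposition $H^3(F,\mu_\ell^{\otimes 2}) \cong H^3(k,\mu_\ell^{\otimes 2}) \oplus H^2(k,\mu_\ell) \cup (\pi)$, the equation $\al \cup (\lambda) = 0$ splits into
\[
\text{(a)}\quad \ov{\beta} \cup (\ov{u}) = 0 \text{ in } H^3(k,\mu_\ell^{\otimes 2}), \qquad \text{(b)}\quad m\ov{\beta} = \chi \cup \bigl(\ov{u}\cdot(-1)^m\bigr) \text{ in } H^2(k,\mu_\ell).
\]
If $\ov{\beta}$ becomes zero in $\Br(L)$, then $\al_E = 0$, so $\al$ is split by the degree-$\ell$ extension $E/F$, has index dividing $\ell$, and is a symbol; Suslin's conjecture for $\al$ then follows from Example\;\ref{1p1nnn}(1) (Merkurjev--Suslin). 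Otherwise $\ov{\beta}_L \neq 0$; then (b) restricted to $L$ forces $\ell \mid m$, so $\pi^m \in (F^\times)^\ell \subseteq \cS(\al)$ and we may reduce to $\lambda = u$. Equation (b) then reads $\chi \cup (\ov{u}) = 0$, i.e., $\ov{u} \in N_{L/k}(L^\times)$, while (a) combined with hypothesis (1) applied to $k$ yields $\ov{u} \in \cS(\ov{\beta}) = \Nrd(\ov{D}_k^\times) \cdot (k^\times)^\ell$.

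The main task, and the crux of the proof, is to lift these $k$-level equalities to $\cS(\al) \subseteq F^\times$. In the unramified case $\chi = 0$, $D$ admits an Azumaya $\mathcal{O}_F$-order of residue $\ov{D}_k$; one lifts a Suslin decomposition of $\ov{u}$ term-by-term and absorbs the remaining unit in $1+\mathfrak{m}_F \subseteq (F^\times)^\ell$ via Hensel's lemma. In the ramified case $\chi \neq 0$, one passes to $E$, where $\al_E = \beta_E$ is unramified and the previous argument (applied over $E$ using hypothesis (1) for $L$) produces $\lambda \in \cS(\al_E) = \Nrd(D_E^\times)(E^\times)^\ell$. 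Direct descent via the norm principle $N_{E/F}(\Nrd(D_E^\times)) \subseteq \Nrd(D^\times)$ only yields $\lambda^\ell \in \cS(\al)$, which is vacuous since $F^\times/\cS(\al)$ is already $\ell$-torsion. Hypothesis (2) enters precisely here: combining $\Cor_{L/k}\circ\mathrm{Res}_{L/k} = \ell$ with $\ell$-torsion coefficients, its injectivity forces $\mathrm{Res}_{L/k}$ to vanish on $H^3(k,\mu_\ell^{\otimes 2})$. A Rost-type obstruction class in $H^3(F,\mu_\ell^{\otimes 2})$ attached to the defect of $u$ modulo $\cS(\al)$, traced through the Witt decomposition, has vanishing residue in $H^2(k,\mu_\ell)$ (from the $E$-level analysis) and vanishing unramified part (by the triviality of $\mathrm{Res}_{L/k}$), so the obstruction vanishes and $u \in \cS(\al)$ follows. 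Making this final bookkeeping precise---in particular, the compatibility of the Witt decomposition with corestriction and with the Rost invariant---is the technical heart of the argument.
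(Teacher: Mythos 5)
Your reductions are sound and track the paper's: the decomposition $\al=\beta+\chi\cup(\pi)$, the two residue equations (a), (b), the index-$\le\ell$ case via Merkurjev--Suslin, the reduction to $\lambda=u\in U_F$, and the unramified case via an Azumaya order plus Hensel's lemma all work. The gap is exactly where you place the "technical heart," namely the ramified case $\chi\neq 0$: after restricting to $E$ and obtaining $u\in\cS(\al_E)$, you correctly note that norming $u$ back down only gives $u^\ell\in\cS(\al)$, and then you appeal to "a Rost-type obstruction class attached to the defect of $u$ modulo $\cS(\al)$" whose vanishing would force $u\in\cS(\al)$. No such class is constructed, and none can be expected: the only cohomology class available is $\al\cup(u)$ itself, which is already known to vanish, and the claim that its vanishing implies $u\in\cS(\al)$ is precisely the statement being proved. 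Your observation that hypothesis (2) together with $\Cor_{L/k}\circ\mathrm{Res}_{L/k}=\ell$ forces $\mathrm{Res}_{L/k}$ to kill $H^3(k)[\ell]$ is correct but does not bear on membership of $u$ in $\cS(\al)$, so the descent from $E$ to $F$ is missing.

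The missing step, which is also where the paper actually uses hypothesis (2), starts from your equation (b): it tells you $\ov{u}\in N_{L/k}(L^{*})$, and since $[E:F]=\ell\neq\car(k)$, Hensel's lemma gives $U_F^1\subseteq N_{E/F}(U_E)$, so you may write $u=N_{E/F}(\mu)$ with $\mu\in U_E$. The projection formula yields $\Cor_{E/F}(\al_E\cup(\mu))=\al\cup(u)=0$; since $\al_E$ and $\mu$ are unramified this specializes to $\Cor_{L/k}(\ov{\al}_E\cup(\bar\mu))=0$ in $H^3(k)$, and injectivity of $\Cor_{L/k}$ gives $\ov{\al}_E\cup(\bar\mu)=0$. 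Rost $\ell$-divisibility of $L$ then gives $\bar\mu\in\cS(\ov{\al}_E)$, which lifts (by the same Azumaya/Hensel argument as in your unramified case) to $\mu\in\cS(\al_E)=\Nrd(\al_E)\cdot E^{*\ell}$, and norming down gives $u=N_{E/F}(\mu)\in\Nrd(\al)\cdot F^{*\ell}=\cS(\al)$ with no $\ell$-th power loss: the element pushed through the argument over $E$ must be the norm preimage $\mu$, not $u$ itself. A secondary issue: your opening reduction to $\mu_\ell\subseteq k$ asserts without justification that hypotheses (1)--(2) are preserved by the residue extension $k(\mu_\ell)/k$; this is unclear, but also unnecessary, since the decomposition $\al=\beta+\chi\cup(\pi)$ is available with $\mathbb{Q}_{\ell}/\Z_{\ell}(1)$-coefficients without any root-of-unity assumption.
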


The proof of Theorem\;\ref{1p4nnn} will be given in $\S$\ref{sec4} (page\;\pageref{proof1p4}). We will see in Remark\;\ref{6p3} that Condition 2 in this theorem cannot be dropped, although it is not a necessary condition for $F$ to be Rost $\ell$-divisible. {On the other hand, in Theorem\;\ref{4p9v2} we will give a description of the quotient group $\cR(\al)/\cS(\al)$ (the Rost kernel modulo the Suslin group) that is valid without the two assumptions of Theorem\;\ref{1p4nnn}.}

Theorem\;\ref{1p4nnn} also has a version in the case $\ell=\car(k)$. But more definitions and technical assumptions have to be introduced for a proper statement of that version. So we postpone it to $\S$\ref{sec5} (see Theorem\;\ref{5p3nnn}).

\begin{coro}\label{1p5nnn}
Let $F,\,k$ and $\ell$ be as in Theorem$\;\ref{1p4nnn}$. Assume that the residue field $k$ satisfies one of the following conditions:

\begin{enumerate}
  \item $k$ has cohomological $\ell$-dimension $\cd_{\ell}(k)\le 2$;
  \item $k$ is a $2$-local field;
  \item $k=k_0(\!(x)\!)(\!(y)\!)(\!(z)\!)$, where $k_0$ is an algebraically closed field (of characteristic $\neq \ell$).
\end{enumerate}

Then $F$ is Rost $\ell$-divisible.
\end{coro}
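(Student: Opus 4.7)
The plan is to apply Theorem\;\ref{1p4nnn} in each of the three cases. In every case I fix a finite cyclic extension $L/k$ of degree $1$ or $\ell$ and verify the two hypotheses of that theorem: (a) $L$ is Rost $\ell$-divisible, and (b) the corestriction $\Cor_{L/k}\colon H^3(L,\,\Q_\ell/\Z_\ell(2))\to H^3(k,\,\Q_\ell/\Z_\ell(2))$ is injective. Case\;(1) is essentially immediate: $\cd_\ell$ does not increase under finite separable extensions, so $\cd_\ell(L)\le 2$, and (a) follows from Example\;\ref{1p1nnn}(2), while (b) is automatic because both $H^3(k,\,\Q_\ell/\Z_\ell(2))$ and $H^3(L,\,\Q_\ell/\Z_\ell(2))$ vanish. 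For Case\;(2), any finite extension of a $2$-local field is again a $2$-local field, so (a) follows from Example\;\ref{1p1nnn}(4); for (b) I would invoke Kato's two-dimensional local class field theory, which supplies a canonical invariant isomorphism $\mathrm{inv}_k\colon H^3(k,\,\Q_\ell/\Z_\ell(2))\simto\Q_\ell/\Z_\ell$ (and likewise $\mathrm{inv}_L$ for $L$) together with the corestriction compatibility $\mathrm{inv}_k\circ\Cor_{L/k}=\mathrm{inv}_L$, in direct analogy with the classical $1$-local situation for Brauer groups. Hence $\Cor_{L/k}$ becomes the identity on $\Q_\ell/\Z_\ell$, and in particular is injective.

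For Case\;(3) I peel off one valuation at a time. The extension $L$ is itself a henselian DVF and its residue field $L_1$ is a finite extension of $k_0(\!(x)\!)(\!(y)\!)$; in particular $L_1$ is henselian with residue field a finite extension of $k_0(\!(x)\!)$, which has $\cd_\ell\le 1$, so $\cd_\ell(L_1)\le 2$. Applying Case\;(1) of the present corollary to $L$, viewed as a henselian DVF with residue field $L_1$, then yields (a). For (b), iterating the residue exact sequences attached to the three valuations of $k$ and of $L$, together with $\cd_\ell(k_0)=0$, produces canonical invariant isomorphisms $H^3(k,\,\Q_\ell/\Z_\ell(2))\simto\Q_\ell/\Z_\ell$ and $H^3(L,\,\Q_\ell/\Z_\ell(2))\simto\Q_\ell/\Z_\ell$; the corestriction is compatible with residues and with the norm map at the bottom, and because $k_0$ is algebraically closed every such norm is the identity on $\Q_\ell/\Z_\ell$, so $\Cor_{L/k}$ is an isomorphism.

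The main obstacle is step (b) in Cases\;(2) and\;(3): in both one needs a precise Kato-style invariant description of $H^3$ for these higher-local-type fields and an explicit verification that $\Cor_{L/k}$ respects it. In Case\;(3) this is essentially bookkeeping with iterated residue maps, where care is needed to track the ramification indices and choices of uniformizers at each of the three levels so that the residues for $L$ and for $k$ are matched up coherently when $L/k$ is ramified; in Case\;(2) one leans on the corestriction compatibility already built into Kato's two-dimensional class field theory.
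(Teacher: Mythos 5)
Your proposal is correct and follows essentially the same route as the paper: verify the two hypotheses of Theorem\;\ref{1p4nnn} for each cyclic $L/k$, using Example\;\ref{1p1nnn}\,(2) and vanishing of $H^3$ in case (1), Kato's two-dimensional local class field theory (his invariant map and corestriction compatibility, i.e.\ \cite[p.660, $\S$3.2, Prop.\;1]{KatoII80}) in case (2), and for case (3) the recursion through Case (1) for Rost $\ell$-divisibility together with the iterated-residue argument ``as for $2$-local fields'' for corestriction injectivity. You merely spell out details the paper leaves to the cited references, and your worry about ramification in case (3) is harmless since $\partial\circ\Cor=\Cor\circ\partial$ holds with no extra factors.
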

\begin{proof}
It is sufficient to show that $k$ has the two properties stated in Theorem\;\ref{1p5nnn}. The case with $\cd_{\ell}(k)\le 2$ is clear (see Example\;\ref{1p1nnn} (2) for the Rost divisibility). When $k$ is a 2-local field, the Rost divisibility condition is satisfied according to Example\;\ref{1p1nnn} (4) and the corestriction injectivity property holds by \cite[p.660, $\S$3.2, Prop.\;1]{KatoII80}. In the third case, the injectivity of corestriction can be shown in the same way as for 2-local fields. The Rost $\ell$-divisibility of $k$ follows from Case 1.
\end{proof}

In the above corollary the field $k$ can also be a number field or the field  $\R(\!(x)\!)$. In that situation  only the case $\ell=2$ needs to be treated. For a number field the $H^3$-corestriction injectivity can be deduced from a result of Tate on the cohomology of global fields (cf. \cite[Cor.\;8.3.12 (iii)]{NeukirchCohNumField08}). However, we feel that a simpler method in that case is to utilize the theory of quadratic forms (see Example\;\ref{2p2nnn} (4.b)).

We think that whether the field $F$ in Corollary\;\ref{1p5nnn} is Rost $\ell^{\infty}$-divisible is an interesting open problem. In general, we wonder if there is an approach to pass from the Rost $\ell$-divisibility to the $\ell^{\infty}$-divisibility.

A special case is treated in the following  theorem, which we will prove in the second half of $\S$\ref{sec4} (see \eqref{4p5nnn}--\eqref{4p8nnn}).

\begin{thm}\label{1p6nnn}
Let $\ell$ be a prime number and $n\in\N^*$. Let $k$ be a henselian discrete valuation field of residue characteristic different from $\ell$. Let $F$  be a henselian discrete valuation field with residue field $k$.

If $\cd_{\ell}(k)\le 2$ and $\mu_{\ell^n}\subseteq k$ (i.e., $k$ contains a primitive $\ell^n$-th root of unity), then $F$ is Rost $\ell^n$-divisible.
\end{thm}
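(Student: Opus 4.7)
The plan is to argue by induction on $n$.

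The base case $n=1$ is covered by Corollary\;\ref{1p5nnn}(1). Explicitly, the hypothesis $\cd_{\ell}(k)\le 2$ descends to every finite extension $L/k$, giving $H^3(L,\,\Q_{\ell}/\Z_{\ell}(2))=0$, so the corestriction injectivity condition of Theorem\;\ref{1p4nnn} is automatic, while Example\;\ref{1p1nnn}(2) supplies Rost $\ell$-divisibility of $L$. Hence Theorem\;\ref{1p4nnn} applies and $F$ is Rost $\ell$-divisible.

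For the inductive step, suppose $n\ge 2$ and the theorem is known for $n-1$; note that $\mu_{\ell^{n-1}}\subseteq\mu_{\ell^n}\subseteq k$, and since $\car(k)\neq\ell$, Hensel's lemma lifts $\mu_{\ell^n}$ into $F$. Let $\al\in\Br(F)$ have period dividing $\ell^n$ and $\lambda\in\cR(\al)$. Since $\ell\al\cup(\lambda)=\ell(\al\cup(\lambda))=0$ and $\ell\al$ has period dividing $\ell^{n-1}$, the inductive hypothesis gives $\lambda\in\cR(\ell\al)=\cS(\ell\al)$, so we may write
\[
\lambda=\prod_{j=1}^{n-1}N_j^{\ell^{j-1}}\cdot v^{\ell^{n-1}},\qquad N_j\in\Nrd(\ell^j\al),\;v\in F^*.
\]
The task is then to promote this into a decomposition exhibiting $\lambda\in\cS(\al)=\Nrd(\al)\cdot\prod_{j=1}^{n-1}\Nrd(\ell^j\al)^{\ell^j}\cdot(F^*)^{\ell^n}$. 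The identity $\cS(\al)=\Nrd(\al)\cdot\cS(\ell\al)^{\ell}$ makes it clear that the genuinely new information, namely $\al\cup(\lambda)=0$ (strictly stronger than $\ell\al\cup(\lambda)=0$), is precisely what is needed to raise each exponent from $\ell^{j-1}$ to $\ell^j$.

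The subroutine I expect to use repeatedly is the following easy consequence of the base case applied to $\beta:=\ell^{n-1}\al$ (of period $\ell$): any $u\in\cR(\beta)$ satisfies $u=M\cdot w^{\ell}$ with $M\in\Nrd(\beta)$, whence $u^{\ell^{n-1}}\in\Nrd(\ell^{n-1}\al)^{\ell^{n-1}}\cdot(F^*)^{\ell^n}\subseteq\cS(\al)$. Analogously, for intermediate $j$, Rost $\ell$-divisibility applied to the period-$\ell$ class $\ell^{n-1}\al$ over $F$, combined with the fact that (under the cohomological assumption on $k$) period equals index for $\ell$-primary classes over $F$ so that $(F^*)^{\ell^n}\subseteq\Nrd(\ell^j\al)^{\ell^j}$, should let us absorb powers of $F^*$ into the Suslin group without cost.

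The main obstacle is bridging between the two decompositions. Expanding the relation $\al\cup(\lambda)=0$ yields
\[
\sum_{j=1}^{n-1}\ell^{j-1}\bigl(\al\cup(N_j)\bigr)+\ell^{n-1}\bigl(\al\cup(v)\bigr)=0\text{ in }H^3(F,\Q_{\ell}/\Z_{\ell}(2)),
\]
where the $j$-th summand is only known to be $\ell$-torsion; in particular this does not immediately yield $\ell^{n-1}\al\cup(v)=0$, so the subroutine cannot be applied to $v$ directly. My plan is to iteratively correct the $N_j$'s from the top ($j=n-1$) downwards, at each step replacing $N_j^{\ell^{j-1}}$ modulo $\cS(\al)$ by an $\ell^j$-th power of a reduced norm (which is allowed because $N_j^{\ell^{j-1}\cdot\ell}=N_j^{\ell^j}\in\Nrd(\ell^j\al)^{\ell^j}\subseteq\cS(\al)$, so any failure is $\ell$-torsion) and rolling the resulting error into $v$; the hypothesis $\mu_{\ell^n}\subseteq F$ is essential here, both to trivialize the Tate twist in $H^3$ and to perform Kummer-theoretic rearrangements of $\ell^j$-th powers. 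The hardest step will be showing that after this cascade of corrections the residual copy of $v$ actually satisfies $\ell^{n-1}\al\cup(v)=0$, which is where the full henselian structure on $F$ (via the residue of $\al$ at the discrete valuation) should enter, allowing us to invoke Theorem\;\ref{1p4nnn} for cyclic degree-$\ell$ subfields and close the induction.
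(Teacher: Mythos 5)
Your base case is fine, but the inductive step is where the whole theorem lives, and it is not carried out: the proposal stops exactly at the point where the real difficulty begins. From $\lambda\in\cS(\ell\al)$ you get $\al\cup(\lambda)=\sum_{j}\ell^{j-1}\bigl(\al\cup(N_j)\bigr)+\ell^{n-1}\bigl(\al\cup(v)\bigr)=0$ in which every summand is merely $\ell$-torsion, and there is no mechanism in $H^3(F)$ to decouple these contributions; the observation that $N_j^{\ell^j}\in\cS(\al)$ does not let you replace $N_j^{\ell^{j-1}}$ modulo $\cS(\al)$ by anything useful, and the ``cascade of corrections'' is never specified. In particular the crucial claim that after the corrections one can arrange $\ell^{n-1}\al\cup(v)=0$ is exactly the statement you would need to prove, and nothing in the sketch produces it. There is also a structural reason the route cannot close as written: your argument uses only that $F$ is henselian with $\cd_{\ell}(k)\le 2$ plus the $n=1$ case, but the paper explicitly records that passing from Rost $\ell$-divisibility to Rost $\ell^{n}$-divisibility for such $F$ is an open problem in general. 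Theorem\;\ref{1p6nnn} is provable precisely because of the extra hypotheses you never exploit: $k$ is itself a henselian discrete valuation field (so its residue field $k_0$ has $\cd_{\ell}(k_0)\le 1$) and $\mu_{\ell^n}\subseteq k$.

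For comparison, the paper's proof does not induct on the period in your sense. After reducing to $\per(\al)=\ell^n$, $v_F(\lambda)=\ell^m$ with $0\le m<n$, $\ell^m\al'\neq 0$ and $\lambda\notin F^{*\ell}$, it inducts on $m$ and constructs an ``inductive pair'': a degree-$\ell$ extension $L/F$ and $\mu\in L^*$ with $\lambda=N_{L/F}(\mu)$, $v_L(\mu)=\ell^{m-1}$ and $\al_L\cup(\mu)=0$, so that $\mu\in\cS(\al_L)$ by induction and hence $\lambda\in\cS(\al)$. Producing such a pair is reduced to a norm problem over the residue field $k$, solved in Lemma\;\ref{4p8nnn} by explicit Kummer-theoretic and symbol-algebra computations that use $\mu_{\ell^n}\subseteq k$, the residue maps for the valuation on $k$, and the surjectivity of norm maps over the $\cd_{\ell}\le 1$ field $k_0$. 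Some ingredient of this kind, exploiting the second discrete valuation on $k$, is what your proposal is missing; without it the gap between $\cS(\ell\al)$ and $\cS(\al)$ cannot be bridged by formal manipulations of the decomposition alone.
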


For example, the field  $\C(x)(\!(y)\!)(\!(z)\!)$ is Rost divisible according to the above theorem.

\section{Rost kernel of biquaternion algebras}\label{sec2}

As a warmup, we extend Suslin's conjecture for biquaternion algebras to the characteristic 2 case.

\medskip

Recall that by a theorem of Albert, a biquaternion algebra over any field $F$ is the same as a central simple $F$-algebra of period 2 and degree 4 (see e.g. \cite[Thm.\;16.1]{KMRT}).

\begin{prop}\label{2p1}
Let $F$ be a field and let $ \alpha \in\Br(F)$ have period $2$ and index $4$.
Suppose $ \lambda\in F^{*} $ satisfies $ \alpha\cup(\lambda)=0 $ in $ H^{3}(F)$.

Then $\lambda\in (F^{*})^{2}\cdot\Nrd(\alpha)$. If moreover the $u$-invariant of $F$ is less than $12$ (meaning that every nonsingular quadratic form of dimension $12$ over $F$ is isotropic), then $\lambda\in \Nrd(\al)$.
\end{prop}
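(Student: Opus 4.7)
The plan is to reduce to characteristic 2 and then run a Hauptsatz-style argument in the characteristic-2 theory of quadratic forms. The case $\car(F)\neq 2$ is essentially contained in the argument of Knus-Lam-Shapiro-Tignol \cite{KnusLamShapiroTignol95} for the first assertion and in \cite{PreetiSoman15} for the $u$-invariant refinement, so I will concentrate on $\car(F)=2$.

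For the char-2 case, I would first invoke Albert's theorem (which is characteristic-free) to write $D=Q_1\otimes_F Q_2$ as a tensor product of two quaternion algebras $Q_i=[a_i,b_i)$, so that $\al$ is realized as a symbol in Kato's description of $H^2(F,\Z/2)$ via logarithmic differentials. Attached to $D$ is its Albert form $q_{\mathrm{Alb}}$, a 6-dimensional nonsingular quadratic form whose class in $I^2_q(F)/I^3_q(F)\simeq H^2(F,\Z/2)$ represents $\al$; here $I^n_q(F)$ denotes the natural filtration on the Witt group of nondegenerate quadratic forms in characteristic 2, via powers of the fundamental ideal of the bilinear Witt ring.

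Next, using Kato's isomorphism $I^3_q(F)/I^4_q(F)\simeq H^3(F,\Z/2)$ in characteristic 2 (Kato; cf. Aravire-Baeza), the hypothesis $\al\cup(\lambda)=0$ translates into the statement that the 12-dimensional quadratic form $\langle 1,\lambda\rangle\otimes q_{\mathrm{Alb}}$ lies in $I^4_q(F)$. The Arason-Pfister Hauptsatz in char 2 (Hoffmann; Aravire-Baeza) then forces any anisotropic form in $I^4_q(F)$ to have dimension at least $16$, so this 12-dim form must be hyperbolic. Witt cancellation yields the similarity $\lambda\cdot q_{\mathrm{Alb}}\simeq q_{\mathrm{Alb}}$, and the char-2 counterpart of Knus-Lam-Shapiro-Tignol identifying the similarity-factor group with $(F^*)^2\cdot\Nrd(D^*)$ gives $\lambda\in(F^*)^2\cdot\Nrd(\al)$. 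For the $u$-invariant refinement I would upgrade this by showing that the 12-dim isotropy hypothesis forces $(F^*)^2\subseteq\Nrd(D^*)$, e.g. by exhibiting a separable quadratic subfield $K\subseteq D$ whose norm $N_{K/F}$ becomes surjective under the strong isotropy (so that every square is already a value of $N_{K/F}$, hence a reduced norm).

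The main obstacle will be assembling the characteristic-2 quadratic-form machinery: the correct definition and cohomological description of the Albert form in char 2, Kato's isomorphism $I^n_q/I^{n+1}_q\simeq H^n(F,\Z/2)$, the Hauptsatz in char 2, and the char-2 analog of Knus-Lam-Shapiro-Tignol each require some care, since quadratic and bilinear forms diverge in this characteristic and Witt cancellation has some subtleties. The $u$-invariant upgrade from $(F^*)^2\cdot\Nrd(\al)$ to $\Nrd(\al)$ is an extra step not needed in the Hauptsatz part of the argument and may require a separate isotropy-based input.
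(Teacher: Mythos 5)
Your plan for the first assertion works and is close in spirit to the paper's, but it runs on heavier fuel: you invoke Kato's full isomorphism $e_3\colon I^3_q(F)/I^4_q(F)\simto H^3(F,\Z/2\Z(2))$ together with the characteristic-2 Arason--Pfister Hauptsatz to force hyperbolicity of the 12-dimensional form $\langle 1,-\lambda\rangle\otimes\phi$, then Witt cancellation and the similarity-factor computation. The paper gets by with less: writing $\phi\otimes\langle 1,-\lambda\rangle=[\![b,\,a,\,\lambda\rangle\!\rangle-[\![d,\,c,\,\lambda\rangle\!\rangle$ as a difference of two quadratic 3-fold Pfister forms, the vanishing of $e_3$ and the fact that $e_3$ separates Pfister forms (Kato's Prop.\;3, the statement \eqref{2p1p1}) already give an isometry of the two Pfister forms, hence $\lambda\cdot\phi\cong\phi$; both routes then conclude via \cite[(16.6)]{KMRT}, which is characteristic-free, that $G(\phi)=F^{*2}\cdot\Nrd(\al)$. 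So for the first claim your approach is viable, at the cost of verifying the full injectivity of $e_3$ modulo $I^4_q$ and the char-2 Hauptsatz rather than the Pfister-form statement alone.

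The genuine gap is in the $u$-invariant refinement. Your target $(F^*)^2\subseteq\Nrd(\al)$ is the right one, but the proposed mechanism fails on two counts. First, for a separable quadratic subfield $K\subset D$ one has $\Nrd_D(x)=N_{K/F}(x)^{2}$ for $x\in K$, and $K$ does not split $D$, so values of $N_{K/F}$ are \emph{not} reduced norms of $D$; the step ``every square is already a value of $N_{K/F}$, hence a reduced norm'' is exactly the unjustified jump (indeed $\rho^2=N_{K/F}(\rho)$ trivially, yet $\rho^2\in\Nrd(\al)$ is the whole point at issue). Second, surjectivity of $N_{K/F}$ cannot be extracted from the hypothesis $u(F)<12$: it amounts to universality of a binary norm form, which fails for instance over $2$-local fields, where $u=8<12$, biquaternion division algebras exist, and $\Nrd(\al)\subsetneq F^*$ because the Rost invariant of $\mathbf{SL}_1(D)$ is nontrivial there. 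The paper's argument for this step is different and does go through: for any $\rho\in F^*$ the 12-dimensional form $\langle 1,-\rho\rangle\otimes\phi$ is isotropic by hypothesis, so $\rho$ is a spinor norm of the Albert form $\phi$, and \cite[Prop.\;16.6]{KMRT} then yields $\rho^2\in\Nrd(\al)$; hence $F^{*2}\subseteq\Nrd(\al)$, which combined with the first assertion gives $\lambda\in\Nrd(\al)$. You would need to replace your quadratic-subfield argument by this (or an equivalent) spinor-norm argument.
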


As was mentioned in the introduction, the proof of the above result in characteristic different from 2 already exists in the literature. The first assertion can be found in   \cite{KnusLamShapiroTignol95} or \cite{Merkurjev95ProcSympos58}, and the second one follows from \cite[Thm.\;4.1]{PreetiSoman15} (see also our remarks in Example\;\ref{1p1nnn} (6)).

\medskip

To complete the proof of Prop.\;\ref{2p1}, we need to use basic facts about quadratic forms and cohomology theories in characteristic 2 which we now recall. The reader may consult \cite[Chapters\;1 and 2]{EKM08} and \cite{Kato82} for more details.

Let $F$ be a field of characteristic 2. For any $a,\,b\in F$, we denote by $[a,\,b]$ the binary quadratic form $(x,\,y)\mapsto ax^2+xy+by^2$. A \emph{quadratic $1$-fold Pfister form} over $F$ is a quadratic form of the type $[1,\,b]$ for some $b\in F$. For $n\ge 2$, a \emph{quadratic $n$-fold Pfister form} is a tensor product of a bilinear $(n-1)$-fold Pfister form with a quadratic 1-fold Pfister form, i.e., a form of the shape
\[
[\![b\,;\,a_1,\dotsc, a_{n-1}\rangle\!\rangle :=[1\,,\,b]\otimes\langle\!\langle a_1,\dotsc, a_{n-1}\rangle\!\rangle
\]
The group of Witt equivalence classes of nonsingular (even dimensional) quadratic forms over $F$ will be denoted by $I_q(F)$ or $I^1_q(F)$, and for each $n\ge 1$, $I_q^n(F)$ denotes the subgroup of $I_q(F)$ generated by scalar multiples of quadratic $n$-fold Pfister forms.

For a natural number $r\in\N$, we have the Kato--Milne cohomology group $H_2^{r+1}(F)=H^{r+1}(F\,,\,\Z/2\Z(r))$, which can be described using absolute differentials. (A brief review about these groups will be given in \eqref{5p1nnn}.) For each $n\ge 1$ there is a well defined group homomorphism $e_n: I^n_q(F)\lra H_2^n(F)$ with the property that (cf. \cite[p.237, Prop.\;3]{Kato82})
\begin{equation}\label{2p1p1}
  e_n(\varphi)=e_n(\psi) \iff \varphi\cong \psi
\end{equation}
for all quadratic $n$-fold Pfister forms $\varphi$ and $\psi$ over $F$.

\

\begin{proof}[Proof of Proposition$\;\ref{2p1}$ in characteristic $2$]
Let $[b,\,a)\otimes [d,\,c)$ be a biquaternion algebra representing the Brauer class $\al$, where $[b,\,a)$ denotes the quaternion $F$-algebra generated by two elements $x,\,y$ subject to the relations
\[
x^2-x=b\,,\;y^2=a\quad\text{and}\quad yx=(x+1)y\,.
\]Then the quadratic form $\phi:=[1\,,\,b+d]\bot a.[1,\,b]\bot c.[1,\,d]$ is an Albert form of $\al$ (cf. \cite[(16.4)]{KMRT}). Inside the group $I_q(F)$, we have (cf. \cite[Example\;7.23]{EKM08})
   \[
   \phi=[1,\,b]+a.[1,\,b]+[1,\,d]+c.[1,\,d]=[\![b,\,a\rangle\!\rangle -[\![d,\,c\rangle\!\rangle\;\in I^2_q(F)\,
  \]and for any $\lambda \in F^*$,
  \[
   \phi-\lambda. \phi= \phi\otimes \langle 1,\,-\lambda\rangle=[\![b,\,a\,,\,\lambda\rangle\!\rangle -[\![d,\,c\,,\lambda\rangle\!\rangle\;\in I^3_q(F)\,.
  \]The cohomological invariant $e_3: I^3_q(F)\to H^3(F,\,\Z/2\Z(2))$ sends $[\![b,\,a\,,\,\lambda\rangle\!\rangle -[\![d,\,c\,,\lambda\rangle\!\rangle$ to the cohomology class
  $\al\cup (\lambda)\in H^3(F,\,\Z/2\Z(2))$. When $\al\cup(\lambda)=0$, we can deduce from \eqref{2p1p1} and \cite[(16.6)]{KMRT} that
  \[
   \lambda\;\in\; G(\phi):=\{\rho\in F^*\,|\,\rho.\phi\cong \phi\}=F^{*2}\cdot\Nrd(\al)\;.
  \]This proves the first assertion.

  Now assume further that every nonsingular quadratic form over dimension 12 over $F$ is isotropic. Then for the Albert form $\phi$ and any $\rho\in F^*$, the form $\langle 1,\,-\rho\rangle\otimes\phi$ is isotropic and hence $\rho$ is a spinor norm for $\phi$. By \cite[Prop.\;16.6]{KMRT}, we have $\rho^2\in\Nrd(\al)$. This shows $F^{*2}\subseteq\Nrd(\al)$. So the second assertion follows.
\end{proof}

\begin{example}\label{2p2nnn}
  Prop.\;\ref{2p1} implies that a field $F$ is Rost 2-divisible if  every Brauer class in $\Br(F)[2]$ has index at most 4.  The following fields $F$ possess this property:

  \begin{enumerate}
    \item Any field $F$ of characteristic $2$ with $[F:F^2]\le 4$.

    This is a well known theorem of Albert (cf. \cite[Lemma\;9.1.7]{GilleSzamuely17}).
    \item $F=k_0(\!(x)\!)(\!(y)\!)(\!(z)\!)$ is an iterated Laurent series field in three variables over quasi-finite field $k_0$ of characteristic $2$.

Here by a \emph{quasi-finite} field we mean a perfect field whose absolute Galois is isomorphic to that of a finite field. That every Brauer class in $\Br(F)[2]$ is a biquaternion algebra is proved in \cite[Thm.\;3.3]{AravireJacob95}.
    \item A field extension $F$ of  transcendence degree 2 over any finite field (\cite{Lieblich15AnnMath}).
    \item A complete discrete valuation field $F$ whose residue field $k$ satisfies one of the following hypotheses:

    \begin{enumerate}
      \item $k$ has characteristic $2$ and $[k:k^2]\le 2$ (cf. \cite[Thm.\;2.7]{PS14invent}).
      \item $k$ has characteristic different from $2$ and every 2-torsion Brauer class over $k$ is (the class of) a quaternion algebra.

      This case follows from Witt's decomposition of the 2-torsion Brauer subgroup $\Br(F)[2]$ (see \eqref{4p1p3}).

      Concrete examples of $k$ include:
      \begin{itemize}
        \item[(i)]  a global field or a local field;

      \item[(ii)] any $C_2$-field;

      \item[(iii)] the field $\R(\!(x)\!)$;

      \item[(iv)] a one-variable function field over $\R$;

      \item[(v)] an iterated Laurent series field $k_0(\!(x)\!)(\!(y)\!)(\!(z)\!)$, where $k_0$ is  a quadratically closed field.
      \end{itemize}

      Case (i) is well known. (Notice however that the result here is covered by Example\;\ref{1p1nnn} (4) and (5.a) unless $k$ is a number field with real places.)

      For $C_2$-fields the assertion follows from an easy consideration of the isotropy of Albert forms of biquaterion algebras. For the field $k=\R(\!(x)\!)$, the group $\Br(k)[2]$ can be computed explicitly. The other cases were discussed in \cite[p.126 and Prop.\;3.12]{ElmanLam73Invent}.
    \end{enumerate}

    \item A one-variable function field $F$ over a complete discrete valuation field whose residue field $k$ is a $C_1$-field of characteristic $\neq 2$.

    Indeed, Artin proved in \cite[Thm.\;6.2]{Artin82BrauerSeveri} that period coincides with index for 2-primary torsion Brauer classes over any $C_2$-field. One can apply \cite[Cor.\;5.6]{HHK} or \cite[Thm.\;6.3]{Lieb07} to see that 2-torsion Brauer classes over $F$ have index at most $4$. The case with $k$ finite was first proved by Saltman \cite{Salt97}.
    \item A one-variable function field $F$ over a complete discrete valuation field whose residue field $k$ is a perfect field of characteristic $2$.

    This case is established in \cite[Thm.\;3.6]{PS14invent}.
    \item The field of fractions $F$ of a two-dimensional henselian excellent local domain of finite residue field of characteristic $\neq 2$ (\cite[Thm.\;3.4]{Hu11}).
  \end{enumerate}

Except possibly for Case 2, the cases (iii)--(v) in (4.b) and the case of real number fields in (4.b) (i), the fields $F$ in the above list have $u$-invariant $\le 8$, and hence the second conclusion of Prop.\;\ref{2p1} applies to these fields.

Indeed, Case 1 follows from \cite[p.338, Cor.\;1]{MammoneMoresiWadsworth91}. In Case 3, $F$ is a $C_3$-field. In Cases (4.a) and (4.b) (ii), one can combine \cite[p.338, Cor.\;1]{MammoneMoresiWadsworth91} with Springer's theorem (\cite[XI.6.2 (7)]{Lam} if $\car(k)\neq 2$; \cite[Thm.\;1.1]{Baeza82} and \cite[p.343, Thm.\;2]{MammoneMoresiWadsworth91} if $\car(k)=2$). Cases 5, 6 and 7 can be found in \cite[Cor.\;4.13]{HHK}, \cite[Thm.\;4.7]{PS14invent} and \cite[Thm.\;1.2]{Hu11} respectively.
\end{example}

Note that in the above list we have restricted to examples that do not depend on Theorem\;\ref{1p4nnn} or Corollary\;\ref{1p5nnn}.

\section{The Suslin group}\label{sec3}

In this section, we prove some general properties of the Suslin group of an arbitrary Brauer class and show that the general case can be reduced to the case of prime power degree classes.

\medskip

\newpara\label{3p1nnn} Let $F$ be a field and $\al\in\Br(F)$. By a well known result, called the \emph{norm principle for reduced norms} (cf. \cite[Prop.\;2.6.8]{GilleSzamuely17}),
\[
  \Nrd(\al)=\bigcup_{L/F}N_{L/F}(L^*)=\bigcup_{M/F}N_{M/F}(M^*)\,,
\]
where $L/F$ runs over finite separable extensions splitting $\al$ that can be $F$-embedded into the central division algebra $D_{\al}$ in the Brauer class $\al$, and $M/F$ runs over finite extensions that split  $\al$. As an easy consequence, for all $t\in\Z$ we have
\begin{equation}\label{3p1p1}
     \Nrd(\al)\subseteq\Nrd(t\al) \text{ and equality holds if } \gcd(t,\,\per(\al))=1\,.
\end{equation}
Moreover, for all $\al_1,\,\al_2\in \Br(F)$,
\begin{equation}\label{3p1p2}
  \gcd(\per(\al_1)\,,\,\per(\al_2))=1 \Longrightarrow \Nrd(\al_1+\al_2)=\Nrd(\al_1)\cap\Nrd(\al_2)\,.
\end{equation}
Indeed, a field extension $M/F$ splits $\al_1+\al_2$ if and only if it splits both $\al_1$ and $\al_2$, since the periods of $\al_1$ and $\al_2$ are coprime. So the norm principle shows that
\[
\Nrd(\al_1+\al_2)\subseteq\Nrd(\al_1)\cap \Nrd(\al_2)\,.
\]Conversely, suppose $\lambda\in \Nrd(\al_1)\cap\Nrd(\al_2)$. Letting $D_i$ be the central division $F$-algebra in the Brauer class $\al_i$ for $i=1,\,2$, by the norm principle we can find a subfield $L_i\subseteq D_i$ such that $\lambda\in N_{L_i/F}(L_i^*)$. Let $L=L_1\cdot L_2$ be the composite field (inside a fixed algebraic closure of $F$) and put $d_i=[L_i:F]$. Then we have $\gcd(d_1,\,d_2)=1$ since $d_i$ divides a power of $\per(\al_i)$. In particular, $[L:F]=d_1d_2$. Thus,
\[
\lambda^{d_2}\in N_{L_1/F}(L_1^{*d_2})\subseteq N_{L_1/F}\left(N_{L/L_1}(L^*)\right)\;\subseteq N_{L/F}(L^*)\,,
\]and similarly, $\lambda^{d_1}\in N_{L/F}(L^*)$. Since $\gcd(d_1,\,d_2)=1$, this implies $\lambda\in N_{L/F}(L^*)$. As $L=L_1L_2$ is clearly a splitting field of $\al_1+\al_2$, we get $\lambda\in \Nrd(\al_1+\al_2)$. This proves \eqref{3p1p2}.

\

\newpara\label{3p2nnn} {In \eqref{1p2nnn} we defined the Suslin group of a Brauer class of prime power period. Now we generalize this definition to arbitrary Brauer classes. For any $\al\in\Br(F)$, we define the  {\bf \emph{Suslin group}} $\cS(\al)$, which in \cite{Merkurjev95ProcSympos58} was denoted by $A(D)$, as follows:}
\begin{equation}\label{3p2p1}
  \begin{split}
    \cS(\al)=\prod^{\infty}_{i=1}\Nrd(i\al)^i&:=\text{ subgroup of $F^*$ generated by }\;\bigcup_{i\ge 1}\Nrd(i\al)^i\\
    &=\prod^{\per(\al)}_{i=1}\Nrd(i\al)^i\;.
  \end{split}
\end{equation}
In fact, writing $e=\per(\al)$ we have
\[
\begin{split}
  \cS(\al)&=\prod^e_{i=1}\Nrd(i\al)^i=\prod_{d\,|\,e}\prod_{\substack{1\le i\le e \\ \gcd(i,\,e)=d}}\Nrd(i\al)^i\\
  &=\prod_{d\,|\,e}\prod_{\substack{1\le j\le e/d \\ \gcd(j,\,e/d)=1}}\Nrd(jd\al)^{jd}\\
  &=\prod_{d\,|\,e}\prod_{\substack{1\le j\le e/d \\ \gcd(j,\,e/d)=1}}\Nrd(d\al)^{jd}\quad(\text{using }\eqref{3p1p1})\\
  &=\prod_{d\,|\,e}\Nrd(d\al)^d\,.
\end{split}
\]
Also, from the definition and \eqref{3p1p1} we see easily that for all $t\in\N^*$,
\begin{equation}\label{5p2p3}
   \cS(t\al)^t\subseteq\cS(\al)\subseteq \cS(t\al)\,,   \text{ and }\; \cS(\al)=\cS(t\al)\text{ if }\;\gcd(t,\,\per(\al))=1\,.
\end{equation}

We can prove the following analog of \eqref{3p1p2}:

\begin{lemma}\label{3p3nnn}
  For all $\al_1,\,\al_2\in\Br(F)$, if $\gcd(\per(\al_1),\,\per(\al_2))=1$, then $\cS(\al_1+\al_2)=\cS(\al_1)\cap \cS(\al_2)$.
\end{lemma}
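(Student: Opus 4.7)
Setting $e_i := \per(\al_i)$, we have $\per(\al_1+\al_2)=e_1 e_2$ by coprimality. The plan is to prove the two inclusions separately, working throughout with the equivalent presentation $\cS(\gamma)=\prod_{d\mid \per(\gamma)}\Nrd(d\gamma)^d$ derived right after \eqref{3p2p1}.

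For the easy inclusion $\cS(\al_1+\al_2)\subseteq \cS(\al_1)\cap\cS(\al_2)$, I would factor each divisor $d$ of $e_1 e_2$ uniquely as $d=d_1 d_2$ with $d_i\mid e_i$. The periods $\per(d\al_i)=e_i/d_i$ are then coprime, so \eqref{3p1p2} yields $\Nrd(d(\al_1+\al_2))=\Nrd(d\al_1)\cap \Nrd(d\al_2)$, and \eqref{3p1p1} (applied via $\gcd(d_{3-i},e_i/d_i)=1$) simplifies this further to $\Nrd(d_1\al_1)\cap \Nrd(d_2\al_2)$. Raising to the $d$-th power then lands inside $\Nrd(d_1\al_1)^{d_1}\cap \Nrd(d_2\al_2)^{d_2}\subseteq \cS(\al_1)\cap \cS(\al_2)$, which gives the inclusion after taking the subgroup generated by all such factors.

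The harder inclusion $\cS(\al_1)\cap\cS(\al_2)\subseteq \cS(\al_1+\al_2)$ is where the main idea enters. I would first fix $a,b\in\Z$ with $ae_1+be_2=1$ by B\'ezout; because $e_1\al_1=e_2\al_2=0$, this gives the identities $\al_2=ae_1(\al_1+\al_2)$ and $\al_1=be_2(\al_1+\al_2)$. For any $d_2\mid e_2$, the class $d_2 e_1(\al_1+\al_2)$ has period $e_2/d_2$, which is coprime to $a$ (since $\gcd(a,e_2)=1$), so \eqref{3p1p1} forces $\Nrd(d_2\al_2)=\Nrd(d_2 e_1(\al_1+\al_2))$. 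Raising to the $d_2 e_1$-th power places the corresponding generator of $\cS(\al_2)^{e_1}$ inside $\cS(\al_1+\al_2)$, and therefore $\cS(\al_2)^{e_1}\subseteq \cS(\al_1+\al_2)$; symmetrically $\cS(\al_1)^{e_2}\subseteq \cS(\al_1+\al_2)$. For $\lambda\in\cS(\al_1)\cap\cS(\al_2)$, I would then invoke B\'ezout one more time: $\lambda=(\lambda^a)^{e_1}(\lambda^b)^{e_2}$ with $\lambda^a\in \cS(\al_2)$ and $\lambda^b\in \cS(\al_1)$, placing both factors, and therefore $\lambda$, in $\cS(\al_1+\al_2)$.

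The main obstacle I expect is precisely this second direction: the naive unfolding of $\cS(\al_i)$ against $\cS(\al_1+\al_2)$ only delivers the weaker inclusion $\cS(\al_i)^{e_{3-i}}\subseteq\cS(\al_1+\al_2)$, never all of $\cS(\al_i)$. The B\'ezout identity $1=ae_1+be_2$ is what bridges the gap, letting one reassemble $\lambda$ itself as a product of an $e_1$-th power and an $e_2$-th power.
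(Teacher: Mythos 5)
Your proof is correct and follows essentially the same route as the paper: the first inclusion comes from \eqref{3p1p2} applied to the generators of the Suslin group, and the second rests on showing $\cS(\al_i)^{e_{3-i}}\subseteq\cS(\al_1+\al_2)$ and then recombining $\lambda$ via the B\'ezout identity $ae_1+be_2=1$. The only difference is cosmetic: the paper obtains that containment directly from the facts recorded in \eqref{5p2p3} (namely $\cS(\al_1)^{e_2}=\cS(e_2\al_1)^{e_2}=\cS(e_2\al)^{e_2}\subseteq\cS(\al)$), whereas you re-derive it by hand at the level of the generators $\Nrd(d\gamma)^d$.
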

\begin{proof}
  For every $i\ge 1$, we have $\gcd(\per(i\al_1),\,\per(i\al_2))=1$. So by \eqref{3p1p2}, $\Nrd(i\al_1+i\al_2)=\Nrd(i\al_1)\cap\Nrd(i\al_2)$ and hence
  \[
  \Nrd(i\al_1+i\al_2)^i\subseteq \Nrd(i\al_1)^i\cap\Nrd(i\al_2)^i\;.
  \]Together with \eqref{3p2p1} this proves $\cS(\al_1+\al_2)\subseteq\cS(\al_1)\cap \cS(\al_2)$.

  Conversely, suppose $\lambda\in \cS(\al_1)\cap\cS(\al_2)$. Write $e_i=\per(\al_i)$ and $\al=\al_1+\al_2$. Using \eqref{5p2p3} we find
  \[
  \lambda^{e_2}\in \cS(\al_1)^{e_2}=\cS(e_2\al_1)^{e_2}=\cS(e_2\al)^{e_2}\subseteq\cS(\al)
  \]and similarly, $\lambda^{e_1}\in \cS(\al)$. Since $\gcd(e_1,\,e_2)=1$, a standard argument yields $\lambda\in \cS(\al)=\cS(\al_1+\al_2)$. This proves the lemma.
\end{proof}

The result below is immediate from Lemma\;\ref{3p3nnn}.

\begin{prop}\label{3p4nnn}
  Let $F$ be a field and $N$ a positive integer. If $F$ is Rost $\ell^n$-divisible for every prime power $\ell^n$ that divides $N$, then for all $\al\in \Br(F)[N]$ we have
{$\cR(\al)=\cS(\al)$, i.e., the Rost kernel and the Suslin group}  of $\al$ coincide.

  Consequently, if $F$ is Rost divisible (i.e., Rost $\ell^{\infty}$-divisible for every prime $\ell$), then the Rost kernel and the {Suslin group} are the same for all $\al\in\Br(F)$.
\end{prop}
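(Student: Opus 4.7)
My plan is to reduce everything to the prime power case via the primary decomposition of the Brauer group. Any $\al\in\Br(F)[N]$ decomposes uniquely as $\al=\sum_{\ell\mid N}\al_\ell$, where $\al_\ell$ is the $\ell$-primary component of $\al$, with $\per(\al_\ell)$ dividing $\ell^{n_\ell}$ (the $\ell$-part of $N$); in particular the various $\per(\al_\ell)$ are pairwise coprime.

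On the Suslin-group side, iterating Lemma\;\ref{3p3nnn} (which handles two summands of coprime period) immediately gives
\[
\cS(\al)=\bigcap_{\ell\mid N}\cS(\al_\ell)\,.
\]

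For the Rost kernel of a Brauer class of arbitrary order one must specify the coefficient system: the natural choice is $H^3(F,\,\Q/\Z(2))=\bigoplus_{\ell}H^3(F,\,\Q_\ell/\Z_\ell(2))$, in which
\[
\al\cup(\lambda)=\sum_{\ell\mid N}\al_\ell\cup(\lambda)
\]
decomposes into its $\ell$-primary components. Since these components lie in distinct direct summands, $\al\cup(\lambda)=0$ if and only if $\al_\ell\cup(\lambda)=0$ for every $\ell\mid N$, whence
\[
\cR(\al)=\bigcap_{\ell\mid N}\cR(\al_\ell)\,.
\]

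Combining the two displays with the hypothesis of Rost $\ell^{n_\ell}$-divisibility, which yields $\cR(\al_\ell)=\cS(\al_\ell)$ term by term, produces $\cR(\al)=\cS(\al)$. The ``consequently'' clause then follows by applying this first assertion with $N=\per(\al)$ as $\al$ ranges over $\Br(F)$. There is no genuine obstacle here; the argument is essentially bookkeeping once the extended definition of $\cR(\al)$ via $\Q/\Z(2)$-coefficients is in place, and the only mildly subtle step is verifying that the $\ell$-primary pieces of the cup product sit in independent direct summands of $H^3(F,\,\Q/\Z(2))$, which is immediate from the decomposition of the coefficients.
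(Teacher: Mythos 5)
Your proposal is correct and matches the paper's intent: the paper states that the proposition is immediate from Lemma\;\ref{3p3nnn}, and your argument simply spells out the bookkeeping — primary decomposition of $\al$, $\cS(\al)=\bigcap_\ell\cS(\al_\ell)$ by iterating Lemma\;\ref{3p3nnn}, and $\cR(\al)=\bigcap_\ell\cR(\al_\ell)$ from the direct-sum decomposition of $H^3(F,\,\Q/\Z(2))$ into $\ell$-primary summands. No discrepancy with the paper's approach.
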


This proposition shows that our reformulation of Suslin's conjecture (cf. \eqref{1p2nnn}) is equivalent to the original one given in \cite[(24.6)]{Suslin85}.

\begin{remark}\label{3p5nnn}
{The Suslin group $\cS(\al)$ and the Rost kernel $\cR(\al)$} may be interpreted from a $K$-cohomological point of view.  See e.g. \cite[(1.6) and (1.10)]{Merkurjev95ProcSympos58}.
\end{remark}

\section{Proofs of main results}\label{sec4}

We prove our main theorems\;\ref{1p4nnn} and \ref{1p6nnn} in this section.

\medskip

\newpara\label{4p1nnn} Throughout this section we use the following notations:

\begin{enumerate}
  \item $F$: a henselian  discrete valuation field
  \item $v=v_F$: the normalized discrete valuation on $F$
  \item $k$: the residue field of $F$
  \item $\pi\in F$: a fixed uniformizer of $F$
  \item $\ell$: a prime number different from the characteristic of $k$
  \item $\al\in \Br(F)[\ell^{\infty}]$: a Brauer class over $F$ of $\ell$-power period
  \item $H^d(\cdot):=H^d(\cdot\,,\,\mathbb{Q}_{\ell}/\Z_{\ell}(d-1))$ for all $d\in\N^*$, the cohomology being the Galois cohomology.
  \item {For any finite extension $L$ of $F$, let $\mathcal{O}_L$ be the valuation ring of $L$ and $U_L$ be the group of units in $\mathcal{O}_L$}.
\end{enumerate}

An element $\chi_0 \in H^1(k)$ can be determined by a pair $(E_0/k,\,\ov{\sigma})$, where $E_0/k$ is the cyclic extension and $\ov{\sigma}$ is a  generator of the cyclic Galois group $\mathrm{Gal}(E_0/k)$. The correspondence between $\chi_0$ and $(E_0/k,\,\ov{\sigma})$ is established by requiring  that the continuous homomorphism $\chi_0:\mathrm{Gal}(k_s/k)\to \mathbb{Q}_{\ell}/\Z_{\ell}$ has kernel $\mathrm{Gal}(k_s/E)$, $k_s$ denoting a fixed separable closure of $k$, and that $\ov{\sigma}\in \mathrm{Gal}(E_0/k)$ is the generator which is mapped to the canonical generator of the cyclic group $\mathrm{Im}(\chi_0)$. When the role played by $\ov{\sigma}$ is not explicit in our arguments, we will simply write $\chi_0=(E_0/k)$.

By the \emph{canonical lifting} $\chi\in H^1(F)$ of $\chi_0$ we shall mean the image of $\chi_0$ under the inflation map $H^1(k)\to H^1(F)$. Explicitly, $\chi$ is defined by the pair $(E/F,\,\sigma)$ where $E/F$ is the unramified extension with residue field extension $E_0/k$ and $\sigma\in\mathrm{Gal}(E/F)$ is the generator corresponding to $\ov{\sigma}$ via the natural isomorphism $\mathrm{Gal}(E/F)\simto \mathrm{Gal}(E_0/k)$. Just as for $\chi_0$, we will write $\chi=(E/F)$ for short.

For any element $b\in F^*$, we write
\[
(E/F,\,b)=\chi\cup(b)
\]for the Brauer class given by the cup of $\chi\in H^1(F)$ and $(b)\in H^1(F,\,\mathbb{Q}_{\ell}/\Z_{\ell}(1))$.

For each $d\ge 1$, there is a well known homomorphism, called the \emph{residue map},
\[
\partial\;:\;\;H^{d+1}(F)\lra H^d(k)
\]whose definition may differ by a sign in different references (cf. \cite[$\S$1.2]{KatoII80}, \cite[$\S$7]{Serre03inGMScohinv}). Here we are mostly interested in the residue maps defined on $H^2$ and $H^3$. We make our choice of sign in a way that the following formulas hold: For all $\lambda,\,\mu\in F^*$ and $\chi_0\in H^1(k)$ with canonical lift $\chi\in H^1(F)$, we have
\begin{equation}\label{4p1p1}
  \partial(\chi\cup(\lambda))=v(\lambda).\chi_0\;\in\;H^1(k)
\end{equation}
and
\begin{equation}\label{4p1p2}
\partial(\chi\cup(\lambda)\cup(\mu))=\chi_0\cup (-1)^{v(\lambda)v(\mu)}\ov{\lambda^{v(\mu)}\mu^{-v(\lambda)}}\;\in\;H^2(k)\,.
\end{equation}

Moreover, we have an exact sequence
\begin{equation}\label{4p1p3}
0\lra \Br(k)[\ell^{\infty}]=H^2(k) \overset{\iota}{\lra} \Br(F)[\ell^{\infty}]=H^2(F)\overset{\partial}{\lra} H^1(k)\lra 0\,,
\end{equation}for which the choice of the uniformizer $\pi$ determines a splitting
\[
H^1(k)\lra H^2(F)\,;\,\quad \chi_0\longmapsto \chi\cup (\pi)\;.
\]The map $\iota$ can be viewed as the inflation map $H^2(k)\to H^2(F)$, or the composition of the natural map $\Br(\mathcal{O}_F)[\ell^{\infty}]\to \Br(F)[\ell^{\infty}]$ with the inverse of the natural isomorphism $\Br(\mathcal{O}_F)[\ell^{\infty}]\cong \Br(k)[\ell^{\infty}]$, where $\mathcal{O}_F$ is the valuation ring of $F$. A Brauer class in the kernel of the residue map $\partial: H^2(F)\to H^1(k)$ is
 called \emph{unramified}.

For $\al\in \Br(F)[\ell^{\infty}]$, we may write
\begin{equation}\label{4p1p4}
  \al=\al'+(E/F\,,\,\pi)\;\in\;\Br(F)\quad\text{ with }\;\al'\in \Br(F) \text{ unramified}
\end{equation}where $(E/F)=\chi$ is the canonical lifting of $\chi_0:=\partial(\al)\in H^1(k)$,

For any $\lambda\in F^*$, if we write
\begin{equation}\label{4p1p5}
  \lambda=\theta.(-\pi)^r\quad \text{with }\; r\in\Z \text{ and } \theta \text{ a unit for the valuation of } F\,
\end{equation}then  computation of residues shows that (cf. \cite[Lemma\;4.7]{ParimalaPreetiSuresh2018})
\begin{equation}\label{4p1p6}
\begin{split}
  &\al\cup(\lambda)\;\in\;\ker\left(\partial: H^3(F)\to H^2(k)\right)\\
\iff & r\al'=(E/F\,,\,\theta)\\
\iff &r\al=(E/F\,,\,(-1)^r\lambda)\;.
\end{split}
\end{equation}

\begin{lemma}\label{4p2nnn}
  With notation as above, if $r=v_F(\lambda)$ is coprime to $\ell$, then
  \[
  \partial(\al\cup(\lambda))=0 \Longrightarrow \lambda\in \Nrd(\al)\,.
  \]
\end{lemma}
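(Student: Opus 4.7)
The plan is to identify $r\al$ explicitly with a cyclic algebra extracted from $\chi$ and $\lambda$ via formula \eqref{4p1p6}, and then transfer back and forth between $\Nrd(\al)$ and $\Nrd(r\al)$ using the coprimality of $r$ with $\per(\al)$.

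First, since $r=v_F(\lambda)$, formula \eqref{4p1p6} rewrites the hypothesis $\partial(\al\cup(\lambda))=0$ as the identity
\[
r\al \;=\; (E/F,\,(-1)^r\lambda)\quad\text{in }\Br(F),
\]
where $\chi=(E/F)$ is the canonical unramified lift of $\chi_0=\partial(\al)$ and $n:=[E:F]$ is a power of $\ell$ (being the order of an $\ell$-primary character).

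Next, because $\gcd(r,\per(\al))=1$, formula \eqref{3p1p1} gives $\Nrd(\al)=\Nrd(r\al)$, so it suffices to show $\lambda\in\Nrd(r\al)$. The class $r\al$ is represented by the cyclic algebra $A=(E/F,(-1)^r\lambda)$ of degree $n$; the standard generator $z$ with $z^n=(-1)^r\lambda$ has reduced characteristic polynomial $T^n-(-1)^r\lambda$, and therefore reduced norm $(-1)^{n+r+1}\lambda$. This element lies in $\Nrd(A)=\Nrd(r\al)=\Nrd(\al)$.

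It remains to absorb the sign $(-1)^{n+r+1}$. The extension $E$ splits $\al$: the period of $\al|_E$ divides both $\per(\al)=\ell^s$ and $r$, which are coprime, so $\al|_E=0$. By the norm principle recalled in \eqref{3p1nnn}, $N_{E/F}(E^*)\subseteq\Nrd(\al)$. If $\ell$ is odd then $n$ is odd, so $-1=N_{E/F}(-1)\in\Nrd(\al)$ and the sign is harmless. If $\ell=2$ then $r$ is odd; either $n=1$, in which case $r\al=0$ forces $\al=0$ and the claim is trivial, or $n\ge 2$ is even, in which case $n+r+1$ is already even and no sign correction is needed. In every case $\lambda\in\Nrd(\al)$.

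The only real obstacle is the sign bookkeeping in the final step, which forces a brief parity case split on $\ell$; otherwise the proof is a direct combination of the ready-made tools \eqref{4p1p6}, \eqref{3p1p1}, and the norm principle for reduced norms.
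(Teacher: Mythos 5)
Your proof is correct, and it takes a genuinely different (though related) route from the paper's. The paper first uses a B\'ezout manipulation with the index $\ell^n=\ind(\al)$ to replace $\lambda$ by $\lambda^s\pi^{\ell^nc}$ and reduce to $v_F(\lambda)=1$; then \eqref{4p1p6} gives $\al=(E/F,-\lambda)$, the polynomial $T^{\ell^n}+\lambda$ is Eisenstein, so $L=F(\sqrt[\ell^n]{-\lambda})$ is an honest degree-$\ell^n$ field extension splitting $\al$, and $\lambda$ is, up to a sign that is itself a norm from $L$, equal to $N_{L/F}(\sqrt[\ell^n]{-\lambda})$, hence lies in $\Nrd(\al)$ by the norm principle of \eqref{3p1nnn}. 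You instead keep $r$ general, pass to $r\al$ via $\Nrd(\al)=\Nrd(r\al)$ from \eqref{3p1p1}, compute the reduced norm of the standard generator $z$ of the cyclic algebra $A=(E/F,(-1)^r\lambda)$, and absorb the sign using $\al_E=0$ together with a parity case split; all of these steps are sound (in particular your argument that $\per(\al_E)$ divides both $r$ and $\per(\al)$, hence is $1$, is exactly right). Two small remarks. First, you implicitly use that $\Nrd(A^*)=\Nrd(r\al)$ even though $A$ need not be a division algebra, i.e.\ that $\Nrd(M_m(D)^*)=\Nrd(D^*)$; this is standard (elementary matrices have reduced norm $1$), but it is not literally contained in the norm principle as recalled in \eqref{3p1nnn}, which is phrased via splitting fields of the division algebra, so it deserves a word. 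Second, you could bypass this entirely: since $v_F((-1)^r\lambda)=r$ is prime to the $\ell$-power $n=[E:F]$, the polynomial $T^n-(-1)^r\lambda$ is irreducible over the henselian field $F$, so $F[z]$ is a field of degree $n$ which splits $r\al$ (there $(-1)^r\lambda$ is an $n$-th power and $n\chi=0$) and hence splits $\al$ by coprimality; then your computation is just the field norm $N_{F(z)/F}(z)=(-1)^{n+r+1}\lambda$ and the norm principle applies directly, which is essentially the paper's argument without the normalization to $r=1$. What your version buys is avoiding the B\'ezout reduction; what the paper's version buys is staying strictly within its stated toolkit by always working with a genuine splitting field.
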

\begin{proof}
  Let $\ell^n=\mathrm{ind}(\al)$ be the index of $\al$. There exist integers $s,\,c\in \Z$ such that $rs+\ell^nc=1$. Replacing $\lambda$ with $\lambda^s\pi^{\ell^nc}$, we may assume $r=v_F(\lambda)=1$. Then by \eqref{4p1p6},
  \[
       \partial(\al\cup(\lambda))=0  \Longrightarrow \al=(E/F\,,\,-\lambda)\Longrightarrow \al \text{ is split by } L:=F\left(\sqrt[\ell^n]{-\lambda}\right)\,.
  \]Hence,
  \[
  (-1)^{\ell^n}\lambda=N_{L/F}\left(\sqrt[\ell^n]{-\lambda}\right)\;\in\; N_{L/F}(L^*)\subseteq\Nrd(\al)\,,
  \]and the lemma is thus proved.
\end{proof}

\begin{coro}\label{4p3v2}
  With notation as in $\eqref{4p1nnn}$, if $\per(\al)=\ell$, then
  \[
  \cR(\al)=\cS(\al)\cdot (\cR(\al)\cap U_F)\quad\text{and}\quad \frac{\cR(\al)\cap U_F}{\cS(\al)\cap U_F}\cong \frac{\cR(\al)}{\cS(\al)}\,.
  \]
\end{coro}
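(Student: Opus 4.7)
The plan is to reduce the decomposition $\cR(\al)=\cS(\al)\cdot (\cR(\al)\cap U_F)$ to a dichotomy on the valuation $r=v_F(\lambda)$, using Lemma\;\ref{4p2nnn} in the case $\ell\nmid r$ and the inclusion $(F^*)^\ell\subseteq \cS(\al)$ in the case $\ell\mid r$. The isomorphism of quotients will then be formal.

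First I would fix an arbitrary $\lambda\in \cR(\al)$ and write $\lambda=\theta\cdot \pi^r$ with $\theta\in U_F$ and $r=v_F(\lambda)\in\Z$. Since $\per(\al)=\ell$, the Suslin group takes the simple form $\cS(\al)=\Nrd(\al)\cdot (F^*)^\ell$, so in particular $(F^*)^\ell\subseteq \cS(\al)\subseteq \cR(\al)$. The argument then splits into two cases. If $\gcd(r,\ell)=1$, then $\al\cup(\lambda)=0$ implies $\partial(\al\cup(\lambda))=0$, so Lemma\;\ref{4p2nnn} gives $\lambda\in\Nrd(\al)\subseteq \cS(\al)$; writing $\lambda=\lambda\cdot 1$ puts $\lambda$ in $\cS(\al)\cdot (\cR(\al)\cap U_F)$. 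If instead $\ell\mid r$, write $r=\ell s$, so $\lambda=\theta\cdot(\pi^s)^\ell$ with $(\pi^s)^\ell\in (F^*)^\ell\subseteq \cS(\al)\subseteq \cR(\al)$; since $\lambda\in\cR(\al)$, we conclude $\theta\in\cR(\al)\cap U_F$, and again $\lambda\in\cS(\al)\cdot (\cR(\al)\cap U_F)$. The reverse containment is automatic because $\cS(\al)\subseteq\cR(\al)$, which proves the first identity.

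For the second assertion, consider the composite map
\[
\cR(\al)\cap U_F\;\hookrightarrow\;\cR(\al)\;\twoheadrightarrow\;\cR(\al)/\cS(\al)\,.
\]
The first identity we just proved says exactly that every coset in $\cR(\al)/\cS(\al)$ has a representative in $\cR(\al)\cap U_F$, so this composite is surjective. Its kernel is $(\cR(\al)\cap U_F)\cap \cS(\al)$, and since $\cS(\al)\subseteq\cR(\al)$ this kernel equals $\cS(\al)\cap U_F$. The isomorphism theorem then yields
\[
\frac{\cR(\al)\cap U_F}{\cS(\al)\cap U_F}\;\simto\;\frac{\cR(\al)}{\cS(\al)}\,,
\]
as desired.

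There is no real obstacle here — the corollary is essentially a bookkeeping consequence of Lemma\;\ref{4p2nnn} together with the elementary fact that, because $\per(\al)=\ell$, any $\ell$-th power of a uniformizer already lies in $\cS(\al)$. The only point that needs a moment's care is observing that the condition $\per(\al)=\ell$ (rather than $\ell^n$) is what makes $(F^*)^\ell\subseteq\cS(\al)$, which is why the statement is restricted to the period-$\ell$ case.
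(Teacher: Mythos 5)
Your proof is correct and follows essentially the same route as the paper: the dichotomy on $r=v_F(\lambda)$, using Lemma\;\ref{4p2nnn} when $\gcd(r,\ell)=1$ and the inclusion $(F^*)^{\ell}\subseteq\cS(\al)$ (valid since $\per(\al)=\ell$) when $\ell\mid r$, after which the quotient isomorphism is the standard formal argument. The only cosmetic difference is that the paper writes $\lambda=\theta\cdot(-\pi)^r$ rather than $\theta\cdot\pi^r$, which changes nothing.
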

\begin{proof}
  Clearly, it is sufficient to prove $\cR(\al)\subseteq\cS(\al)\cdot(\cR(\al)\cap U_F)$.

  Suppose $\lambda\in \cR(\al)$ and write $r=v_F(\lambda)$. If $r$ is coprime to $\ell$, then by Lemma\;\ref{4p2nnn} we have $\lambda\in \Nrd(\al)\subseteq \cS(\al)$. Otherwise $r$ is a multiple of $\ell=\per(\al)$.  In particular, this implies $\al\cup ((-\pi)^r)=0$. Using the notation of \eqref{4p1p5}, we get $\al\cup (\theta)=0$, i.e., $\theta\in \cR(\al)\cap U_F$. Hence
  \[
  \lambda=(-\pi)^r.\theta\;\in\;(F^*)^{\ell}\cdot(\cR(\al)\cap U_F)\subseteq \cS(\al)\cdot (\cR(\al)\cap U_F)
  \]as desired.
\end{proof}

\begin{lemma}\label{4p3nnn}
  With notation as in $\eqref{4p1p4}$, let $\ov{\al}'\in\Br(k)$ be the canonical image of the unramified class $\al'$.

  \begin{enumerate}

    \item For the unramified Brauer class $\al'$, we have
    \begin{equation}\label{4p4p1v2}
      \cR(\al')=(\cR(\al')\cap U_F)\cdot (F^*)^{\per(\al')}\;,\quad \cS(\al')=(\cS(\al')\cap U_F)\cdot (F^*)^{\per(\al')}
    \end{equation}
    and
   \begin{equation}\label{4p4p2v2}
    \cR(\al')\cap U_F=\{a\in U_F\,|\,\ov{a}\in \cR(\ov{\al}')\}\;,\;\cS(\al')\cap U_F=\{a\in U_F\,|\,\ov{a}\in \cS(\ov{\al}')\}\,.
    \end{equation}
    \item  If $\ell^n=\per(\al)$ and  $k$ is Rost $\ell^n$-divisible, then $\cR(\al')=\cS(\al')$.
    \item  Letting $E_0$ denote the residue field of $E$, we have
    \begin{equation}\label{4p4p3v2}
    \cR(\al)\cap U_F=\cR(\al')\cap N_{E/F}(U_E)=\{a\in U_F\,|\,\bar a\in \cR(\ov{\al}')\cap N_{E_0/k}(E_0^*)\}\,.
    \end{equation}
  \end{enumerate}
\end{lemma}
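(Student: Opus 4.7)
The core tool will be the uniformizer-induced decomposition
\[ H^3(F) \;\cong\; H^3(k)\oplus H^2(k), \]
valid since $\car(k)\neq\ell$, together with the residue formulas \eqref{4p1p1}--\eqref{4p1p2} and the following fact: an unramified central division $F$-algebra $D$ of class $\al'$ admits a unique Azumaya $\mathcal{O}_F$-order $\mathcal{A}$ whose residue $\ov{\mathcal{A}}$ is the central division $k$-algebra of class $\ov{\al}'$, and the reduction $\mathcal{A}^{\times}\twoheadrightarrow\ov{\mathcal{A}}^{\times}$ is surjective and compatible with reduced norms.

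For (1), the first equation of \eqref{4p4p1v2} follows from the specialization $\partial(\al'\cup(\lambda))=v(\lambda)\ov{\al}'$ of \eqref{4p1p2} to unramified $\al'$: if $\lambda\in\cR(\al')$ and $e=\per(\al')$, then $e\mid v(\lambda)$, so writing $\lambda=\theta(-\pi)^{me}$ with $\theta\in U_F$ one gets $\theta\in\cR(\al')\cap U_F$ using $(-\pi)^{me}\in(F^*)^e\subseteq\cR(\al')$. The analogous decomposition of $\cS(\al')$ rests on the observation that the valuations of elements of $\Nrd(i\al')^i$ lie in $i\,\ind(i\al')\,\Z\subseteq e\Z$, together with $(F^*)^e\subseteq\cS(\al')$ (since $e\al'=0$). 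The first identity of \eqref{4p4p2v2} comes from the fact that $\al'\cup(a)$ for $a\in U_F$ is unramified in $H^3(F)$ with image $\ov{\al}'\cup(\bar a)$ under the canonical identification of the unramified subgroup with $H^3(k)$. The second identity additionally requires that reduction induce a surjection $\Nrd(\al')\cap U_F\twoheadrightarrow\Nrd(\ov{\al}')$ via $\mathcal{A}^{\times}\twoheadrightarrow\ov{\mathcal{A}}^{\times}$, with kernel $1+\pi\mathcal{O}_F$ that is $\ell$-divisible by Hensel's lemma and hence contained in $(F^*)^e\subseteq\cS(\al')$.

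Part (2) is then immediate: since $\per(\al')\mid\ell^n$, Rost $\ell^n$-divisibility of $k$ yields $\cR(\ov{\al}')=\cS(\ov{\al}')$, and \eqref{4p4p1v2}--\eqref{4p4p2v2} combine to give $\cR(\al')=\cS(\al')$. For (3), I expand using \eqref{4p1p4} and anticommutativity of cup on $H^1\times H^1$:
\[ \al\cup(a) \;=\; \al'\cup(a)\;-\;(\chi\cup(a))\cup(\pi). \]
Both $\al'\cup(a)$ and $\chi\cup(a)$ are unramified (units cupped with unramified classes) with reductions $\ov{\al}'\cup(\bar a)$ and $\chi_0\cup(\bar a)$, so under the decomposition of $H^3(F)$ the vanishing of $\al\cup(a)$ translates to the simultaneous vanishing of both reductions, i.e., $\bar a\in\cR(\ov{\al}')\cap N_{E_0/k}(E_0^*)$ (the second condition being the classical interpretation of $\chi_0\cup(\bar a)=0$). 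Since $E/F$ is unramified of degree coprime to $\car(k)$, Hensel's lemma gives $1+\pi\mathcal{O}_F\subseteq N_{E/F}(U_E)$, whence $\{a\in U_F:\bar a\in N_{E_0/k}(E_0^*)\}=N_{E/F}(U_E)$; combining with \eqref{4p4p2v2} yields \eqref{4p4p3v2}.

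The main technical obstacle will be the bookkeeping for the second identity in \eqref{4p4p2v2}, where one must transfer the generation of $\cS(\ov{\al}')$ in $k^*$ back to $\cS(\al')\cap U_F$ in $U_F$: this requires both the surjectivity of reduction on $\Nrd\cap U_F$ (via the Azumaya orders of the unramified division algebras representing each $i\al'$) and a Hensel-lemma argument to absorb the kernel $1+\pi\mathcal{O}_F$ into $(F^*)^{\per(\al')}\subseteq\cS(\al')$.
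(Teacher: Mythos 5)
Most of your outline is correct, and in places it takes a mildly different but workable route from the paper (you control valuations of reduced norms via the value groups of the inertial division algebras and lift reduced norms through Azumaya orders, where the paper uses the norm principle and norms from unramified splitting fields; your treatment of \eqref{4p4p1v2}, of the first identity in \eqref{4p4p2v2}, and of part (3) is essentially equivalent to the paper's). However, there is a genuine gap at exactly the spot you call the ``main technical obstacle'': for the second identity in \eqref{4p4p2v2} you only treat the inclusion $\{a\in U_F\,|\,\ov{a}\in\cS(\ov{\al}')\}\subseteq\cS(\al')\cap U_F$ (lift generators of $\cS(\ov{\al}')$ through $\cA^{\times}\twoheadrightarrow\ov{\cA}^{\times}$ and absorb $U_F^1$ by Hensel). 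The reverse inclusion $\cS(\al')\cap U_F\subseteq\{a\in U_F\,|\,\ov{a}\in\cS(\ov{\al}')\}$ is nowhere addressed, and it does not follow from your surjectivity statement: an element of $\cS(\al')\cap U_F$ is a product $\prod_i n_i^{\,i}$ with $n_i\in\Nrd(i\al')$, and the individual factors need not be units, so they cannot simply be ``reduced''; your reduction map is only defined on unit reduced norms. This is precisely the direction for which the paper inserts the remark that, for an unramified class, the maximal unramified subextension of any $\ell$-power-degree splitting field already splits it, so that each factor can be rewritten as a unit norm times a power of $\pi$, with the $\pi$-exponents cancelling because $v_F(a)=0$.

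The gap is fillable inside your own framework, using the same valuation-theoretic fact you already invoke: since each $i\al'$ is inertial, $\pi$ remains a uniformizer of the valuation ring (which is the Azumaya order $\cA_i$) of the division algebra of class $i\al'$, so every element of $\Nrd(i\al')$ equals $\pi^{\ind(i\al')m}$ times the reduced norm of a unit of $\cA_i$; writing $a=\prod_i n_i^{\,i}$ in this form, the condition $v_F(a)=0$ forces the total $\pi$-exponent to vanish, and reducing the remaining unit factors gives $\ov{a}\in\prod_i\Nrd(i\ov{\al}')^{\,i}=\cS(\ov{\al}')$. As written, though, one of the two inclusions asserted in \eqref{4p4p2v2} is simply missing, and it is the one the paper later relies on (e.g. in the proof of Proposition\;\ref{4p6v2} in the case $E=F$), so it must be supplied explicitly.
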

\begin{proof}
(1) Let $\ell^n=\per(\al')$ and $\lambda\in F^*$. If $\lambda\in \cR(\al')$, by computing the residue of the cohomology class $\al'\cup(\lambda)$ we see that $r=v_F(\lambda)$ is a multiple of $\ell^n=\per(\al')$.  Thus, using the notation of \eqref{4p1p5}, we get
  \[
  \al'\cup(\theta)=\al'\cup(\theta.(-\pi)^r)=\al'\cup(\lambda)=0\,.
  \]This means $\theta\in \cR(\al')\cup U_F$. The first equality in \eqref{4p4p1v2} is thus proved. Since $(F^*)^{\ell^n}$ is contained in $\cS(\al')$ by the definition of the Suslin group, this argument also proves the other equality in \eqref{4p4p1v2}.

For any $a\in U_F$, the cohomology class $\al'\cup(a)$ is unramified. So its specialization $\ov{\al}'\cup (\ov{a})$ vanishes in $H^3(k)$ if and only if $\al'\cup (a)$ vanishes in $H^3(F)$. This gives the first equality in  \eqref{4p4p2v2}.

To show the second equality in \eqref{4p4p2v2}, let us suppose $\theta\in U_F$.

If $\ov{\theta}\in \cS(\ov{\al}')$, we have
  \[
  \ov{\theta}\;\in\; \prod^n_{i=0}\Nrd(\ell^{n-i}\ov{\al}')^{\ell^{n-i}}\;.
  \]Thus, there exist finite (separable) splitting fields $l_i/k$ of $\ell^{n-i}\ov{\al}'$ for $i=0,\,1,\dotsc, n$ and elements $\bar u_i\in l_i^*$ such that
  \[
  \bar \theta=\prod^n_{i=0}N_{l_i/k}(\bar u_i)^{\ell^{n-i}}\;.
  \]

   Let $L_i/F$ be the unramified extension with residue field $l_i/k$. Then $\ell^{n-i}\al'_{L_i}=0\in \Br(L_i)$ since $\ell^{n-i}\ov{\al}'_{l_i}=0$. This implies that $N_{L_i/F}(L_i^*)\subseteq \Nrd(\ell^{n-i}\al')$. Lift $\bar u_i\in l_i^*$ to a unit $u\in L_i^*$. Then
  \[
  \theta^{-1}.\prod^n_{i=0}N_{L_i/F}(u_i)^{\ell^{n-i}}\;\subseteq\;U^1_F:=\{a\in F^*\,|\,v_F(1-a)\ge 1\}\,.
  \]Since $U_F^1\subseteq (F^*)^{\ell^n}=\Nrd(\ell^n\al')^{\ell^n}$ by Hensel's lemma, it follows that
  \[
  \theta\in \prod^n_{i=0}\Nrd(\ell^{n-i}\al')^{\ell^{n-i}}=\cS(\al')\;.
  \]

 Now suppose $\theta\in \cS(\al')\cap U_F$. To finish the proof, we must  show $\ov{\theta}\in\cS(\ov{\al}')$.

 Notice that for any unramified Brauer class $\beta\in\Br(F)[\ell^{\infty}]$, with canonical image $\ov{\beta}\in \Br(k)$, if $M/F$ is a splitting field of $\beta$ of $\ell$-power degree, then the maximal unramified subextension $M_0$ of $M$ also splits $\beta$ (cf. \cite[(8.4)]{Serre03inGMScohinv}). From this remark and the assumption $\theta\in \cS(\al')$ we can deduce that
 \[
 \theta=\prod_{i=0}^nN_{L_i/F}(u_i)\pi^{t_i}
 \]where $L_i/F$ is an unramified extension of $\ell$-power degree splitting  $\ell^{n-i}\al'$, $u_i$ is a unit in $L_i$ and $t_i\in\Z$. Since $\theta\in U_F$, we have
 \[
 \sum_{i=0}^nt_i=0\quad\text{and hence }\;  \theta=\prod_{i=0}^nN_{L_i/F}(u_i)\,.
 \]For each $i$, the residue field $l_i$ of $L_i$ splits $\ell^{n-i}\ov{\al}'$. So it follows that
 \[
 \ov{\theta}\;\in\;\prod_{i=0}^nN_{l_i/k}(\ov{u}_i)=\cS(\ov{\al}')\,.
 \]

  (2) The Rost $\ell^n$-divisibility of $k$ implies that $\cR(\ov{\al}')=\cS(\ov{\al}')$. So the result follows immediately from (1).

  (3) Note that $[E:F]$ is a power of $\ell$ and hence invertible in the residue field $k$. Thus, by Hensel's lemma $U_F^1=\{a\in U_F\,|\,\ov{a}=\bar 1\}$ is contained in $N_{E/F}(U_E)$. This implies
  \[
  U_F\cap N_{E/F}(E^*)=N_{E/F}(U_E)=\{a\in U_F\,|\,\ov{a}\in N_{E_0/k}(E_0^*)\}\,.
  \]We have seen $\cR(\al')\cap U_F=\{a\in U_F\,|\,\ov{a}\in \cR(\ov{\al}')\}$ in (1). So the second equality in (3) follows.

  If $\theta\in N_{E/F}(E^*)$, we have $(E/F,\,\pi)\cup (\theta)=0$ since $(E/F)\cup (\theta)=0$ in $H^2(F)$. Hence, when $\theta\in \cR(\al')\cap N_{E/F}(U_E)$, we have
  \[
  \al\cup (\theta)=\al'\cup(\theta)+(E/F,\,\pi)\cup (\theta)=0\,,
  \]whence $\theta\in \cR(\al)$.

  Conversely, if $\theta\in \cR(\al)\cap U_F$, then we have $(E/F,\,\theta)=0\in H^2(F)$ by \eqref{4p1p6}. On the one hand, this implies $(E/F,\,\pi)\cup (\theta)=0$, so that from $\theta\in \cR(\al)$ we obtain $\theta\in \cR(\al')$. On the other hand, it follows that $\theta\in U_F\cap N_{E/F}(E^*)=N_{E/F}(U_E)$. This completes the proof.
\end{proof}

\begin{lemma}\label{4p4nnn}
  With notation as in $\eqref{4p1nnn}$, suppose that $r=v_F(\lambda)$ is a multiple of $\ell^n=\per(\al)$. Write $\ell^m=\per(\al_E)$. Assume the following properties hold for the residue field $E_0$ of $E$:

\begin{enumerate}
    \item $E_0$ is Rost $\ell^m$-divisible.
    \item The corestriction map $H^3(E_0)\to H^3(k)$  is injective.
\end{enumerate}

Then $\al\cup (\lambda)=0$ implies $\lambda\in \prod^m_{i=0}\Nrd(\ell^{m-i}\al)^{\ell^{m-i}}\subseteq \cS(\al)=\prod^n_{i=0}\Nrd(\ell^{n-i}\al)^{\ell^{n-i}}$.
\end{lemma}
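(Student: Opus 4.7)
The plan is to decompose $\lambda=\theta\cdot(-\pi)^r$ into unit and uniformizer parts, and, using the two hypotheses on $E_0$, place each separately in the target group $\mathcal{G}:=\prod_{i=0}^m\Nrd(\ell^{m-i}\al)^{\ell^{m-i}}$.

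Since $\ell^n\mid r$ and $\ell^n\al=0$ in $\Br(F)$, we have $\al\cup((-\pi)^r)=r\bigl(\al\cup(-\pi)\bigr)=(r\al)\cup(-\pi)=0$, so $\al\cup(\lambda)=0$ reduces to $\theta\in\cR(\al)\cap U_F$. Lemma~\ref{4p3nnn}(3) then gives $\bar\theta\in\cR(\ov{\al}')\cap N_{E_0/k}(E_0^*)$; write $\bar\theta=N_{E_0/k}(\bar\eta)$ with $\bar\eta\in E_0^*$. The projection formula yields $\Cor_{E_0/k}(\ov{\al}'_{E_0}\cup\bar\eta)=\ov{\al}'\cup\bar\theta=0$, so by Hypothesis~2 we get $\bar\eta\in\cR(\ov{\al}'_{E_0})$; then Hypothesis~1, applied to $\ov{\al}'_{E_0}$ of period $\ell^m$, yields $\bar\eta\in\cS(\ov{\al}'_{E_0})=\prod_{i=0}^m\Nrd(\ell^{m-i}\ov{\al}'_{E_0})^{\ell^{m-i}}$.

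Decomposing $\bar\eta=\prod_i N_{N_i/E_0}(w_i)^{\ell^{m-i}}$ with $N_i/E_0$ a finite separable extension splitting $\ell^{m-i}\ov{\al}'_{E_0}$, and then taking $N_{E_0/k}$, gives $\bar\theta=\prod_i N_{N_i/k}(w_i)^{\ell^{m-i}}$ with each $N_i/k$ splitting $\ell^{m-i}\ov{\al}'$ (since $E_0\subseteq N_i$). Let $\tilde N_i/F$ be the unramified extension with residue $N_i$ and $\tilde w_i\in U_{\tilde N_i}$ a unit lift of $w_i$; from $E_0\subseteq N_i$ we deduce $E\subseteq\tilde N_i$, so $\tilde N_i$ splits both the unramified summand $\ell^{m-i}\al'$ and the ramified summand $\ell^{m-i}(E/F,\pi)$ of $\ell^{m-i}\al$. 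Thus $y:=\prod_i N_{\tilde N_i/F}(\tilde w_i)^{\ell^{m-i}}\in\mathcal{G}$ satisfies $\bar y=\bar\theta$. The residual $\delta:=\theta/y\in U_F^1$ also lies in $\mathcal{G}$: Hensel's lemma yields $\delta=\delta_0^{\ell^m}$ with $\delta_0\in U_F^1\subseteq N_{E/F}(U_E^1)\subseteq\Nrd(\ell^m\al)$ (using the surjectivity of $N_{E/F}$ on principal units for unramified $E/F$ and the splitting of $\ell^m\al$ by $E$), so $\delta\in\Nrd(\ell^m\al)^{\ell^m}\subseteq\mathcal{G}$; hence $\theta=y\delta\in\mathcal{G}$.

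It remains to place $(-\pi)^r$ in $\mathcal{G}$. The divisibility $d:=[E:F]=\mathrm{ord}(\chi)\mid\ell^n\mid r$ gives $\pi^r=N_{E/F}(\pi^{r/d})\in\Nrd(\ell^m\al)$, and $(-1)^r$ is harmless (equal to $1$ if $\ell=2$ since $r$ is even, and lying in $\Nrd(\al)$ if $\ell$ is odd since $\al$ then has odd degree). The main obstacle I anticipate is upgrading ``$(-\pi)^r\in\Nrd(\ell^m\al)$'' to ``$(-\pi)^r\in\mathcal{G}$,'' i.e.\ expressing it as a product of $\ell^{m-i}$-th powers of reduced norms from the various $\ell^{m-i}\al$. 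This should follow from a careful bookkeeping of $\ell$-adic valuations, using $\ell^n\mid r$ and the lower bound $\mathrm{ord}(\chi)\ge\ell^{n-m}$ (forced by $\ell^{n-m}=\per(\ell^m\al)\mid\ind(\ell^m\al)\mid[E:F]$), distributing the exponent $r$ across unramified extensions of $F$ that split $\ell^{m-i}\al$ and each of which may be chosen to contain $E$. Combined with $\theta\in\mathcal{G}$, this yields $\lambda=\theta(-\pi)^r\in\mathcal{G}$, as desired.
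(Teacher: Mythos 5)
Your handling of the unit part $\theta$ is correct and is essentially the paper's own argument: the paper writes the element as $N_{E/F}(\mu)$ with $\mu\in U_E$ via \eqref{4p4p3v2}, applies the projection formula and the $H^3$-corestriction injectivity to conclude $\al_E\cup(\mu)=0$, and then invokes the Rost $\ell^m$-divisibility of $E_0$ through Lemma\;\ref{4p3nnn}\,(2) before pushing forward by $N_{E/F}$; your version runs the same computation one floor down over $E_0$ and lifts back through unramified extensions containing $E$, which works (your use of $U_F^1\subseteq N_{E/F}(U_E)$ and the splitting of $\ell^{m-i}\al$ by the lifts $\tilde N_i\supseteq E$ is fine).

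The gap is exactly the step you flag and postpone: you never prove $(-\pi)^r\in\mathcal{G}:=\prod_{i=0}^m\Nrd(\ell^{m-i}\al)^{\ell^{m-i}}$, and the bookkeeping you sketch cannot close it. Any unramified $M\supseteq E$ gives $N_{M/F}(\pi)=\pi^{[M:F]}$ with $[E:F]\mid[M:F]$, and if such an $M$ is to split $\ell^{j}\al$ with $j<m$ then $M_0\supsetneq E_0$ (because $\ell^{j}\ov{\al}'_{E_0}\neq 0$), so $[M:F]\ge \ell\,[E:F]$; hence the $\pi$-exponents reachable by your recipe inside the factor $\Nrd(\ell^j\al)^{\ell^j}$ are multiples of $\ell^{j}[E:F]$, and in total multiples of $\ell\,[E:F]$ when $m\ge 1$. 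But $[E:F]$ can equal $\ell^{n}$ while $m\ge 1$ (take $\chi$ of order $\ell^n$ and $\ov{\al}'$ of period $\ell^m$ unrelated to $E_0$, e.g. $k=\C(\!(x)\!)(\!(y)\!)(\!(z)\!)$, $E_0=k(x^{1/\ell^n})$, $\ov{\al}'=(y,z)_\ell$), so $\pi^{\ell^n}$ is unreachable this way; your inequality $[E:F]\ge\ell^{n-m}$ points in the wrong direction for this purpose, and getting $\pi^r$ into $\mathcal{G}$ (as opposed to the larger group $\cS(\al)$, which contains $(F^*)^{\ell^n}$ and hence $(-\pi)^r$ trivially) would require ramified splitting fields and genuinely new input. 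Note how the paper organizes this: it divides $\pi^{r}$ out at the very start (the hypothesis $\al\cup(\lambda)=0$ is preserved since $\ell^n\al=0$) and establishes the refined membership only for the resulting unit; and in the only place the lemma is used with $r\neq 0$ (the reduction step \eqref{4p5nnn}\,(2) for Theorem\;\ref{1p6nnn}) just the containment $\lambda\in\cS(\al)$ is needed, where $(-\pi)^r\in(F^*)^{\ell^n}=\Nrd(\ell^n\al)^{\ell^n}\subseteq\cS(\al)$ is immediate. As written, your proposal proves $\theta\in\mathcal{G}$ and $\lambda\in\cS(\al)$, but the asserted refined conclusion for general $r$ is left as an admitted, and in fact nontrivial, claim.
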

\begin{proof}
  Replacing $\lambda$ by $\lambda.\pi^{-r}$, we may assume $r=v_F(\lambda)$ is zero. Then the hypothesis $\al\cup(\lambda)=0$ means $\lambda\in \cR(\al)\cap U_F$. By \eqref{4p4p3v2},  $\lambda=N_{E/F}(\mu)$ for some unit $\mu$ in $E$. Using the projection formula for corestrictions, we obtain
  \[
    \Cor_{E/F}(\al_E\cup \mu)=\al\cup N_{E/F}(\mu)=\al\cup(\lambda)=0\;\in\;H^3(F)\,.
  \]Notice that $\al_E=\al'_E\in\Br(E)$ is unramified. So the cohomology class $\al_E\cup(\mu)\in H^3(E)$ is unramified and we have
   $\Cor_{E_0/k}(\ov{\al}_E\cup\bar\mu)=0\in H^3(k)$. By the corestriction injectivity assumption, $\ov{\al}_E\cup (\bar \mu)=0$ and hence $\al_E\cup \mu=0\in H^3(E)$. Since the residue field $E_0$ of $E$ is Rost $\ell^m$-divisible by assumption, we deduce from Lemma\;\ref{4p3nnn}  (2) that $\mu\in \cS(\al_E)=\prod^m_{i=0}\Nrd(\ell^{m-i}\al_E)^{\ell^{m-i}}$. It then follows immediately that
   $\lambda=N_{E/F}(\mu)\in \prod^m_{i=0}\Nrd(\ell^{m-i}\al)^{\ell^{m-i}}$.
\end{proof}

The proof of Theorem\;\ref{1p4nnn} is now immediate.

\begin{proof}[{\bf Proof of Theorem\;\ref{1p4nnn}}]\label{proof1p4}
    Let $\al\in \Br(F)$ be a Brauer class of period $\ell$. Assume the two conditions (Rost $\ell$-divisibility and $H^3$-corestriction injectivity) in the theorem hold. In view of Corollary\;\ref{4p3v2}, we need only to prove the following statement: If $\lambda\in U_F$ and $\al\cup(\lambda)=0\in H^3(F)$, then $\lambda\in \cS(\al)=(F^*)^{\ell}\cdot\Nrd(\al)$.

    By  \eqref{4p4p3v2}, we have $\lambda\in N_{E/F}(E^*)$. If $\al_E=0$, then $N_{E/F}(E^*)\subseteq\Nrd(\al)$ and we are done. Otherwise $\per(\al_E)=\ell$ and it suffices to apply Lemma\;\ref{4p4nnn}.
\end{proof}

When $\per(\al)=\ell$, we have in fact a description of the quotient group $\cR(\al)/\cS(\al)$ which holds without assuming the two conditions of Theorem\;\ref{1p4nnn}. To show this, we need the following variant of \cite[Prop.\;2.3]{Merkurjev95ProcSympos58}:

\begin{prop}\label{4p6v2}
  With notation as in $\eqref{4p1p4}$, suppose that $\per(\al)=\ell$. Then
  \[
  \cS(\al)\cap U_F=\big(N_{E/F}\Nrd(\al'_E)\cdot (U_F)^{\ell}\big)\cap U_F=\{a\in U_F\,|\, \ov{a}\in N_{E_0/k}\big(\cS(\ov{\al}'_{E_0})\big)\cdot k^{*\ell}\}\,.
  \]
\end{prop}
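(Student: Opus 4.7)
My plan is to prove the second equality first, since it converts the middle expression into a residue-field statement, and then to deduce the first from it. I will rely on Lemma~\ref{4p3nnn} (applied to the unramified class $\al'_E$ over $E$ with residue $\ov{\al}'_{E_0}$ over $E_0$), on Hensel's lemma (giving $U_F^1\subseteq (U_F)^\ell$ since $\ell\neq\car(k)$, and analogously for $E$), and on the compatibility of reduced norms with reduction modulo $\pi$ on an Azumaya maximal order of the unramified class $\al'_E$.

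A first observation simplifies the middle expression: any $a\in (N_{E/F}\Nrd(\al'_E)\cdot (U_F)^\ell)\cap U_F$, written as $N_{E/F}(\nu)t^\ell$ with $t\in U_F$, forces $N_{E/F}(\nu)\in U_F$, and since $E/F$ is unramified we get $v_E(\nu)=0$, so $\nu\in\Nrd(\al'_E)\cap U_E$. Thus $(N_{E/F}\Nrd(\al'_E)\cdot (U_F)^\ell)\cap U_F=N_{E/F}(\Nrd(\al'_E)\cap U_E)\cdot (U_F)^\ell$. I would next prove the second equality. Since $\per(\ov{\al}'_{E_0})\mid\ell$, one has $\cS(\ov{\al}'_{E_0})=\Nrd(\ov{\al}'_{E_0})\cdot E_0^{*\ell}$, hence $N_{E_0/k}\cS(\ov{\al}'_{E_0})\cdot k^{*\ell}=N_{E_0/k}\Nrd(\ov{\al}'_{E_0})\cdot k^{*\ell}$. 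Both sides of the second equality are subgroups of $U_F$ containing $(U_F)^\ell$, so it suffices to match their images in $U_F/(U_F)^\ell\cong k^*/k^{*\ell}$. For the forward direction, $N_{E/F}(\nu)$ with $\nu\in\Nrd(\al'_E)\cap U_E$ reduces to $N_{E_0/k}(\bar\nu)$ with $\bar\nu\in\Nrd(\ov{\al}'_{E_0})$. For the reverse, given $\bar a=N_{E_0/k}(\Nrd(\bar x_0))\cdot\bar s^\ell$, lift $\bar x_0$ to a unit $x$ in the Azumaya order of $\al'_E$ over $\mathcal{O}_E$ and $\bar s$ to $s\in U_F$; then $N_{E/F}(\Nrd(x))\cdot s^\ell\in N_{E/F}(\Nrd(\al'_E)\cap U_E)\cdot (U_F)^\ell$ is a unit reducing to $\bar a$, so it differs from $a$ by an element of $U_F^1\subseteq (U_F)^\ell$.

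For the first equality, the inclusion $\supseteq$ is immediate: since $(E/F,\pi)_E=0$, we have $\al_E=\al'_E$, so any finite extension $M/E$ splitting $\al'_E$ also splits $\al$, giving $N_{E/F}(N_{M/E}(M^*))=N_{M/F}(M^*)\subseteq\Nrd(\al)\subseteq\cS(\al)$, and combining with $(U_F)^\ell\subseteq\cS(\al)$ yields the inclusion. The reverse $\subseteq$ reduces, via the second equality, to showing $\bar a\in N_{E_0/k}\cS(\ov{\al}'_{E_0})\cdot k^{*\ell}$ for each $a\in\cS(\al)\cap U_F$. Writing $a=\xi\tau^\ell$ with $\xi=N_{M/F}(y)\in\Nrd(\al)$, the easy subcase is $E\subseteq M$: then $\xi=N_{E/F}(\nu)$ with $\nu=N_{M/E}(y)\in\Nrd(\al'_E)$, and the constraint $v_F(a)=0$ forces $\ind(\al'_E)\mid v_E(\nu)$, so an adjustment by a suitable power of $\pi$ (justified by $(E^*)^{\ind(\al'_E)}\subseteq\Nrd(\al'_E)$) produces $\nu'\in\Nrd(\al'_E)\cap U_E$ with $a\in N_{E/F}(\nu')\cdot(U_F)^\ell$. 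The main obstacle is the subcase $E\not\subseteq M$: here $E_0\not\subseteq\ov{M}$, so $\chi_0|_{\ov{M}}$ has order $\ell$, and the requirement that $M$ splits $\al$ forces $\ell\mid e_{M/F}$ via the residue formula~\eqref{4p1p1}. A direct computation then gives $\bar a\equiv\bar u_M^{v_M(y)}\pmod{k^{*\ell}}$, where $u_M=N_{M/F}(\pi_M)/\pi^{f_{M/F}}\in U_F$, and one must argue that this class lies in $N_{E_0/k}\cS(\ov{\al}'_{E_0})\cdot k^{*\ell}$. This last step is where I expect the most work: it may require Galois descent for $\mathbf{SL}_1$ of the division algebra of $\al'_E$ along the cyclic extension $E/F$ (upgrading $\xi^\ell\in N_{E/F}\Nrd(\al'_E)$ to $\xi\in N_{E/F}\Nrd(\al'_E)\cdot (F^*)^\ell$), or the $K$-cohomological approach of Merkurjev's Prop.~2.3 in \cite{Merkurjev95ProcSympos58}, of which the present proposition is a variant.
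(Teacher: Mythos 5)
Your second equality and the inclusion $\supseteq$ in the first are correct, and they run along the same lines as the paper's comparison of its groups $G_2$ and $G_3$ in Prop.~\ref{4p8v2} (Hensel lifting of units, $U_F^1\subseteq (U_F)^{\ell}$, and reduction of reduced norms on an Azumaya order). But the substance of the proposition is the remaining inclusion $\cS(\al)\cap U_F\subseteq \big(N_{E/F}\Nrd(\al'_E)\cdot (U_F)^{\ell}\big)\cap U_F$, and there your argument stops short. After writing $a=N_{M/F}(y)\,\tau^{\ell}$ via the norm principle, you settle only the case $E\subseteq M$; in the essential case $E\not\subseteq M$ you end with ``one must argue that this class lies in $N_{E_0/k}\big(\cS(\ov{\al}'_{E_0})\big)\cdot k^{*\ell}$'' and defer to a hoped-for ``Galois descent for $\mathbf{SL}_1$'' or to Merkurjev's Prop.~2.3. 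That is exactly the content being asked for, not a citable fact: the proposed upgrade $\xi^{\ell}\in N_{E/F}\Nrd(\al'_E)\Rightarrow \xi\in N_{E/F}\Nrd(\al'_E)\cdot(F^*)^{\ell}$ is of the same nature as condition \eqref{6p1p1}, which fails over general fields, so it cannot be taken for granted; and already for $M/F$ totally ramified of degree $\ell$ (so $M_0=k$) your sketch offers no mechanism relating $\ov{N_{M/F}(y)}$ to $N_{E_0/k}\Nrd(\ov{\al}'_{E_0})\cdot k^{*\ell}$, which must use the presentation $\al=\al'+(E/F,\pi)$ in an integral way. (Your displayed reduction also silently drops the factor $N_{M_0/k}(\ov{y}_0)$, which need not lie in $k^{*\ell}$.)

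The paper closes precisely this hole by avoiding any choice of splitting field: it represents $\al$ by $D=\cA\otimes_{\mathcal{O}_F}\Delta(E/F,\sigma,\pi)$ with $\cA$ an Azumaya representative of $\al'$, proves the congruence $\Nrd_D\big(\sum_i d_iu^i\big)\equiv N_{E/F}\big(\Nrd_{\cB}(d_0)\big)\pmod{\pi\mathcal{O}_F}$ (Lemma~\ref{4p7v2}, adapted from Merkurjev's Lemma~2.1), and then Prop.~\ref{4p8v2} reads off the reduction of an arbitrary element of $\big(\Nrd(D)\cdot F^{*\ell}\big)\cap U_F$ in one stroke, with no case distinction on $M$; since $\cS(\al)=\Nrd(\al)\cdot F^{*\ell}$ when $\per(\al)=\ell$, this is the whole hard direction. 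A smaller, fixable slip in your case $E\subseteq M$: the claim that ``$v_F(a)=0$ forces $\ind(\al'_E)\mid v_E(\nu)$'' is not justified as stated (it only gives $\ell\mid v_F(\xi)$, which is automatic); the divisibility does hold, but because $\al'_E$ is unramified, so $v_E$ of any reduced norm of $\al'_E$ lies in $\ind(\al'_E)\Z$. The main gap, however, is the unproved case $E\not\subseteq M$, i.e.\ exactly the step Lemma~\ref{4p7v2} and Prop.~\ref{4p8v2} were designed to handle.
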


Following the same ideas as in \cite[\S2]{Merkurjev95ProcSympos58}, we will base our proof of Prop.\;\ref{4p6v2} (see page \pageref{proof4p6v2}) on Lemma\;\ref{4p7v2} and Prop.\;\ref{4p8v2} below.

\begin{lemma}\label{4p7v2}
  Let $F$ be a henselian discrete valuation field, $\pi\in F$ a uniformizer and $L/F$ a cyclic unramified extension of degree $m$. Let $\sigma$ be a generator of the Galois group $\mathrm{Gal}(L/F)$ and let $\Delta(L/F,\,\sigma,\,\pi)$ be the $F$-algebra generated by $L$ and an element $u$ subject to the following relations:
  \[
  u^m=\pi\quad\text{and}\quad a u=u\sigma(a)\; \text{ for all }\; a\in L\,.
  \]Let $\cA$ be an Azumaya algebra over the valuation ring $\mathcal{O}_F$. Write $\cB=\cA\otimes_{\mathcal{O}_F}\mathcal{O}_L$ and $D=\cA\otimes_{\mathcal{O}_F}\Delta(L/F,\,\sigma,\,\pi)$.

  \begin{enumerate}
    \item Let $\cB[u]$ be the $\mathcal{O}_F$-subalgebra  of $D$ generated by $\cB$ and $u$. Then $D=\cB[u]\otimes_{\mathcal{O}_F}F$.
    \item Let $d=\sum^{m-1}_{i=0}d_iu^i\in\cB[u]$ with each $d_i\in \cB$. Then
    \[
    \Nrd_D(d)\in \mathcal{O}_F\quad\text{and}\quad \Nrd_D(d)\equiv N_{L/F}\left(\Nrd_{\cB}(d_0)\right)\pmod{\pi\mathcal{O}_F}\;.
    \]
  \end{enumerate}
\end{lemma}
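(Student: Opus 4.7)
The plan is to establish (1) by writing down an explicit $\mathcal{O}_F$-basis for $\cB[u]$, and to prove (2) by transferring the computation to the matrix algebra $M_m(\cA_L)$ (where $\cA_L:=\cA\otimes_{\mathcal{O}_F}L$) via the standard splitting of the cyclic algebra $\Delta$ after base change to $L$. For (1), use the left $L$-module decomposition $\Delta=\bigoplus_{i=0}^{m-1}Lu^i$, tensor with $\cA$ over $\mathcal{O}_F$, and observe that the relations $au=u\sigma(a)$ (for $a\in\cB$) and $u^m=\pi$ force $\cB[u]=\bigoplus_{i=0}^{m-1}\cB u^i$ as an $\mathcal{O}_F$-module. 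Tensoring with $F$ and using $\cB\otimes_{\mathcal{O}_F}F=\cA_L$ (valid because $L/F$ is unramified) then recovers $D$.

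For the integrality half of (2), since $\cB[u]$ is a finitely generated $\mathcal{O}_F$-module by (1), every $d\in\cB[u]$ is integral over $\mathcal{O}_F$; hence $\Nrd_D(d)$ is integral over $\mathcal{O}_F$ and, lying in $F$, must belong to $\mathcal{O}_F$ (which is integrally closed in $F$).

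For the congruence, I base change to $L$ via the splitting $\Delta\otimes_F L\cong M_m(L)$ that sends $u\otimes 1$ to the companion matrix $U$ with $1$'s on the subdiagonal and $\pi$ in the upper-right corner, and $a\otimes 1$ (for $a\in L$) to $\mathrm{diag}(a,\sigma(a),\ldots,\sigma^{m-1}(a))$. This induces $D_L\cong M_m(\cA_L)$, under which $d=\sum d_iu^i$ becomes $X=\sum_i \tilde D_iU^i$ with $\tilde D_i=\mathrm{diag}(d_i,\sigma(d_i),\ldots,\sigma^{m-1}(d_i))\in M_m(\cB)$. Crucially, modulo $\pi$ the $\pi$-entry of $U$ vanishes, so $\overline U$ is strictly lower triangular and $\overline X$ is lower triangular with diagonal entries $\sigma^r(\overline{d_0})$ for $r=0,\ldots,m-1$. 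Since $M_m(\cB)$ is Azumaya over $\mathcal{O}_L$, its reduced norm is compatible with reduction modulo $\pi$, and for a lower-triangular matrix (after further splitting $\overline{\cB}$) the reduced norm is the product of the reduced norms of the diagonal entries. Galois-equivariance of $\Nrd$ then yields, letting $L_0$ denote the residue field of $L$,
\[
\overline{\Nrd_{M_m(\cA_L)}(X)}=\prod_{r=0}^{m-1}\sigma^r\big(\Nrd_{\overline{\cB}}(\overline{d_0})\big)=N_{L_0/k}\big(\overline{\Nrd_\cB(d_0)}\big)=\overline{N_{L/F}(\Nrd_{\cB}(d_0))},
\]
which, combined with $\Nrd_D(d)=\Nrd_{M_m(\cA_L)}(X)$, gives the required congruence.

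The main technical point I expect is careful bookkeeping of the two distinct copies of $L$ inside $M_m(\cA_L)$ --- the ``scalar'' copy arising from the $\cA_L$-entries and the ``$\sigma$-twisted diagonal'' copy coming from $\Delta_L\cong M_m(L)$ --- together with verifying that the reduced norm on $M_m(\cB)$ is equivariant for the action of $\mathrm{Gal}(L/F)$ on $\cB$ and commutes with reduction modulo $\pi$. Once these compatibilities are in place, the vanishing of the $\pi$-corner of $U$ modulo $\pi$ reduces the final step to the easy fact about triangular matrices.
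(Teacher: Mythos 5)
Your proof is correct: the direct-sum description $\cB[u]=\bigoplus_{i=0}^{m-1}\cB u^i$, the integrality of $\Nrd_D(d)$ via finiteness of $\cB[u]$ over $\mathcal{O}_F$, the splitting $\Delta\otimes_FL\cong M_m(L)$ sending $u$ to the cyclic-shift matrix with $\pi$ in the corner and $a$ to $\mathrm{diag}(a,\sigma(a),\dots,\sigma^{m-1}(a))$, the triangularity of the image of $d$ modulo $\pi$, and the compatibility of Azumaya reduced norms (and of $N_{L/F}$, for an unramified extension) with reduction all hold as claimed. This is essentially the argument the paper intends: its proof of (2) is merely a citation to Merkurjev's Lemma 2.1, whose computation (representing $D\otimes_FL$ as $m\times m$ matrices over $\cA\otimes_{\mathcal{O}_F}L$ and reducing modulo $\pi$) your write-up spells out and adapts to the Azumaya-order setting in full detail.
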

\begin{proof}
  (1) is clear from the constructions of $\cB$ and $D$. The proof of (2) is an easy adaption of the proof of \cite[Lemma\;2.1]{Merkurjev95ProcSympos58}. 
\end{proof}

\begin{prop}\label{4p8v2}
With notation and hypotheses as in Lemma\;$\ref{4p7v2}$, let $L_0/k$ be the residue field extension of $L/F$ and suppose that $m=[L:F]$ is not divisible by $\car(k)$.

Then
\[
\begin{split}
\left(\Nrd(D)\cdot F^{*m}\right)\cap U_F&=\left(N_{L/F}\Nrd(\cA\otimes L)\cdot (U_F)^m\right)\cap U_F\\
&=\{a\in U_F\,|\,\ov{a}\in N_{L_0/k}\Nrd(\cA\otimes L_0)\cdot k^{*m}\}\,.
\end{split}
\]
\end{prop}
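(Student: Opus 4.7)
The strategy is a circular chain $T_3 \subseteq T_2 \subseteq T_1 \subseteq T_3$, where I label the three sets left-to-right in the statement. The inclusion $T_2 \subseteq T_1$ is immediate from the projection formula $\Nrd_D(b) = N_{L/F}(\Nrd_{\cA \otimes L}(b))$ for $b \in \cA \otimes L \hookrightarrow D$, combined with $U_F^m \subseteq F^{*m}$.

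For $T_3 \subseteq T_2$, the plan is to lift and use Hensel's lemma. Given $a \in U_F$ with $\bar a = N_{L_0/k}(\Nrd_{\cA \otimes L_0}(\bar b_0))\, \bar c_0^m$, lift $\bar b_0 \in (\cA \otimes L_0)^{\times}$ to a unit $b \in \cB^{\times}$ (possible since $\cB = \cA \otimes \mathcal{O}_L$ is Azumaya over the henselian local ring $\mathcal{O}_L$, so units lift), and lift $\bar c_0$ to some $c \in U_F$. The element $a_0 := N_{L/F}(\Nrd_{\cA \otimes L}(b))\, c^m$ lies in $T_2$ and reduces to $\bar a$, so $a/a_0 \in U_F^1$; since $m$ is coprime to $\car(k)$, Hensel's lemma gives $U_F^1 \subseteq U_F^m$, yielding $a \in T_2$.

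The substantive work is $T_1 \subseteq T_3$. Given $a = \Nrd_D(x)\, b^m \in U_F$ with $x \in D^{\times},\, b \in F^{\times}$, I would use Lemma \ref{4p7v2}(1) to assume $x \in \cB[u]$ (replacing $x$ by $\pi^N x$ with $N \gg 0$ and absorbing the resulting $\pi^{Nm \deg\cA}$-factor into $b$ via $\Nrd_D(\pi^N x) = \pi^{Nm \deg\cA}\,\Nrd_D(x)$); dividing out common $\pi$-powers one further arranges $\bar x \ne 0$ in $\bar\cB[u] := \cB[u]/\pi\cB[u]$. Writing $x = \sum_{i=0}^{m-1} d_i u^i$ with $d_i \in \cB$, the favourable case is when $\bar d_0$ is a unit in $\bar\cB = \cA \otimes L_0$: Lemma \ref{4p7v2}(2) then yields $\overline{\Nrd_D(x)} = N_{L_0/k}(\Nrd_{\bar\cB}(\bar d_0)) \in k^{\times}$, so $\Nrd_D(x) \in U_F$, which forces $b \in U_F$; reducing $a = \Nrd_D(x)\,b^m$ modulo $\pi$ delivers $\bar a \in T_3$ directly. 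Conversely, Lemma \ref{4p7v2}(2) shows that if $\bar d_0$ is a nonzero zero-divisor in $\bar\cB$ then $\overline{\Nrd_D(x)} = 0$, so the favourable case is precisely the case $\Nrd_D(x) \in U_F$.

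The main obstacle is to reduce to the favourable case. Letting $i_0 := \min\{i : \bar d_i \ne 0\}$, the operation $x \mapsto \pi^{-1}\, x\, u^{m - i_0}$ (when $i_0 > 0$) stays in $\cB[u]$ by a direct coefficient check using $u^m = \pi$, moves $\bar d_{i_0}$ into the zeroth slot, and multiplies $\Nrd_D(x)$ by $\pm\pi^{-i_0 \deg\cA}$ (an effect that has to be tracked carefully in the equation for $a$). If the new zeroth coefficient is still only a zero-divisor in $\bar\cB$, one combines further $u$-cyclings with left and right multiplication by units of $\cB^{\times}$, which in the Wedderburn decomposition $\bar\cB \cong M_r(\bar D_0)$ amount to elementary row and column operations on the $r \times r$ matrix $\bar d_0$ over the division ring $\bar D_0$; iterating these manipulations while bookkeeping the compensating factors in $F^{*m}$, one reaches a representation of $a$ in which the zeroth coefficient is a unit. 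This is the step that mirrors most closely Merkurjev's argument in \cite[\S2]{Merkurjev95ProcSympos58}, here adapted to the integral henselian setting provided by Lemma \ref{4p7v2}.
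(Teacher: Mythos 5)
Your two easy inclusions are fine and essentially the paper's: $T_2\subseteq T_1$ is the same projection/Brauer-equivalence observation, and your $T_3\subseteq T_2$ (lift a unit of the Azumaya algebra $\cB$ and finish with $U_F^1\subseteq U_F^m$ by Hensel) is a harmless variant of the paper's argument, which lifts through a splitting field $M_0/L_0$ instead. Your ``favourable case'' of $T_1\subseteq T_3$ --- zeroth coefficient $\ov{d}_0$ a unit of $\ov{\cB}$, then conclude with Lemma \ref{4p7v2}(2) --- is exactly how the paper concludes.

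The gap is the reduction to the favourable case, i.e.\ the claim that any $a\in T_1\cap U_F$ admits a representation $a=\Nrd_D(d)g^{m}$ with $d\in\cB[u]$, $g\in U_F$ (equivalently $\Nrd_D(d)\in U_F$, equivalently $\ov{d}_0\in\ov{\cB}^{\times}$). The paper does not reprove this step: it imports it from the proof of Merkurjev's Prop.~2.2 and only then applies Lemma \ref{4p7v2}(2). Your substitute --- iterate ``$u$-cyclings'' and two-sided multiplication by units of $\cB^{\times}$ until $\ov{d}_0$ becomes a unit --- is not justified, and with precisely the moves you list it cannot succeed in general. The cycling map $x\mapsto\pi^{-1}xu^{m-i_0}$ is only available when $\ov{d}_i=0$ for all $i<i_0$, in particular only when $\ov{d}_0=0$; and multiplying $x$ on both sides by units $e,e'\in\cB^{\times}$ replaces $\ov{d}_0$ by $\ov{e}\,\ov{d}_0\,\ov{\sigma}^0(\ov{e}')$, which has the same rank in $\ov{\cB}\cong M_r(\ov{D}_0)$, so a nonzero singular $\ov{d}_0$ stays singular forever. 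A concrete instance: take $\cA=M_2(\mathcal{O}_F)$ and $d=\mathrm{diag}(1,\pi)\in\cB\subseteq\cB[u]$; then $\Nrd_D(d)=\pi^{m}$, so $a=\Nrd_D(d)(\pi^{-1})^{m}=1$ is a perfectly legitimate starting representation of an element of $T_1\cap U_F$, yet $\ov{d}_0=\mathrm{diag}(1,0)$ is a nonzero zero-divisor, all higher coefficients vanish, no cycling move applies, and unit multiplications never make $\ov{d}_0$ invertible: your procedure stalls (here the conclusion is trivial, but the procedure is the proof, and in general nothing in your sketch ever decreases the defect $t=v_F(\Nrd_D(d))/m>0$). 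Removing that defect requires an additional mechanism --- in effect splitting off the non-unit ``elementary-divisor'' part of $d$ and showing its reduced norm is harmless modulo $F^{*m}\cdot N_{L/F}\Nrd(\cA\otimes L)$ --- which is exactly the content of the step the paper cites from Merkurjev; a secondary but related omission is that the factor $\pm\pi^{-i_0\deg\cA}$ produced by a cycling step need not lie in $F^{*m}$, and the promised ``careful tracking'' is never carried out.
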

\begin{proof}
  Let $G_1,\,G_2$ and $G_3$ be the first, second and third groups in the statement of the proposition.

  Note that $D\otimes L$ is Brauer equivalent to $\cA\otimes L$ by construction. Hence,
  \[
  N_{L/F}\Nrd(\cA\otimes L)=N_{L/F}\Nrd(D\otimes L)\subseteq \Nrd(D)\,.
  \]This gives the inclusion $G_2\subseteq G_1$.

  To see $G_3\subseteq G_2$, suppose $a\in G_3$ and write $\ov{a}=N_{L_0/k}(\beta) \gamma^m$ with $\gamma\in k^*$ and $\beta\in \Nrd(\cA\otimes L_0)$. There exists a splitting field $M_0/L_0$ of $\cA\otimes L_0$ such that $\beta=N_{M_0/L_0}(\beta')$ for some $\beta'\in M_0^*$. Let $M$ be the unramified extension of $L$ with residue field $M_0$. Then $M$ splits $\cA\otimes L$, so that $N_{M/L}(M^*)\subseteq \Nrd(\cA\otimes L)$. If $b'\in U_M$ is a lifting of $\beta'$ and $c\in U_F$ is a lifting of $\gamma$, then
  \[
  a^{-1}N_{L/F}N_{M/L}(b')c^m\;\subseteq\; U_F^1=\{x\in F^*\,|\,v_F(x-1)\ge 1\}\,.
  \]By Hensel's lemma we have $U^1_F\subseteq (F^*)^m$. So we get
  \[
  a\in N_{L/F}N_{M/L}(b')c^m \cdot (F^{*m})\subseteq N_{L/F}\Nrd(\cA\otimes L)\cdot (F^*)^{m}
  \]proving $a\in G_3$ as desired.

  Now suppose $b\in G_1=\left(\Nrd(D)\cdot F^{*m}\right)\cap U_F$. As in the proof of \cite[Prop.\;2.2]{Merkurjev95ProcSympos58}, we may assume $b=\Nrd(d)g^{-m}$ for some $d=\sum_{i\ge 0}d_iu^i\in \cB[u]$ (with $\cB=\cA\otimes\mathcal{O}_L$ as in Lemma\;\ref{4p7v2}) and $g\in U_F$. Using Lemma\;\ref{4p7v2} (as a substitute for \cite[Lemma\;2.1]{Merkurjev95ProcSympos58}), we find that
  \[
  \ov{b}=N_{L_0/k}\left(\Nrd(\ov{d}_0)\right)\ov{g}^{-m}\;\in\;N_{L_0/k}\Nrd(\cA\otimes L_0)\cdot k^{*m}\,,
  \]where $\ov{d}_0$ is the canonical image in $\cA\otimes L_0$ of $d_0\in \cB=\cA\otimes \mathcal{O}_L$. We have thus obtained the inclusion $G_1\subseteq G_3$.
\end{proof}

\begin{proof}[{\bf Proof of Proposition\;\ref{4p6v2}}]\label{proof4p6v2}
   Since we have assumed $\per(\al)=\ell$, in the notation of \eqref{4p1p4} we have either $E=F$ or $[E:F]=\ell$.
   Let $H_1,\,H_2$ and $H_3$ be the first, second and third groups in the statement of the proposition. Note that
   \[
   N_{E_0/k}\big(\cS(\ov{\al}'_{E_0})\big)\cdot k^{*\ell}=N_{E_0/k}\Nrd(\ov{\al}'_{E_0})\cdot k^{*\ell}
   \] since both $[E_0:k]$ and $\per(\ov{\al}'_{E_0})$ divide $\ell$. So we have
   \[
   H_3=\{a\in U_F\,|\, \ov{a}\in N_{E_0/k}\Nrd(\ov{\al}'_{E_0})\cdot k^{*\ell}\}\,.
   \]

As in the proof of Prop.\;\ref{4p8v2}, we have $H_1\supseteq H_2\supseteq H_3$. So it suffices to prove $H_1\subseteq H_3$.

If $[E:F]=\ell$, we take $\cA$ to be an Azumaya algebra over $\mathcal{O}_F$ that represents $\al'$ and let $L=E$. Then in Lemma\;\ref{4p7v2} and Prop.\;\ref{4p8v2} the algebra $D$ represents the Brauer class $\al$. Since the group $\cS(\al)$ is equal to $\Nrd(\al)F^{*\ell}$ in the current situation, the result follows from Prop.\;\ref{4p8v2}.

   If $E=F$, then $\al=\al'=\al'_E$ and
   \[
   H_3=\{a\in U_F\,|\,\ov{a}\in \Nrd(\ov{\al}')\cdot k^{*\ell}\}=\{a\in U_F\,|\,\ov{a}\in \cS(\ov{\al}')\}\,.
   \]Now the desired equality $H_1=H_3$ follows from the second equality in \eqref{4p4p2v2}.
\end{proof}

\begin{thm}\label{4p9v2}
  With notation as in $\eqref{4p1p4}$, suppose that $\per(\al)=\ell$.

  Then we have an isomorphism of groups
  \[
  \frac{\cR(\al)}{\cS(\al)}\cong \frac{\cR(\ov{\al}')\cap N_{E_0/k}(E^*_0)}{N_{E_0/k}\left(\cS(\ov{\al}'_{E_0})\right)\cdot k^{*\ell}}
  \]where $E_0$ denotes the residue field of $E$ and $\ov{\al}'\in\Br(k)$ is the canonical image of the unramified Brauer class $\al'$.
\end{thm}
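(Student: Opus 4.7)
The plan is to chain together three already-proved ingredients: Corollary \ref{4p3v2}, Lemma \ref{4p3nnn} (3), and Proposition \ref{4p6v2}. By Corollary \ref{4p3v2}, since $\per(\al)=\ell$, the inclusion $\cR(\al)\cap U_F\hookrightarrow\cR(\al)$ induces an isomorphism
\[
\frac{\cR(\al)\cap U_F}{\cS(\al)\cap U_F}\;\simto\;\frac{\cR(\al)}{\cS(\al)}\,.
\]
So it suffices to identify the left-hand side with the claimed quotient involving the residue field.

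First I would define a homomorphism
\[
\Phi\,:\;\;\cR(\al)\cap U_F\;\lra\;\frac{\cR(\ov{\al}')\cap N_{E_0/k}(E_0^*)}{N_{E_0/k}\bigl(\cS(\ov{\al}'_{E_0})\bigr)\cdot k^{*\ell}}\,,\quad a\longmapsto \ov{a}\pmod{N_{E_0/k}\bigl(\cS(\ov{\al}'_{E_0})\bigr)\cdot k^{*\ell}}\,.
\]
The map is well defined because Lemma \ref{4p3nnn} (3) gives $\ov{a}\in \cR(\ov{\al}')\cap N_{E_0/k}(E_0^*)$ whenever $a\in\cR(\al)\cap U_F$.

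Next I would check surjectivity. Given any $\bar\beta\in \cR(\ov{\al}')\cap N_{E_0/k}(E_0^*)$, lift it to a unit $a\in U_F$ (using that $U_F\to k^*$ is surjective). Then $\bar a=\bar\beta\in\cR(\ov{\al}')\cap N_{E_0/k}(E_0^*)$, so the second part of Lemma \ref{4p3nnn} (3) places $a$ in $\cR(\al)\cap U_F$, and $\Phi(a)$ is the class of $\bar\beta$ by construction. So $\Phi$ is surjective.

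Finally I would compute $\ker\Phi$. An element $a\in\cR(\al)\cap U_F$ lies in $\ker\Phi$ iff $\ov{a}\in N_{E_0/k}\bigl(\cS(\ov{\al}'_{E_0})\bigr)\cdot k^{*\ell}$. By Proposition \ref{4p6v2} this condition describes exactly the elements of $\cS(\al)\cap U_F$; and since $\cS(\al)\subseteq\cR(\al)$ automatically, the extra constraint $a\in\cR(\al)$ is redundant. Therefore $\ker\Phi=\cS(\al)\cap U_F$, and $\Phi$ descends to an isomorphism of $(\cR(\al)\cap U_F)/(\cS(\al)\cap U_F)$ with the target group. Combined with the isomorphism from Corollary \ref{4p3v2} this yields the theorem. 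None of the steps look hard: the entire argument is a diagram chase once one accepts the residue-field descriptions of $\cR(\al)\cap U_F$ and $\cS(\al)\cap U_F$, and the only subtle point to double-check is that the lifting in the surjectivity step does produce an element of $\cR(\al)$ (which is precisely the content of Lemma \ref{4p3nnn} (3)).
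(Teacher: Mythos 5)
Your proposal is correct and follows essentially the same route as the paper: the paper likewise combines Corollary \ref{4p3v2}, the description \eqref{4p4p3v2} of $\cR(\al)\cap U_F$, and Proposition \ref{4p6v2}, and then observes that reduction modulo the maximal ideal induces the desired isomorphism. Your explicit check of well-definedness, surjectivity via lifting units, and the kernel computation (with the remark that $\cS(\al)\subseteq\cR(\al)$ makes the extra constraint redundant) just spells out the "easily seen" step in the paper's proof.
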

\begin{proof}
It is easily seen that  the natural map
  \[
  \frac{\{a\in U_F\,|\,\ov{a}\in \cR(\ov{\al}')\cap N_{E_0/k}(E^*_0)\}}{\{a\in U_F\,|\,\ov{a}\in N_{E_0/k}\left(\cS(\ov{\al}'_{E_0})\right)\cdot k^{*\ell}\}}\lra \frac{\cR(\ov{\al}')\cap N_{E_0/k}(E^*_0)}{N_{E_0/k}\left(\cS(\ov{\al}'_{E_0})\right)\cdot k^{*\ell}}\;;\quad a\longmapsto \ov{a}
  \]is an isomorphism. Thus, combining Corollary\;\ref{4p3v2}, \eqref{4p4p3v2} and Proposition\;\ref{4p6v2} finishes the proof.
\end{proof}

If the corestriction map $\Cor_{E_0/k}: H^3(E_0)\to H^3(k)$ is injective, then  the group $\cR(\ov{\al}')\cap N_{E_0/k}(E^*_0)$ is equal to
$N_{E_0/k}\left(\cR(\ov{\al}'_{E_0})\right)$. If moreover $\cR(\ov{\al}_{E_0})=\cS(\ov{\al}'_{E_0})$, we see immediately from Theorem\;\ref{4p9v2} that $\cR(\al)=\cS(\al)$. Therefore, Theorem\;\ref{4p9v2} is a generalization of Theorem\;\ref{1p4nnn}.

\

The rest of this section is devoted to the proof of Theorem\;\ref{1p6nnn}.

\medskip

\newpara\label{4p5nnn} Throughout what follows, we keep the notation and hypotheses of Theorem\;\ref{1p6nnn}. Let $\al\in \Br(F)[\ell^n]$ and suppose $\lambda\in F^*$ lies in the Rost kernel of $\al$, i.e.,
\[
\al\cup(\lambda)=0\in H^3(F)=H^3(F,\,\Q_{\ell}/\Z_{\ell}(2))\,.
 \]
 We fix a uniformizer $\pi$ of $F$ and decompose $\al$ into $\al=\al'+(E/F,\,\pi)$, where $\al'\in \Br(F)$ is unramified and $E/F$ is a cyclic unramified extension whose degree divides $\ell^n$ . Note that the residue field extension $E_0/k$ of $E/F$ determines the residue $\partial(\al)\in H^1(k)$ of $\al$.
Also, we  write
\[
\lambda=\theta.(-\pi)^{s\ell^m}\,, \quad \text{with }\; s\in\Z\,,\,m\in\N\,,\,\theta\in U_F=\{a\in F^*\,|\,v_F(a)=0\}\,.
\]

\medskip

Our goal is to prove $\lambda\in\cS(\al)$.

\

We first explain a few reductions for the proof.
\begin{enumerate}
\item Without loss of generality, we may and we will assume $\per(\al)=\ell^n$.
  \item We may assume $v_F(\lambda)=\ell^m$ with $0\le m<n$.

Indeed, we have already seen that this is true if $\per(\al)=\ell^n\,|\,v_F(\lambda)$ (cf. Lemma\;\ref{4p4nnn}). Thus, we may assume $v_F(\lambda)=\ell^ms$ with $0\le m<n$ and $s\notin\ell\Z$. Choose $a,\,b\in \Z$ such that $sa+b\ell^{n-m}=1$. Let $\lambda_1=\lambda^a.\pi^{b\ell^n}$. Then
\[
v_F(\lambda_1)=\ell^msa+b\ell^n=\ell^m\quad\text{and}\quad \al\cup(\lambda_1)=a.\al\cup\lambda=0\,.
\]
If we know $\lambda_1\in \cS(\al)$, then we can deduce that $\lambda^a\in \cS(\al)$. Since the quotient group $F^*/\cS(\al)$ is $\ell$-primary torsion, this will imply $\lambda\in \cS(\al)$ as desired.
  \item In the remainder of this section, we shall prove $\lambda\in\cS(\al)$ by induction on $m$. The case $m=0$ has already been treated in Lemma\;\ref{4p2nnn}, which is valid even without assuming $\cd_{\ell}(k)\le 2$ nor $\mu_{\ell^n}\subseteq k$.

So we will assume $1\le m<n$ from now on.
  \item We may further assume $\ell^m\al'\neq 0$.

  Note that if $\per(\al')\le v_F(\lambda)=\ell^m$, we have
\[
\begin{split}
  0&=\al\cup(\lambda)=\al\cup (\theta)+\ell^m\al\cup (-\pi)\\
  &=\al\cup(\theta)+\ell^m\al'\cup (-\pi) \quad\text{ since } (E/F,\,\pi)\cup (-\pi)=0\\
  &=\al\cup(\theta)\,,\quad\text{since } \ell^m\al'=0 \text{ in this case}.
\end{split}
\]Thus, by Lemma\;\ref{4p4nnn} we have $\theta\in \cS(\al)$. On the other hand, since $\ell^m\al\cup(-\pi)=\ell^m\al'\cup (-\pi)=0$, by induction on the period, we obtain $-\pi\in \cS(\ell^m\al)$, whence $(-\pi)^{\ell^m}\in \cS(\ell^m\al)^{\ell^m}\subseteq\cS(\al)$. Hence $\lambda=(-\pi)^{\ell^m}\theta\in\cS(\al)$, proving what we want.
\item We may assume $\lambda\notin F^{*\ell}$.

Indeed, if $\lambda=\lambda_1^{\ell}$ for some $\lambda_1\in F^*$, then $\ell\al\cup \lambda_1=0$ and $v_F(\lambda_1)=\ell^{m-1}<\ell^m$. By induction on $m$ we have $\lambda_1\in\cS(\ell\al)$ and hence $\lambda\in \cS(\ell\al)^{\ell}\subseteq\cS(\al)$.
\end{enumerate}

\newpara\label{4p6nnn} With notation and hypotheses as above, by an {\bf \emph{inductive pair}} of $(\al,\,\lambda)$ we mean a degree $\ell$ extension $L/F$ together with an element $\mu\in L^*$ such that $\lambda=N_{L/F}(\mu)$ and $\mu\in\cS(\al_L)$. Clearly, if such a pair exists, we will have $\lambda\in \cS(\al)$.

To obtain an inductive pair, our basic strategy is the following: If $L/F$ is an unramified extension of degree $\ell$ with residue field $L_0/k$ and if $\xi\in U_L$ is an element such that
\begin{equation}\label{4p6p1}
\theta=N_{L/F}(\xi)\quad\text{and}\quad \ell^{m-1}\ov{\al}'_{L_0}=(E_0L_0/L_0\,,\,\ov{\xi})\;\in\;\Br(L_0)
\end{equation}then the pair $(L/F,\,\mu:=(-\pi)^{\ell^{m-1}}\xi)$ is an inductive pair of $(\al,\,\lambda)$.

To prove this, first observe that the element $\mu=(-\pi)^{\ell^{m-1}}\xi$ clearly satisfies $N_{L/F}(\mu)=\lambda$. Moreover, computing the residue of $\al_L\cup\mu$ and taking \eqref{4p6p1} into account we get
\[
\begin{split}
  \partial_L(\al_L\cup\mu)&=\partial_L\big(\al'_L\cup (\mu)\big)+\partial_L\big((EL/L,\,\pi)\cup (\mu)\big)\\
  &=\ell^{m-1}.\bar{\al}'_{L_0}+\partial_L\big((EL/L,\,\pi)\cup(\xi)\big)\\
  &=\ell^{m-1}\bar{\al}'_{L_0}-(E_0L_0/L_0,\,\bar{\xi})=0\,.
\end{split}
\]Since the residue field $L_0$ has cohomological $\ell$-dimension $\le 2$, it follows that $\al_L\cup(\mu)=0$. Noticing that $v_L(\mu)=\ell^{m-1}<\ell^m$, the induction hypothesis implies $\mu\in \cS(\al_L)$. This proves our claim.

\

\newpara\label{4p7nnn} Consider the canonical image $\bar\theta\in k$ of $\theta\in U_F$. Suppose that there exists a (separable) degree $\ell$ extension $L_0/k$ and an element $\xi_0\in L_0^*$ such that
\[
\bar\theta=N_{L_0/k}(\xi_0)\quad\text{and}\quad \ell^{m-1}\bar{\al}'_{L_0}=(E_0L_0/L_0,\,\xi_0)\;\in\;\Br(L_0)\,.
\]Then we may take $L/F$ to be the unramified extension with residue field $L_0/k$. Let $\xi_1\in U_L$ be a lifting of $\xi_0\in L_0^*$. We have
\[
\ov{N_{L/F}(\xi_1)}=N_{L_0/k}(\bar\xi_1)=N_{L_0/k}(\xi_0)=\bar\theta\,.
\]Hence, $\theta=N_{L/F}(\xi_1)\rho$ for some $\rho\in U^1_F$. By Hensel's lemma, $\rho=\rho_1^{\ell}$ for some $\rho_1\in U_F^1$. Putting $\xi=\xi_1\rho_1$ we obtain
\[
\begin{split}
  &\bar\xi=\bar\xi_1\cdot\bar\rho_1=\xi_0\cdot 1=\xi_0\\
\text{and}\quad & \theta=N_{L/F}(\xi_1)\rho_1^{\ell}=N_{L/F}(\xi_1\rho_1)=N_{L/F}(\xi)\,.
\end{split}
\]This means that $(L/F,\,\xi)$ is a pair satisfying \eqref{4p6p1}.

Thus, the proof of our main result is reduced to a problem that only involves data over the residue field $k$: the image $\bar\theta\in k$ of $\theta\in U_F$, the residue Brauer class $\bar{\al}'\in \Br(k)$ and the cyclic extension $E_0/k$ determined by the residue $\partial(\al)\in H^1(k)$ of $\al$.

Notice that the assumption $\al\cup\lambda=0$ at the beginning implies that $\ell^m.\bar\al'=(E_0/k,\,\bar\theta)$. Also, we have assumed $\lambda\notin F^{*\ell}$, so that $\bar\theta\notin k^{*\ell}$.

\

Changing notation, we are left to prove the following key lemma:

\begin{lemma}\label{4p8nnn}
  Let $k$ be a henselian discrete valuation field with residue field $k_0$. Assume that $\mathrm{char}(k_0)\neq\ell$ and $\cd_{\ell}(k)=2$. Let $\beta\in \Br(k)[\ell^n]$ be such that $0\neq\ell^m\beta=(K/k,\,\theta)$, where $1\le m<n$, $\theta\in k^*\setminus k^{*\ell}$ and $K/k$ is a cyclic extension with $[K:k]\,|\,\ell^n$.  Suppose that $\mu_{\ell^n}\subseteq k$.

  Then there exists a degree $\ell$ extension $L/k$ and an element $\xi\in L^*$ such that
  \begin{equation}\label{4p8p1}
\theta=N_{L/k}(\xi)\quad\text{and}\quad    \ell^{m-1}\beta_L=(KL/L,\,\xi)\;\in\;\Br(L)\,.
  \end{equation}
\end{lemma}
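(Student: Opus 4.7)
The natural candidate for $L$ is the Kummer extension $L = k(\sqrt[\ell]{\theta})$, which has degree $\ell$ over $k$ since $\theta \notin k^{*\ell}$ and $\mu_\ell \subseteq k$. Setting $\alpha := \sqrt[\ell]{\theta} \in L$ one has $N_{L/k}(\alpha) = (-1)^{\ell-1}\theta$, so $N_{L/k}(\alpha)=\theta$ for $\ell$ odd; for $\ell=2$ one either replaces $\alpha$ by $-\alpha$ or takes $L = k(\sqrt{-\theta})$ instead. From $\theta = \alpha^\ell$ in $L^*$ and $\ell^m\beta = \chi \cup \theta$ one computes
\[
\ell^m\beta_L = (\chi\cup\theta)_L = \ell\,(\chi_L\cup \alpha),
\]
so that $\delta := \ell^{m-1}\beta_L - \chi_L\cup \alpha \in \Br(L)[\ell]$. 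The remaining task is to produce $u \in L^*$ with $N_{L/k}(u) = 1$ and $\chi_L \cup u = \delta$, for then $\xi := \alpha u$ fulfils both conditions of \eqref{4p8p1}.

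To exhibit such a $u$ the plan is to descend to the residue field. Since $k$ is a henselian discrete valuation field with $\car(k_0) \neq \ell$ and $\cd_\ell(k) = 2$, the standard formula forces $\cd_\ell(k_0) \leq 1$, and in particular $\Br(k_0)[\ell^\infty] = 0$. Hence every $\ell$-power Brauer class over $k$ has the form $\chi''\cup\pi_0$ for an unramified character $\chi''$ and the chosen uniformizer $\pi_0$; in particular $\beta = \chi'\cup\pi_0$ for a unique unramified $\chi' \in H^1(k)$. The same input shows that $\Br(L)[\ell]$ injects via the residue map $\partial_L\colon \Br(L)[\ell] \to H^1(L_0, \Z/\ell)$, where $L_0$ is the residue field of $L$. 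Using the Kummer splitting $k^*/k^{*\ell^n} \cong U_k/U_k^{\ell^n} \oplus \Z/\ell^n\cdot\pi_0$ afforded by $\mu_{\ell^n}\subseteq k$, the relation $\ell^m(\chi'\cup\pi_0) = \chi\cup\theta$ unfolds, via the residue formulas \eqref{4p1p1}--\eqref{4p1p2}, into explicit identities between unramified parts and residues over $k_0$. These identities let me compute $\partial_L(\delta)$ and solve $\chi_L\cup u = \delta$ at the residue level; the kernel of the map $L^* \to \Br(L)[\ell^n]$, $u\mapsto \chi_L\cup u$, equals $N_{KL/L}((KL)^*)$, inside which Hilbert 90 for $L/k$ provides the slack needed to enforce $N_{L/k}(u)=1$.

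The main obstacle is the case analysis forced by the positions of $\chi$ and $\theta$ relative to $\pi_0$: roughly four cases according to whether $\chi$ is unramified and whether $v_k(\theta)\in\ell\Z$. In certain mixed configurations the naive choice $L = k(\sqrt[\ell]{\theta})$ is inadequate, because the scheme above requires $\delta \in \ker(\mathrm{Res}_{KL/L}) = \chi_L\cup L^*$, equivalently $\ell^{m-1}\beta_{KL} = 0$, and this equality is not automatic (one only has $\ell^m\beta_{KL}=0$ for free). In those cases the remedy is to replace $L$ by $k(\sqrt[\ell]{\theta c^\ell})$ for an appropriately chosen $c \in k^*$, or by a totally ramified degree-$\ell$ Eisenstein extension; the choice of $c$ (or of the generator of the Eisenstein) is dictated by the residue computation arising from $\beta = \chi'\cup\pi_0$. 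The case $\ell=2$ demands additional sign bookkeeping because of the $(-1)^{\ell-1}$ in the norm formula.
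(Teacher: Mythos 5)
Your overall framework is fine and, in the case $v_k(\theta)\notin\ell\Z$, it essentially reproduces the paper's argument: there the paper takes the totally ramified extension $L=k(\sqrt[\ell]{-\theta})$, $\xi=-\sqrt[\ell]{-\theta}$, and closes the Brauer-group condition by observing that $\Cor_{L/k}:H^2(L)\to H^2(k)$ is an isomorphism because $\cd_{\ell}$ of the (common) residue field is $\le 1$. The genuine gap is in the other case, $\ell\mid v_k(\theta)$, which is exactly where all the work of the lemma lies. Writing $\theta=\theta_0x^{\ell s}$ with $\theta_0$ a unit, your default field $L=k(\sqrt[\ell]{\theta})=k(\sqrt[\ell]{\theta_0})$ is the particular unramified extension determined by $\bar\theta_0$, and the solvability condition you correctly isolate, namely $\ell^{m-1}\beta_{KL}=0$ (equivalently $\delta\in\chi_L\cup L^*$), has no reason to hold for it. Neither of your remedies repairs this: $k(\sqrt[\ell]{\theta c^{\ell}})$ is the \emph{same} field as $k(\sqrt[\ell]{\theta})$, so varying $c$ changes nothing about the obstruction; and a totally ramified degree-$\ell$ extension $L=k(\sqrt[\ell]{w})$ (with $v_k(w)$ prime to $\ell$ --- the only kind available since $\car(k_0)\neq\ell$) can never have $\theta$ as a norm in this case, because $\partial\big((w,\theta)_{\ell}\big)\equiv\bar\theta_0^{-1}$ in $k_0^*/k_0^{*\ell}$, which is nontrivial since $\theta\notin k^{*\ell}$ forces $\bar\theta_0\notin k_0^{*\ell}$. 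So one must search among \emph{all} unramified degree-$\ell$ extensions, and the entire content of the lemma is the explicit choice: the paper extracts from the residue of $\ell^m\beta=(K/k,\theta)$ (formula \eqref{4p8p4}) an element $\bar b_0\in k_0^*\setminus k_0^{*\ell}$ with $\bar b^{\ell^{m-1}}\bar a_0^{\,s}=\bar b_0^{\ell^{t-1}}$, takes $L_0=k_0(\sqrt[\ell]{\bar b_0})$, and then, using surjectivity of the norm (from $\cd_{\ell}(k_0)\le 1$), a kernel-of-norm adjustment and $\mu_{\ell^n}\subseteq k$, corrects a norm preimage of $\bar\theta_0$ so that \eqref{4p8p3} holds. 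Your proposal replaces this construction by ``these identities let me compute $\partial_L(\delta)$ and solve at the residue level,'' which is precisely the step that needs proof.

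Two smaller points. First, even when $\ell^{m-1}\beta_{KL}=0$, the solutions of $\chi_L\cup(u)=\delta$ form a coset of $N_{KL/L}((KL)^*)$, and making that coset meet $\ker N_{L/k}$ requires $N_{L/k}(u_0)\in N_{KL/k}((KL)^*)$; corestricting your identity only gives $N_{L/k}(u_0)\in N_{K/k}(K^*)$, so the ``Hilbert 90 slack'' step is itself unjustified as stated. Second, for $\ell=2$ replacing $\alpha$ by $-\alpha$ does not change $N_{L/k}(\alpha)$; only the variant $L=k(\sqrt{-\theta})$ fixes the sign, as in the paper.
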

\begin{proof}
  If $v_k(\theta)\notin \ell\Z$, then $L:=k(\sqrt[\ell]{-\theta})$ is a degree $\ell$ extension of $k$ and the element $\xi:=-\sqrt[\ell]{-\theta}$ satisfies $N_{L/k}(\xi)=\theta$. The extension $L/k$ is totally ramified, so the residue field $L_0$ of $L$ is the same as the residue field $k_0$ of $k$. In the commutative diagram with exact rows (cf. \cite[p.21, Prop.\;8.6]{Serre03inGMScohinv})
  \[
  \begin{CD}
    0 @>>> H^2(L_0) @>>> H^2(L) @>{\partial}>> H^1(L_0) @>>> 0 \\
    && @V{e(L/k)\mathrm{Cor}}VV @VV{\mathrm{Cor}}V @VV{\mathrm{Cor}}V \\
    0 @>>> H^2(k_0) @>>> H^2(k) @>{\partial}>> H^1(k_0) @>>> 0 \\
  \end{CD}
  \]we have $H^2(L_0)=H^2(k_0)=0$ since $\cd_{\ell}(k_0)\le 1$. So, the corestriction map $\mathrm{Cor}_{L/k}: H^2(L)\to H^2(k)$ is an isomorphism. From this we obtain
  $\ell^{m-1}\beta_L=(KL/L,\,\xi)$, since
  \[
  \Cor_{L/k}(\ell^{m-1}\beta_L)=\ell^m\beta=(K/k,\,\theta)=\Cor_{L/k}((KL/L,\,\xi))\,.
  \]

  Next let us assume $\ell\,|\,v_k(\theta)$. We shall construct an unramified extension $L/k$ of degree $\ell$ together with an element $\xi\in L^*$ such that
  \begin{equation}\label{4p8p2}
    \theta=N_{L/k}(\xi)\quad\text{and}\quad \partial_L((KL/L,\,\xi)) \text{ equals } \partial(\ell^{m-1}\beta)_{L_0}(=\partial_L(\ell^{m-1}\beta_L))\;.
  \end{equation}
  This condition is equivalent to \eqref{4p8p1} by the assumption on the cohomological dimension.

  We choose a uniformizer $x$ of $k$ and write
  \[
  \theta=\theta_0x^{\ell s},\quad\text{ where } s\in\Z \text{ and } v_k(\theta_0)=0.
   \]By associating to each cyclic extension $M_0/k_0$ of $\ell$-power degree the Brauer class of the cyclic algebra $(M/k,\,x)$, $M/k$ denoting the unramified extension with residue field $M_0/k_0$, we get an inverse of the isomorphism $\partial: H^2(k)\to H^1(k_0)$. In particular, we may write $\beta=(M/k,\,x)$ for some unramified cyclic extension $M/k$ of degree dividing $\ell^n$. By Kummer theory (and using the assumption $\mu_{\ell^n}\subseteq k$), we have
   \[
   M=k(\sqrt[\ell^n]{b})\quad\text{ for some } b\in k^*\;\text{ with } v_k(b)=0\,.
    \](Note that we have $v_k(b)=0$ since $M/k$ is unramified.) Using a fixed primitive $\ell^n$-th root of unity, we may write $\beta$ in the form of a symbol algebra: $\beta=(b,\,x)_{\ell^n}$.
   Similarly, we may write
   \[
   K=k(\sqrt[\ell^n]{a})\quad\text{ for some }\; a\in k^*\;\text{ with } 1\le v_k(a)=\ell^t\le \ell^n\;.
   \]

   Suppose $L/k$ is an unramified extension of degree $\ell$ for which we can find an element $\bar\xi_0$ in the residue field $L_0$ such that $\bar{\theta}_0=N_{L_0/k_0}(\bar\xi_0)$. Then there exists a lifting $\xi_0\in U_L$ of $\bar\xi_0$ with $N_{L/k}(\xi_0)=\theta_0$. (The basic idea for the proof of this statement has been discussed in \eqref{4p7nnn}.) Thus, $\theta=\theta_0x^{\ell s}$ is the norm of $\xi:=\xi_0 x^s\in L$. Computation now shows
   \[
   \begin{split}
     \partial_L((KL/L,\,\xi))&=(-1)^{v(a)s}\bar\xi_0^{-v(a)}\bar{a}_0^{-s}\;\in\; L^*_0/L_0^{*\ell^n}\,,\\
     \text{and}\quad \partial(\ell^{m-1}\beta)_{L_0}&=\bar b^{\ell^{m-1}}\;\in\; L^*_0/L_0^{*\ell^n}\,,
   \end{split}
   \]where $a_0:=x^{v(a)}/a$. (Here we use a primitive root of unity to identity $H^1(L_0)$ with $L^*/\ell^n$.) Therefore, to have condition \eqref{4p8p2} satisfied, it suffices to find a degree $\ell$ extension $L_0/k_0$ and $\bar\xi_0\in L_0^*$ with $N_{L_0/k_0}(\bar\xi_0)=\bar\theta_0$ such that
   \begin{equation}\label{4p8p3}
     (-1)^{v(a)s}\bar b^{\ell^{m-1}}\cdot \bar\xi_0^{v(a)}\bar{a}_0^{s}\;\in\; L_0^{*\ell^n}\,.
   \end{equation}

   Notice that the assumption $\ell^m\beta=(K/k,\,\theta)=(a,\,\theta)_{\ell^n}$ yields
   \[
   \bar b^{\ell^m}=\partial(\ell^m\beta)=(-1)^{v(a)s\ell}\bar\theta_0^{-v(a)}\bar a_0^{-s\ell}\;\in\; k^*_0/k_0^{*\ell^n}
   \]i.e.,
   \begin{equation}\label{4p8p4}
     (-1)^{v(a)s\ell}\bar b^{\ell^m}\cdot \bar\theta_0^{v(a)}\bar a_0^{s\ell}\;\in\; k_0^{*\ell^n}\;.
   \end{equation}
   If $\ell^t=v(a)=1$ (i.e. $t=0$), this contradicts the assumption $\theta_0x^{\ell s}=\theta\notin k^{*\ell}$. So we have $t\ge 1$ and $\ell\,|\,v(a)=\ell^t$. In particular, $(-1)^{v(a)s}\in k_0^{*\ell^n}$. Hence, we may ignore the $(-1)$-powers in \eqref{4p8p3} and \eqref{4p8p4}.

   From \eqref{4p8p4} we get
   \[
    \text{the order of } \bar b^{\ell^m}\bar a_0^{\ell s}  \text{ in }\; k_0^*/\ell^n =\text{the order of } \bar\theta_0^{\ell^t} \text{ in } k^*_0/k_0^{*\ell^n} =\ell^{n-t}
   \]since $\theta_0\notin k_0^{*\ell}$ (and $k_0$ contains enough roots of unity). It follows that $\bar b^{\ell^m}\cdot \bar a_0^{\ell s}\in k_0^{*\ell^t}\setminus k_0^{*\ell^{t+1}}$, and hence
   \[
   \bar b^{\ell^{m-1}}\bar a_0^{s}=\bar{b}_0^{\ell^{t-1}}\,,\quad\text{for some } \bar b_0\in k_0^*\setminus k_0^{*\ell}\;.
   \]
   Now set
   \[
   L_0:=k_0(\sqrt[\ell]{\bar b_0})\,,\;\bar c_0=\sqrt[\ell]{\bar b_0}\in L_0^*\quad\text{and}\quad \bar c=\bar c_0^{\ell^{t-1}}\,.
   \]Then $\bar c^{\ell}=\bar b_0^{\ell^{t-1}}=\bar b^{\ell^{m-1}}\bar a_0^s$. Since $\cd_{\ell}(k_0)\le 1$, the norm map $N: L_0^*\to k_0^*$ is surjective. So we can find $\bar\xi_1\in L_0^*$ such that $N(\bar\xi_1)=\bar\theta_0$. Then
   \[
   N(\bar c\bar\xi_1^{\ell^{t-1}})=N(\bar c_0)^{\ell^{t-1}}\bar\theta_0^{\ell^{t-1}}=\pm \bar b_0^{\ell^{t-1}}\bar\theta^{\ell^{t-1}}_0=\pm \bar b^{\ell^{m-1}}\bar a_0^s\cdot\bar\theta_0^{\ell^{t-1}}\;\in \;k_0^{*\ell^{n-1}}
   \]by \eqref{4p8p4}. (Here if $\ell=2$, then $-1\in k_0^{*\ell^{n-1}}$ since we assumed $\mu_{\ell^n}\subseteq k_0$.) Writing $T=\ker(N: L_0^*\to k_0^*)$, we can deduce from the commutative diagram
   \[
  \begin{CD}
    0 @>>> T @>>> L_0^* @>{N}>> k_0^* @>>> 0 \\
    && @V{\ell^{n-1}}VV @VV{\ell^{n-1}}V @VV{\ell^{n-1}}V \\
    0 @>>> T @>>> L_0^* @>{N}>> k_0^* @>>> 0 \\
 && @VVV @VVV @VVV \\
&& T/\ell^{n-1} @>>> L_0^*/\ell^{n-1} @>{N}>> k_0^*/\ell^{n-1} @>>> 0 \\
  \end{CD}
  \]that
  \begin{equation}\label{4p8p5}
    \bar c.\bar\xi_1^{\ell^{t-1}}.\bar\rho\;\in \;L_0^{*\ell^{n-1}}\quad\text{ for some } \bar\rho\;\in\;T\,.
  \end{equation}
  Note that $\bar c=\bar c_0^{\ell^{t-1}}$. Thus, \eqref{4p8p5} implies $\bar\rho=\bar\rho_0^{\ell^{t-1}}$ for some $\bar\rho_0\in L_0^*$. Since $N(\bar\rho_0)^{\ell^{t-1}}=N(\rho)=1$, we have $N(\bar\rho_0)=\eta^{-\ell}=N(\eta^{-1})$ for some $\eta\in \mu_{\ell^t}\subseteq k_0^*$. Putting $\bar\rho_1=\bar\rho_0.\eta$ and $\bar\xi_0=\bar\xi_1.\bar\rho_1$, we get
  \[
  N(\bar\rho_1)=1\quad\text{and}\quad N(\bar\xi_0)=N(\bar\xi_1)=\bar\theta_0\,.
  \]
  Finally,
  \[
  \begin{split}
    \bar b^{\ell^{m-1}}.\bar a_0^s.\bar\xi_0^{\ell^t}&=\bar c^{\ell}.\bar\xi_0^{\ell^t}=\left(\bar c.\bar\xi_0^{\ell^{t-1}}\right)^{\ell}=\left(\bar c.\bar\xi_1^{\ell^{t-1}}.\bar\rho_1^{\ell^{t-1}}\right)^{\ell}\\
     &=\left(\bar c.\bar\xi_1^{\ell^{t-1}}.\bar\rho_0^{\ell^{t-1}}.\eta^{\ell^{t-1}}\right)^{\ell}=\left(\bar c.\bar\xi_1^{\ell^{t-1}}.\bar\rho\right)^{\ell}\cdot \eta^{\ell^t}\\
     &=\left(\bar c.\bar\xi_1^{\ell^{t-1}}.\bar\rho\right)^{\ell}\;,\quad \text{since }\eta\in \mu_{\ell^t}\;.
  \end{split}
  \]The last term belongs to $L_0^{*\ell^n}$, by \eqref{4p8p5}.   We have thus obtained a pair $(L_0/k_0,\,\bar\xi_0)$ satisfying \eqref{4p8p3}. So the proof is finished.
\end{proof}

\section{Tame classes of  period equal to residue characteristic}\label{sec5}

Our aim in this section is to state and prove a variant of Theorem\;\ref{1p4nnn} for Brauer classes of prime period $p$, when $p$ is the characteristic of the residue field $k$.

\medskip

\newpara\label{5p1nnn} Let us first recall a few facts about the Kato--Milne cohomology.

 Let $k$ be a field of characteristic $p>0$. Let $r\in\N$. The Kato--Milne cohomology group $H^{r+1}(k,\,\mathbb{Q}_p/\Z_p(r))$ was originally defined by using technical theories such as Bloch's groups \cite{Bloch77PubIHES} and de Rham--Witt theory (cf. \cite{Illusie79}, \cite{KatoII80},  \cite{Milne76AnnSciENS}). A simpler definition via Galois cohomology of Milnor $K$-groups of the separable closure is explained in \cite[Appendix\;A]{Merkurjev03inGMScohinv}.

We note that $H^1(k,\,\mathbb{Q}_p/\Z_p)$ is nothing but the $p$-primary torsion part of the usual Galois cohomology group $H^1(k,\,\mathbb{Q}/\Z)$ (with the Galois action on $\mathbb{Q}/\Z$ being trivial), and that $H^2(k,\,\mathbb{Q}_p/\Z_p(1))$ may be identified with the $p$-primary torsion part $\Br(k)[p^{\infty}]$ of the Brauer group $\Br(k)$.

Most useful in this paper is the  $p$-torsion part of $H^{r+1}(k,\,\mathbb{Q}_p/\Z_p(r))$, which we denote by $H^{r+1}(k,\,\Z/p\Z(r))$ or $H^{r+1}_p(k)$ and which we can describe easily using differential forms.  Let $\Omega^r_k$ denote the $r$-th exterior power (over $k$) of the space of absolute differential forms $\Omega_k:=\Omega_{k/\Z}$ (with the convention $\Omega^0_k=k$). Let $B^r_k$ be the image of the exterior differential map
$\mathrm{d}: \Omega^{r-1}_k\lra \Omega^r_k$ if $r\ge 1$ and put $B^0_k=0$. We have  the Artin--Schreier map
\[
\wp\;:\;\;\Omega^{r}_k\lra \Omega^{r}_k/B^r_k\;;\quad b\frac{\mathrm{d} a_1}{a_1}\wedge\cdots \wedge\frac{\mathrm{d}a_r}{a_r}\longmapsto (b^p-b)\frac{\mathrm{d} a_1}{a_1}\wedge\cdots \wedge\frac{\mathrm{d}a_r}{a_r}\;
\]and we may define
\[
H^{r+1}_p(k)=H^{r+1}(k,\,\Z/p\Z(r)):=\mathrm{coker}\left(\wp\;:\;\;\Omega^{r}_k\lra \Omega^{r}_k/B^r_k\right)\,.
\]
Under the natural identification $H^2_p(k)=\Br(k)[p]$, the differential form $b\frac{\mathrm{d}a}{a}$ corresponds to the Brauer class of the \emph{symbol $p$-algebra} $[b,\,a)_p$, the $k$-algebra generated by two elements $x,\,y$ subject to the relations
\[
x^p-x=b\,,\;y^p=a\quad\text{and }\; yx=(x+1)y\,.
\]
The cup product
\[
\cup\;:\;\;H^2_p(k)\times \left(k^*/k^{*p}\right)\lra H^3_p(k)\subseteq H^3(k,\,\mathbb{Q}_p/\Z_p(2))
\]satisfies
\[
\left(b\frac{\mathrm{d}a}{a}\right)\cup (\lambda)=b\frac{\mathrm{d}a}{a}\wedge\frac{\mathrm{d}\lambda}{\lambda}
\]for all $b\frac{\mathrm{d}a}{a}\in H^2_p(k)$ and $\lambda\in k^*$.

\

\newpara\label{5p2nnn}
Now let $F$ be a henselian excellent discrete valuation field with residue field $k$ of characteristic $\car(k)=p>0$. (Here $F$ may have characteristic $0$ or $p$, {and by saying $F$ is excellent we mean that its valuation ring is excellent}.) We define
\[
  H^{r+1}_p(F)_{tr}:=\ker\big(H^{r+1}(F,\,\Z/p\Z(r))\lra H^{r+1}(F_{nr}\,,\,\Z/p\Z(r))\big)\,,
\]where $F_{nr}$ denotes the maximal unramified extension of $F$. There is a natural inflation map (cf. \cite[p.659, $\S$3.2, Definition\;2]{KatoII80})
\[
  \mathrm{Inf}\;:\;\; H^{r+1}_p(k)\lra H^{r+1}_p(F)
\]such that
\[
  \mathrm{Inf}\left(b\frac{\mathrm{d}a_1}{a_1}\wedge\cdots\wedge\frac{\mathrm{d}a_r}{a_r}\right)=\tilde{b}\cup (\tilde{a}_1)\cup\cdots\cup (\tilde{a}_r)
\]where $\tilde{a}_i$ is any lifting of $a_i$, and $\tilde{b}\in H^1(F,\,\Z/p\Z)$ denotes the canonical lifting (i.e. inflation) of the character $\chi_b\in H^1(k,\,\Z/p)$ corresponding to the Artin--Schreier extension $k[T]/(T^p-T-b)$ of $k$. The choice of a uniformizer $\pi\in F$ defines a homomorphism (cf. \cite[p.659, $\S$3.2, Lemma\;3]{KatoII80})
\[
h_{\pi}\,:\;\; H^r_p(k)\lra H^{r+1}_p(F)\;;\quad w\longmapsto \mathrm{Inf}(w)\cup (\pi)\,.
\]The images of $\mathrm{Inf}$ and $h_{\pi}$ are both contained in $H^{r+1}_p(F)_{tr}$. It is proved in \cite[p.219, Thm.\;3]{Kato82} that the above two maps induces an isomorphism
\[
  \mathrm{Inf}\oplus h_{\pi}\;:\;\; H^{r+1}_p(k)\oplus H^{r}_p(k)\simto H^{r+1}_p(F)_{tr}\;.
\]
We can thus define a \emph{residue map} $\partial:H^{r+1}_p(F)_{tr}\to H^r_p(k)$, which fits into the following split exact sequence
\begin{equation}\label{5p2p1}
  0\lra H^{r+1}_p(k)\overset{\mathrm{Inf}}{\lra}H^{r+1}_p(F)_{tr}\overset{\partial}{\lra} H^r_p(k)\lra 0\,.
\end{equation}Moreover, we have analogs of the formulae \eqref{4p1p1} and \eqref{4p1p2} for the residue maps.

Let us specialize to the case $r=1$. Then $H^{r+1}_p(k)=H^{2}_p(k)$ is the $p$-torsion subgroup $\Br(k)[p]$ of $\Br(k)$ and similarly for $H^2_p(F)$. So in this case \eqref{5p2p1} reads
\begin{equation}\label{5p2p2}
  0\lra \Br(k)[p]\overset{\mathrm{Inf}}{\lra} \Br_{tr}(F)[p]\overset{\partial}{\lra} H^1(k,\,\Z/p)\lra 0\,,
\end{equation}where $\Br_{tr}(F)[p]:=\ker(\Br(F)[p]\lra \Br(F_{nr})[p])$  is identified with
\[
H^2_p(F)_{tr}=\ker(H^2_p(F)\lra H^2_p(F_{nr}))\,.
\]

We remark that \eqref{5p2p2} can be extended to  $p$-primary torsion subgroups and pieced together with \eqref{4p1p3} to give the following exact sequence
\[
  0\lra \Br(k)\lra \Br_{tr}(F)\lra H^1(k,\,\mathbb{Q}/\Z)\lra 0\,,
\]where
\[
\Br_{tr}(F):=\ker(\Br(F)\lra \Br(F_{nr}))\,
\]is referred to as the \emph{tame} or \emph{tamely ramified}  part of $\Br(F)$. (Perhaps a more appropriate name for this subgroup is the \emph{inertially split} part of $\Br(F)$, according to the terminology of \cite[$\S$5]{JW90} and \cite[$\S$3]{Wadsworth02}. In fact, the tame part and the inertially split part  can be defined for any henselian valued field. Fortunately, these two notions make no difference in our situation, i.e., for discrete valuation fields.)

\

Now we state our main result in this section. {For the construction and main properties of the corestriction maps needed below, we refer the reader to \cite[p.658]{KatoII80}.}

\begin{thm}\label{5p3nnn}
  Let $F$ be a henselian excellent discrete valuation field with residue field $k$ of characteristic $\car(k)=p>0$.  Suppose that the following properties hold for every finite cyclic extension $L/k$  of degree $1$ or $p$:
  \begin{enumerate}
  \item (\emph{Period equals index}) Every Brauer class of period $p$ over $L$ has index $p$.
    \item ($H^3$-\emph{corestriction injectivity})  The corestriction map
  \[
  \Cor_{L/k}\,:\;\;H^3_p(L)\lra H^3_p(k)
  \]is injective.
  \end{enumerate}

  Then for every $\al\in\Br_{tr}(F)[p]$ one has
  \[
  \{\lambda\in F^*\,|\,\al\cup (\lambda)=0\;\in\;H^3_p(F)\}=F^*{}^p\cdot\Nrd(\al)\,.
  \]
\end{thm}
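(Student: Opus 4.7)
The plan is to mirror, in the characteristic $p$ setting, the proof of Theorem~\ref{1p4nnn} given at the beginning of Section~\ref{sec4}, with the residue machinery of \eqref{4p1nnn} replaced by its tame counterpart from \eqref{5p1nnn}--\eqref{5p2nnn}. Fix a uniformizer $\pi\in F$. Since $\alpha\in\Br_{tr}(F)[p]$, the split exact sequence \eqref{5p2p2} provides a decomposition $\alpha=\alpha'+h_\pi(\chi_0)$, where $\alpha'=\mathrm{Inf}(\bar\alpha')$ is unramified (inflated from $\bar\alpha'\in\Br(k)[p]$) and $\chi_0\in H^1(k,\Z/p\Z)$ corresponds to a cyclic Artin--Schreier extension $E_0/k$ of degree $1$ or $p$; let $E/F$ be its unramified lift. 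From \eqref{5p2p1} and the explicit formulas for $\mathrm{Inf}$ and $h_\pi$ one derives characteristic-$p$ analogs of \eqref{4p1p1} and \eqref{4p1p2} for the residue $\partial\colon H^3_p(F)_{tr}\to H^2_p(k)$, and hence: writing $\lambda=\theta\pi^r$ with $\theta\in U_F$, the vanishing $\alpha\cup(\lambda)=0$ forces $r\alpha=(E/F,(-1)^r\lambda)$ in $\Br(F)$.

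I next establish the characteristic-$p$ analog of Lemma~\ref{4p2nnn}: if $r=v_F(\lambda)$ is prime to $p$, a standard B\'ezout reduction gives $\alpha=(E/F,-\lambda)$, a cyclic algebra of index $\le p$. The degree-$p$ extension $L=F(\sqrt[p]{-\lambda})$---purely inseparable when $\car(F)=p$, a possibly non-Galois radical extension when $\car(F)=0$---splits $\alpha$ because $-\lambda$ becomes a $p$-th power in $L$, making the cyclic class $p$-divisible and hence zero. The norm identity $N_{L/F}(\sqrt[p]{-\lambda})=(-1)^p\lambda$ combined with $(-1)^p=\Nrd_D(-1_D)$ (for $D$ of degree $p$) yields $\lambda\in\Nrd(\alpha)$. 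An analog of Corollary~\ref{4p3v2} then reduces the proof to the unit case: every $\lambda\in U_F$ with $\alpha\cup(\lambda)=0$ lies in $\Nrd(\alpha)$, and a fortiori in $F^{*p}\cdot\Nrd(\alpha)$.

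For such a unit $\lambda$, the analog of \eqref{4p1p6} gives $(E/F,\lambda)=0$ in $\Br(F)[p]$, so Hilbert~90 for cyclic algebras and a valuation check produce $\lambda=N_{E/F}(\mu)$ with $\mu\in U_E$. Since $\alpha_E=\alpha'_E$ is unramified, the projection formula and the injectivity of $\mathrm{Inf}$ on $H^3_p$ from \eqref{5p2p1} give $\Cor_{E_0/k}(\bar\alpha'_{E_0}\cup\bar\mu)=0$ in $H^3_p(k)$. Hypothesis~(2) then yields $\bar\alpha'_{E_0}\cup\bar\mu=0$, and hypothesis~(1) combined with Example~\ref{1p1nnn}(1) (Gille's $p$-primary Merkurjev--Suslin) gives $\bar\mu\in\Nrd(\bar\alpha'_{E_0})$. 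Lifting $\bar\mu=\Nrd(\bar d)$ via an Azumaya algebra $\cA$ over $\mathcal{O}_E$ representing $\alpha'_E$, I obtain $\mu=\Nrd(d)\cdot u$ with $d\in\cA^*$ and $u\in U_E^1$. The key technical input will be the inclusion $U_E^1\subseteq\Nrd(\cA^*)\subseteq\Nrd(\alpha'_E)$: the reduced trace $\mathrm{Trd}\colon\cA\to\mathcal{O}_E$ is surjective by non-degeneracy of the Azumaya trace pairing, so the first-order expansion $\Nrd(1+\pi a)\equiv 1+\pi\,\mathrm{Trd}(a)\pmod{\pi^2\mathcal{O}_E}$ together with henselian successive approximation shows every principal unit of $E$ is a reduced norm. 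Thus $\mu\in\Nrd(\alpha'_E)$, and $\lambda=N_{E/F}(\mu)\in\Nrd(\alpha)$ via the standard compatibility $N_{E/F}(\Nrd(\alpha'_E))\subseteq\Nrd(\alpha)$. The hardest step is precisely this absorption of $U_E^1$ into $\Nrd(\alpha'_E)$: in the tame case of Section~\ref{sec4} it came for free from Hensel's $U_F^1\subseteq F^{*\ell^n}$, but in characteristic $p$ Hensel fails on $p$-th powers, and the Azumaya reduced-norm calculation just sketched is needed as a substitute.
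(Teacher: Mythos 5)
Your proposal is correct, and up to the last step it runs parallel to the paper's own proof: the same decomposition $\al=\al'+(E/F,\pi)$ via \eqref{5p2p2}, the same reduction (B\'ezout plus the residue computation) to $v_F(\lambda)\in\{0,1\}$, the same treatment of the ramified case through $F(\sqrt[p]{-\lambda})$, and the same use of the projection formula together with the $H^3$-corestriction injectivity to obtain $\ov{\al}'_{E_0}\cup(\ov{\mu})=0$ over the residue field $E_0$. The divergence is in the endgame. The paper lifts this vanishing back up to $E$ (injectivity of Kato's inflation on $H^3_p$), observes that the unramified class $\al_E$ has the same index as $\ov{\al}'_{E_0}$, hence index $\le p$ by hypothesis (1), and applies Gille's theorem (Example\;\ref{1p1nnn}(1)) over $E$ itself to get $\mu\in\Nrd(\al_E)$, so no Azumaya-algebra lifting is needed. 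You instead apply Gille over $E_0$ and then transport the reduced-norm property from $E_0$ to $E$ via an Azumaya representative $\cA$ of $\al'_E$ and the inclusion $U_E^1\subseteq\Nrd(\cA^*)$. That inclusion is true and is exactly the right substitute for the failing ``Hensel for $p$-th powers'' (in effect a characteristic-$p$-safe, $\Nrd$-version of the second equality in \eqref{4p4p2v2}); your trace-expansion is the correct mechanism, but over a merely henselian (excellent) ring the literal successive approximation should be packaged as Hensel's lemma for the smooth morphism $\Nrd\colon GL_1(\cA)\to\mathbb{G}_m$ (smooth because its differential at $1$ is the surjective reduced trace $\mathrm{Trd}$), or handled by Artin approximation. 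So your route costs two extra standard facts (existence of an Azumaya representative of the unramified class with the prescribed residue class, and that units with reduced-norm residues are reduced norms), while the paper's costs the equally standard fact that an unramified class over a henselian field has the same index as its residue class; both arguments are valid and use hypotheses (1) and (2) in the same way.
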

\begin{proof}
  The basic strategy is identical to that of the proof of Theorem\;\ref{1p4nnn}. A slight difference is that we now assume the period-index condition to strengthen the Rost-divisibility hypothesis. This is needed to overcome the difficulty that the method of lifting $p$-th powers using Hensel's lemma fails.

  The tameness assumption on $\al$ allows us to consider its residue $\partial(\al)\in H^1(k,\,\Z/p)$, which we represent by a finite cyclic extension $E_0/k$ of degree 1 or $p$. We write $E/F$ for the unramified extension with residue field extension $E_0/k$. We fix a uniformizer $\pi\in F$ and use the split exact sequence \eqref{5p2p2} to write
  \[
  \al=\al'+(E/F\,,\,\pi)\quad\text{with }\al'\in \Br(F)[p]\;\text{ unramified},
  \]as an analogue of \eqref{4p1p4}. (Here we call a Brauer class $\beta\in\Br(F)[p]$ \emph{unramified} if $\beta\in \Br_{tr}(F)[p]$ and if $\partial(\beta)=0$. Equivalently, an unramified element of $\Br(F)[p]$ is an element in the image of the inflation map $\Br(k)[p]\to \Br_{tr}(F)[p]$.)

  Let $\lambda\in F^*$ be such that $\al\cup (\lambda)=0\in H^3_p(F)$. We want to show $\lambda\in F^{*p}\cdot\Nrd(\al)$. Replacing $\lambda$ with $\lambda^s\pi^{pc}$ for suitable integers $s,\,c\in\Z$ if necessary, we may assume that $v_F(\lambda)=0$ or $1$.

  If $v_F(\lambda)=1$, computing the residue of $\al\cup(\lambda)$ we see that $\al=(E/F,\,-\lambda)$ (cf. \eqref{4p1p6}). Thus $\al$ is split over $L:=F(\sqrt[p]{-\lambda})$ and
  \[
  (-1)^p\lambda=N_{L/F}(\sqrt[p]{-\lambda})\in N_{L/F}(L^*)\subseteq \Nrd(\al)\,.
  \]This shows $\lambda\in F^{*p}\cdot \Nrd(\al)$ as desired.

  Now assume $v_F(\lambda)=0$. As in the proof of Lemma\;\ref{4p4nnn}, we have $\lambda=N_{E/F}(\mu)$  for some unit $\mu$ in $E$ such that $\al_E\cup(\mu)=0\in H^3_p(E)$. Since $\al_E=\al'_E$ is unramified, the period-index assumption implies that $\al_E$ has index $p$ or $1$. By \cite[p.94, Thm.\;6]{Gille00}, $\mu$ is a reduced norm for $\al_E$ and it follows that $\lambda\in N_{E/F}(\mu)\in\Nrd(\al)$. This completes the proof.
\end{proof}

\begin{remark}\label{5p4nnn}
For a field $k$ of characteristic $p$, there are two useful variants of the cohomological $p$-dimension $\cd_p(k)$: the \emph{separable $p$-dimension} $\sd_p(k)$ (\cite[p.62]{Gille00}) and \emph{Kato's $p$-dimension} $\dim_p(k)$ (\cite[p.220]{Kato82}). They are defined as follows:
\[
\begin{split}
\sd_p(k)&:=\inf\{r\in\N\,|\,H^{r+1}_p(k')=0\,,\;  \text{for all finite separable extension } k'/k\}\,,\\
\dim_p(k)&:=\inf\{r\in\N\,|\,[k:k^p]\le p^r\text{ and } H^{r+1}_p(k')=0\,,\; \text{for all finite extension } k'/k\}\,.
\end{split}
\]
(In characteristic different from $p$, both the separable $p$-dimension and Kato's $p$-dimension are defined to be the same as the cohomological $p$-dimension.)

It is easy to see that $\sd_p(k)\le \dim_p(k)$. Moreover, it can be shown that
\[
\log_p[k:k^p]\le \dim_p(k)\le \log_p[k:k^p]+1\,.
\] Therefore, we have the following implications:
\[
\dim_p(k)\le 1 \Longrightarrow [k:k^p]\le p \Longrightarrow \dim_p(k)\le 2\Longrightarrow \sd_p(k)\le 2\,.
\]

Notice that Condition 2 in Theorem\;\ref{5p3nnn} is true when $\sd_p(k)\le 2$, and Condition 1 holds if $[k:k^p]\le p$ by a theorem of Albert (cf. \cite[Lemma\;9.1.7]{GilleSzamuely17}).

If in Theorem\;\ref{5p3nnn} we assume $\dim_p(k)\le 1$, then Condition 1 also holds, but the theorem  is not new. In fact, even more is true in that case. This is because we have $\dim_p(F)\le 2$ by \cite[Thm.\;3.1]{KatoKuzumaki}. In particular,  $\sd_p(F)\le 2$. So, by \eqref{1p1nnn} (2), $F$ is Rost $p^{\infty}$-divisible.
\end{remark}

We shall now show that the Rost $p$-divisibility of $F$ is still true when we only assume $[k:k^p]\le p$. This is based on the following result of Kato.

\begin{prop}[{\cite[p.337, $\S$3, Lemma\;5]{KatoI79}}]\label{5p5nnn}
  Let $F$ be a henselian excellent discrete valuation field with residue field $k$ of characteristic $p$. Assume that $[k:k^p]=p$. Then for every $\al\in \Br(F)[p]$ that is not in $\Br_{tr}(F)[p]$, one has $\ind(\al)=p$.
\end{prop}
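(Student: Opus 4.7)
The strategy is to realize $\al$ as a nonzero cyclic algebra of degree $p$, which is automatically a division algebra, forcing $\ind(\al)=p$. It therefore suffices to present $\al\in\Br(F)[p]\setminus\Br_{tr}(F)[p]$ by a single cyclic symbol of degree $p$.

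As a first reduction, in the mixed characteristic case I would adjoin a primitive $p$-th root of unity; this is a prime-to-$p$ base change and so preserves the period and the index of $p$-primary Brauer classes. After this reduction, $\Br(F)[p]$ is generated by Kummer symbols $(a,b)_{\zeta_p}$ in mixed characteristic and by Artin--Schreier symbols $[b,a)_p$ in equal characteristic (cf.\ the discussion in \eqref{5p1nnn}). Any nonzero single symbol of this form has index exactly $p$, so the task is to produce a single-symbol representative for $\al$.

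The essential input is Kato's filtration on $H^2_p(F)\cong\Br(F)[p]$ introduced in \cite{KatoII80} and \cite{Kato82}, whose ``shallowest'' graded piece is the tame part $\Br_{tr}(F)[p]$ (governed by the split sequence \eqref{5p2p2}) and whose deeper graded pieces are computed explicitly as quotients involving $\Omega^1_k$ and $k/k^p$, graded by the pole order in a uniformizer. Under the hypothesis $[k:k^p]=p$, both $\Omega^1_k$ and $k/k^p$ are one-dimensional over the relevant base, so each nontrivial graded piece is represented by a \emph{single} symbol with a prescribed pole order. The plan is then to argue by descending induction on the filtration level of $\al$: match $\al$ in its top graded piece with a single cyclic symbol, subtract it using Kato's symbol identities, and iterate. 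Since $\al$ is assumed non-tame, the process yields, at worst, a single symbol of positive wild depth, which splits $\al$ over a degree-$p$ Artin--Schreier (or Kummer) extension, giving $\ind(\al)\le p$; the nonvanishing of $\al$ then upgrades this to $\ind(\al)=p$.

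The main obstacle is the symbol-merging step: the difference of two single symbols is a priori a class of index up to $p^2$, and one must use the structure forced by $[k:k^p]=p$ to rewrite this difference as a single symbol. Concretely, one exploits the 1-dimensionality of $\Omega^1_k$ over $k$ to realign the ``$a$-slot'' of all symbols appearing in the decomposition, which reduces the merging to an identity in a single cyclic algebra of degree $p$. This is the technical core of Kato's original proof in \cite{KatoI79}, and once it is in place the single-symbol presentation of $\al$ delivers the desired index bound.
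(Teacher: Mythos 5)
You should first note that the paper does not prove Proposition \ref{5p5nnn} at all: it is quoted directly from Kato (\cite[p.337, $\S$3, Lemma\;5]{KatoI79}), so there is no internal argument to compare yours against. Judged as a standalone proof, your proposal has a genuine gap. The framework you describe --- reduce by a prime-to-$p$ base change to a situation where degree-$p$ symbols generate $\Br(F)[p]$, then use Kato's ramification filtration on $H^2_p(F)$, whose graded pieces beyond the tame part \eqref{5p2p2} are expressed through $\Omega^1_k$ and $k/k^p$ and become ``one symbol wide'' when $[k:k^p]=p$ --- is the right setting, but the decisive step is exactly the one you postpone: showing that a wildly ramified class is \emph{equal} to a single degree-$p$ symbol, not merely congruent to one modulo deeper levels of the filtration, i.e., that the symbol produced at each level can actually be merged with what remains. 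You say yourself that this merging is ``the technical core of Kato's original proof,'' so the proposal is a plan whose crucial lemma is the very statement being proved; nothing in it excludes, for instance, that your descending induction terminates with a sum of two symbols of index $p^2$.

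If you wanted to carry the plan out, several subsidiary points would also need attention. In the mixed-characteristic reduction you must check that $F(\zeta_p)$ is again a henselian excellent discrete valuation field whose residue field still satisfies $[k':k'^p]=p$ (true, since the residue extension is finite separable), and that $\al$ stays outside the tame part after this extension (true by restriction--corestriction over $F_{nr}$, because $[F(\zeta_p)_{nr}:F_{nr}]$ is prime to $p$); neither verification appears in the proposal. The claim that $\Br(F(\zeta_p))[p]$ is generated by Kummer symbols invokes Merkurjev--Suslin, a far heavier input than anything used in \cite{KatoI79}, whereas Kato's argument works directly with the explicit description of $H^2_p$ of a complete discrete valuation field. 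Finally, the hypothesis $\al\notin\Br_{tr}(F)[p]$ is not a cosmetic nonvanishing assumption: for tame classes with $[k:k^p]=p$ the index can genuinely be $p^2$ (take $\al'$ unramified with residue class of index $p$ that survives over the residue extension $E_0$ of the ramified cyclic part), so your induction must be shown to stop strictly before the tame level, and that is again part of the content of Kato's lemma rather than a formality.
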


\begin{coro}\label{5p6nnn}
   Let $F$ be a henselian excellent discrete valuation field with residue field $k$ of characteristic $p$.

   If $[k:k^p]\le p$ (e.g. $k=k_0(x)$ or $k_0(\!(x)\!)$ for some perfect field $k_0$), then $F$ is Rost $p$-divisible.
\end{coro}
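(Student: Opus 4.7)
The plan is to handle two cases separately, according to whether $[k:k^p] = 1$ or $[k:k^p] = p$; in the latter, I will further split according to whether a given Brauer class is tame or wildly ramified at $v_F$.

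If $[k:k^p] = 1$, i.e., $k$ is perfect, then $\dim_p(k) = 0$, so by the Kato--Kuzumaki bound cited in Remark~\ref{5p4nnn} we have $\dim_p(F) \le 2$ and hence $\sd_p(F) \le 2$. Example~\ref{1p1nnn}(2) then delivers Rost $p^\infty$-divisibility (and in particular Rost $p$-divisibility) of $F$ with no further work.

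The substantive case is $[k:k^p] = p$. Fix $\alpha \in \Br(F)[p]$. If $\alpha \notin \Br_{tr}(F)[p]$, i.e., $\alpha$ is wildly ramified, then Proposition~\ref{5p5nnn} gives $\ind(\alpha) = p$, so by Example~\ref{1p1nnn}(1) we have $\cR(\alpha) = \Nrd(\alpha) \subseteq \cS(\alpha) \subseteq \cR(\alpha)$, hence $\cR(\alpha) = \cS(\alpha)$. If instead $\alpha \in \Br_{tr}(F)[p]$ is tame, I will apply Theorem~\ref{5p3nnn}. To verify its period-equals-index hypothesis for every cyclic $L/k$ of degree $1$ or $p$, I use that such $L/k$ is separable (Artin--Schreier in the degree $p$ case), so $[L:L^p] = [k:k^p] \le p$, and Albert's theorem (\cite[Lemma~9.1.7]{GilleSzamuely17}) applies. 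To verify $H^3$-corestriction injectivity, the inequalities recalled in Remark~\ref{5p4nnn} give $\dim_p(k) \le \log_p[k:k^p] + 1 \le 2$, hence $\sd_p(k) \le 2$, which forces $H^3_p(L) = 0$ for every finite separable $L/k$; the corestriction is therefore trivially injective. Theorem~\ref{5p3nnn} then yields $\cR(\alpha) = F^{*p} \cdot \Nrd(\alpha) = \cS(\alpha)$ in the tame case.

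There is no serious obstacle: the technical content has been front-loaded into Proposition~\ref{5p5nnn} (controlling the index of wildly ramified $p$-torsion classes via $[k:k^p] = p$) and Theorem~\ref{5p3nnn} (handling tame classes under the two cohomological hypotheses). The only real bookkeeping is the observation that separability of $L/k$ preserves the bound $[L:L^p] \le p$, so that both Albert's theorem and the bound $\sd_p(L) \le 2$ survive passage to $L$, and that the tame-versus-wild dichotomy matches cleanly with the two available tools.
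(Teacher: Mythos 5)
Your proof is correct and follows essentially the same route as the paper, whose proof of this corollary is exactly the combination of Theorem~\ref{5p3nnn}, Remark~\ref{5p4nnn} and Proposition~\ref{5p5nnn} that you spell out: tame classes via Theorem~\ref{5p3nnn} with its hypotheses verified through Albert's theorem and $\sd_p(k)\le 2$, wildly ramified classes via Proposition~\ref{5p5nnn} together with Example~\ref{1p1nnn}(1), and the perfect residue field case via $\dim_p(F)\le 2$. The only blemish is your claim that $\dim_p(k)=0$ for perfect $k$, which is false in general (e.g.\ $H^1_p(\mathbb{F}_p)\neq 0$, so $\dim_p(\mathbb{F}_p)=1$); but perfectness does give $\dim_p(k)\le 1$, which is all the cited Kato--Kuzumaki bound needs to yield $\dim_p(F)\le 2$, so the argument stands.
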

\begin{proof}
  Combine \eqref{5p3nnn}, \eqref{5p4nnn} and \eqref{5p5nnn}.
\end{proof}

\begin{example}\label{5p7}
We give some further examples to which Theorem\;\ref{5p3nnn} applies, i.e., examples where the residue field $k$ satisfies the two conditions of the theorem.

We first note that by \cite[p.234, Prop.\;2 (2)]{KatoKuzumaki} (or \cite{ArasonBaeza2010}), if $k$ is a $C_2$-field of characteristic $p$, then $\dim_p(k)\le 2$, so that Condition 2 of \ref{5p3nnn} holds by Remark\;\ref{5p4nnn}. If $p=2$, Condition 1 is true for any $C_2$-field $k$ of characteristic $2$. (The period-index condition for 2-torsion Brauer classes can be shown easily by considering the Albert forms of biquaternion algebras.)

For general $p$, the field $k$ can be any of the following fields:

\begin{enumerate}
  \item $k$ is a field of transcendence degree 2 over an algebraically closed field (of characteristic $p$).

  In this case the period-index condition follows from \cite[Thm.\;4.2.2.3]{Lieb08}. The field $k$ is $C_2$ by the Tsen--Lang theorem.

  \item $k=k_0(\!(x)\!)$, where $k_0$ is a $C_1$-field of characteristic $p$.

  Here $k$ is $C_2$ by Greenberg's theorem (\cite{Greenberg66}). The ``period equals index'' property holds for wildly ramified classes in $\Br(k)[p]$, by Prop.\;\ref{5p5nnn}. For tame classes this follows from the split exact sequence \eqref{5p2p2} and the fact that $\Br(k_0)=0$.
  \item $k=\mathbb{F}(\!(x)\!)(\!(y)\!)$, where $\mathbb{F}$ is a finite field of characteristic $p$.

  The period-index condition follows from \cite[Corollary\;3.5]{AravireJacob95}. The injectivity of the corestriction map is proved in \cite[p.660, $\S$3.2, Prop.\;1]{KatoII80}.
\end{enumerate}

\end{example}

\section{A modified version of Suslin's conjecture}\label{sec6}

Our main results (Theorems\;\ref{1p4nnn} and \ref{1p6nnn}) provide new examples  of  fields of cohomological dimension 3 that satisfy Suslin's conjecture\;\ref{1p2nnn}. On the other hand,  there  exist counterexamples to Suslin's conjecture as we have said in the introduction. We shall now have a closer look at the known counterexamples.

\medskip

\newpara\label{6p1} Let $\ell$ be a prime. Let $k$ be a field of characteristic different from $\ell$ and suppose that $k$ contains a primitive $\ell$-th root of unity. Let $F=k(t_1,\dotsc, t_n)$ be a purely transcendental extension in $n$ variables over $k$. Let $a_1,\,\dotsc, a_n\in k^*$, $M_i=k(\sqrt[\ell]{a_i})$ and $L=M_1\cdots M_n=k(\sqrt[\ell]{a_1}\,,\dotsc, \sqrt[\ell]{a_n})$. Let $\al\in \Br(F)$ be the Brauer class of the tensor product $\otimes^n_{i=1}(a_i,\,t_i)$. By \cite[Prop.\;2.5]{Merkurjev95ProcSympos58}, a necessary condition for {$\cR(\al)=\cS(\al)$} is
\begin{equation}\label{6p1p1}
\bigcap^n_{i=1}N_{M_i/k}(M_i^*)=k^{*\ell}\cdot N_{L/k}(L^*)\,.
\end{equation}
For any odd prime $\ell$ and $n=2$, Merkurjev constructed in \cite[$\S$2]{Merkurjev95ProcSympos58} examples where \eqref{6p1p1} fails, whence an example with {$\cR(\al)\neq \cS(\al)$}.

If $\ell=2$ and $n=3$, \eqref{6p1p1} is encoded in the property $P_1(3)$ of fields defined and studied in \cite{Tignol81LNM} and \cite{ShapiroTignolWadsworth82JA} (see also \cite[p.741, Prop.\;3]{Gille97JA}). Thus, for a field $k$ that does not possess this property (cf. \cite[$\S$5]{ShapiroTignolWadsworth82JA}) one can find counterexamples to \eqref{6p1p1}.

Note however a simple observation: \eqref{6p1p1} holds trivially if the field $k$ satisfies $\cd_{\ell}(k)\le 1$. Therefore, in the above counterexamples we must have $\cd_{\ell}(k)>1$ and hence $\cd_{\ell}(F)>3$.

\

The above discussions together with the main results of this paper lead us to propose the following modification of Suslin's conjecture:

\begin{conj}\label{6p2}
  Let $\ell$ be a prime and $F$ a field with separable $\ell$-dimension $\sd_{\ell}(F)\le 3$ $($cf. Remark$\;\ref{5p4nnn})$. Then $F$ is Rost $\ell^{\infty}$-divisible.
\end{conj}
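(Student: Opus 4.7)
The plan is to reduce Conjecture~\ref{6p2} to the essential case of prime period $\ell$ and then attack that case by combining the K-cohomological framework underlying Suslin's original index$\,\le 4$ result with the valuation-theoretic techniques developed in this paper.

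First, by Proposition~\ref{3p4nnn} and the definition of Rost $\ell^\infty$-divisibility it suffices to establish Rost $\ell^n$-divisibility of $F$ for every $n \ge 1$, and by mimicking the inductive reduction carried out in \eqref{4p5nnn} one hopes to propagate the period-$\ell$ statement to higher periods. Concretely: given $\al \in \Br(F)[\ell^n]$ and $\lambda \in \cR(\al)$, one seeks a degree-$\ell$ extension $L/F$ and $\mu \in L^*$ with $N_{L/F}(\mu) = \lambda$ and $\al_L\cup(\mu) = 0$; since $\sd_\ell(L) \le \sd_\ell(F) \le 3$, the period-$\ell^{n-1}$ case (by induction on period as in Lemma~\ref{4p4nnn} and the inductive-pair construction of \eqref{4p6nnn}) would then give $\mu \in \cS(\al_L)$ and hence $\lambda \in \cS(\al)$. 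The analogue of Lemma~\ref{4p8nnn} in this setting would have to produce the required degree-$\ell$ extension from the vanishing of $H^4(F,\,\Q_\ell/\Z_\ell(3))$ instead of from a residue-field computation.

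Second, the core is the period-$\ell$ case: one must show that every $\lambda \in F^*$ with $\al\cup(\lambda)=0$ in $H^3(F)$ already lies in $\Nrd(\al)\cdot F^{*\ell}$. The natural device is the Severi--Brauer variety $X = \mathrm{SB}(D_\al)$ together with its coniveau / Brown--Gersten--Quillen spectral sequence and the Merkurjev--Suslin exact sequence (cf.\;\cite[(1.6), (1.10)]{Merkurjev95ProcSympos58}). In that framework the quotient $\cR(\al)/\cS(\al)$ is identified with a subquotient built out of Galois cohomology groups of degree $\ge 4$; under the hypothesis $\sd_\ell(F) \le 3$ these groups vanish over every finite separable extension of $F$, which should force the equality $\cR(\al) = \cS(\al)$. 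The structural reason to believe this is exactly the observation in \eqref{6p1} that all known counterexamples of Merkurjev and of Tignol--Shapiro--Wadsworth require $\cd_\ell$ strictly greater than $3$.

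The main obstacle is that no currently available spectral-sequence identification of $\cR(\al)/\cS(\al)$ cleanly terminates in a group annihilated by the hypothesis $\sd_\ell(F) \le 3$; the differentials that need to vanish involve higher Milnor $K$-cohomology of $X$ whose control under a bound on $\sd_\ell(F)$ alone is not known. An auxiliary strategy would be to try to realize a general $F$ as the residue field of a henselian discrete valuation field and thereby invoke Theorem~\ref{1p4nnn} inductively, reducing the separable $\ell$-dimension by one at each step until reaching $\sd_\ell\le 2$ where Example~\ref{1p1nnn}(2) applies; the difficulty is that $\sd_\ell$ is not in general lowered by a well-chosen valuation without extra geometric hypotheses on $F$, so some form of resolution or de Jong--Gabber alteration input appears indispensable. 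I expect that a proof will require a new cohomological vanishing theorem for (generalized) Severi--Brauer varieties under a bound on the separable $\ell$-dimension of the base field, and this vanishing is the hardest step.
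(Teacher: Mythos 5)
Be careful about the status of the statement you are trying to prove: in the paper, \ref{6p2} is a \emph{conjecture}, not a theorem, and the paper contains no proof of it. What the paper offers is evidence (Theorems~\ref{1p4nnn} and \ref{1p6nnn} for henselian discrete valuation fields with well-behaved residue fields, plus the observation in \eqref{6p1} that all known counterexamples force $\cd_{\ell}(F)>3$) and one known sufficient condition: for $\ell\neq\car(F)$ the conjecture would follow if the Chow groups $\mathrm{CH}^d(X)$, $d\ge 3$, of the relevant Severi--Brauer varieties were torsion free (\cite[Prop.\;1.11]{Merkurjev95ProcSympos58}). So there is no ``paper proof'' for your proposal to match, and your text, by your own admission, is a research plan rather than a proof.

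The gaps you flag are genuine and they are exactly where the whole difficulty sits. First, the period-$\ell$ core: you assert that in the $K$-cohomological framework the quotient $\cR(\al)/\cS(\al)$ ``is identified with a subquotient built out of Galois cohomology groups of degree $\ge 4$,'' but no such identification is available; the known description involves torsion in Chow groups (equivalently, differentials in the BGQ spectral sequence for $X=\mathrm{SB}(D_{\al})$) which is not controlled by a bound on $\sd_{\ell}(F)$ alone, so the hypothesis $\sd_{\ell}(F)\le 3$ does not by itself annihilate anything. Second, your proposed induction on the period cannot be transplanted from $\S$4 to a general field: the inductive-pair construction of \eqref{4p5nnn}--\eqref{4p8nnn} uses in an essential way that $F$ is a henselian discrete valuation field whose residue field $k$ satisfies $\cd_{\ell}(k)\le 2$ and $\mu_{\ell^n}\subseteq k$ (Lemma~\ref{4p8nnn} is a computation entirely inside $k$ and a further residue field of $k$); there is no mechanism, and none is proposed in the paper, for producing the required degree-$\ell$ extension ``from the vanishing of $H^4(F,\,\Q_{\ell}/\Z_{\ell}(3))$'' over an arbitrary field with $\sd_{\ell}(F)\le 3$. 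Likewise, an arbitrary such field need not carry a discrete valuation whose residue field has smaller separable $\ell$-dimension, so the auxiliary strategy of iterating Theorem~\ref{1p4nnn} does not get off the ground. In short: your reductions (Prop.~\ref{3p4nnn}, reduction to prime-power period, the wish to induct on period) are sensible framing, but the central vanishing statement is missing, and the statement remains open both in your write-up and in the paper.
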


As a related question, one may ask whether every $C_3$-field is Rost divisible.

When $\ell\neq\car(F)$, Conjecture\;\ref{6p2} is true if for every Severi--Brauer variety $X$ associated to an $\ell$-power degree division algebra over $F$, the Chow groups $\mathrm{CH}^d(X),\,d\ge 1$ are all torsion free (cf. \cite[Prop.\;1.11]{Merkurjev95ProcSympos58}. In fact, it suffices to assume $\mathrm{CH}^d(X)$ is torsion free for $d\ge 3$.)

\medskip

{As we have mentioned in the introduction, in cohomological dimension 3 it may happen that the Rost kernel is strictly larger than the group of reduced norms. For example, it is shown in the proof of \cite[Theorem\;4]{Merkurjev91} that there exists a field $k$ of characteristic 0 and of cohomological dimension 2 such that some biquaternion algebra $D_0$ over $k$ is a division algebra. If $F=k(\!(t)\!)$ and $\al\in\Br(F)$ is the Brauer class of $D_0\otimes_kF$, then $F$ has cohomological dimension 3, the element $t^2\in F^*$ lies in the Rost kernel of $\al$, but it is not a reduced norm for $\al$ (\cite[Remark\;5.1]{CTPaSu}).}

\begin{remark}\label{6p3}
The rational function field $k=\C(x,\,y)$ does not have the property $P_1(3)$ (\cite[Cor.\;5.6]{ShapiroTignolWadsworth82JA}), so one can find quadratic extensions $M_i=k(\sqrt{a_i}),\,i=1,\,2,\,3$ such that \eqref{6p1p1} fails. Similar to \cite[Prop.\;2.5]{Merkurjev95ProcSympos58}, for the field $K=k(\!(t_1)\!)(\!(t_2)\!)(\!(t_3)\!)$ and the Brauer class $\al$ of $\otimes^3_{i=1}(a_i,\,t_i)$, one has {$\cR(\al)\neq \cS(\al)$} (cf. \cite[p.283]{KnusLamShapiroTignol95}).

So, the field $K=k(\!(t_1)\!)(\!(t_2)\!)(\!(t_3)\!)$, with $k=\C(x,\,y)$, is not Rost 2-divisible. This shows that Theorem\;\ref{1p4nnn} can be false if we drop the $H^3$-corestriction injectivity assumption, for otherwise it would imply by  induction that the field $K$ here is Rost 2-divisible.
\end{remark}

\

\noindent \emph{Acknowledgements.} This work is motivated by a question which Yong Hu heard from Mathieu Florence at a conference held in Florence, Italy, in December 2017. {We are indebted to an anonymous referee for valuable comments. In particular, establishing a result in the form of Theorem\;\ref{4p9v2} is kindly suggested by her/him.} The first author  is also grateful to Philippe Gille for helpful discussions. Yong Hu is supported by a grant from the National Natural Science Foundation of China (Project No. 11801260). Zhengyao Wu is supported by  Shantou University Scientific Research Foundation for Talents (Grant No. 130-760188) and the National Natural Science Foundation of China (Project No. 11701352).

\addcontentsline{toc}{section}{\textbf{References}}

\bibliographystyle{alpha}

\bibliography{Rost}

\def\cprime{$'$}
\begin{thebibliography}{KMRT98}

\bibitem[AB10]{ArasonBaeza2010}
Jon~Kristinn Arason and R.~Baeza.
\newblock La dimension cohomologique des corps de type {${\bf C}_{\rm r}$} en
  caract\'eristique {${\bf p}$}.
\newblock {\em C. R. Math. Acad. Sci. Paris}, 348(3-4):125--126, 2010.

\bibitem[AJ95]{AravireJacob95}
Roberto Aravire and Bill Jacob.
\newblock {$p$}-algebras over maximally complete fields.
\newblock In {\em {$K$}-theory and algebraic geometry: connections with
  quadratic forms and division algebras ({S}anta {B}arbara, {CA}, 1992)},
  volume~58 of {\em Proc. Sympos. Pure Math.}, pages 27--49. Amer. Math. Soc.,
  Providence, RI, 1995.
\newblock With an appendix by Jean-Pierre Tignol.

\bibitem[Art82]{Artin82BrauerSeveri}
M.~Artin.
\newblock Brauer-{S}everi varieties.
\newblock In {\em Brauer groups in ring theory and algebraic geometry
  ({W}ilrijk, 1981)}, volume 917 of {\em Lecture Notes in Math.}, pages
  194--210. Springer, Berlin-New York, 1982.

\bibitem[Bae82]{Baeza82}
R.~Baeza.
\newblock Comparing {$u$}-invariants of fields of characteristic {$2$}.
\newblock {\em Bol. Soc. Brasil. Mat.}, 13(1):105--114, 1982.

\bibitem[Blo77]{Bloch77PubIHES}
Spencer Bloch.
\newblock Algebraic {$K$}-theory and crystalline cohomology.
\newblock {\em Inst. Hautes \'{E}tudes Sci. Publ. Math.}, (47):187--268 (1978),
  1977.

\bibitem[CTPS12]{CTPaSu}
Jean-Louis Colliot-Th{\'e}l{\`e}ne, Raman Parimala, and Venapally Suresh.
\newblock Patching and local-global principles for homogeneous spaces over
  function fields of p-adic curves.
\newblock {\em Comment. Math. Helv.}, 87(4):1011--1033, 2012.

\bibitem[EKM08]{EKM08}
Richard Elman, Nikita Karpenko, and Alexander Merkurjev.
\newblock {\em The algebraic and geometric theory of quadratic forms},
  volume~56 of {\em American Mathematical Society Colloquium Publications}.
\newblock American Mathematical Society, Providence, RI, 2008.

\bibitem[EL73]{ElmanLam73Invent}
Richard Elman and T.~Y. Lam.
\newblock Quadratic forms and the {$u$}-invariant. {II}.
\newblock {\em Invent. Math.}, 21:125--137, 1973.

\bibitem[Gil97]{Gille97JA}
Philippe Gille.
\newblock Examples of non-rational varieties of adjoint groups.
\newblock {\em J. Algebra}, 193(2):728--747, 1997.

\bibitem[Gil00]{Gille00}
Philippe Gille.
\newblock Invariants cohomologiques de {R}ost en caract\'eristique positive.
\newblock {\em $K$-Theory}, 21(1):57--100, 2000.

\bibitem[Gre66]{Greenberg66}
Marvin~J. Greenberg.
\newblock Rational points in {H}enselian discrete valuation rings.
\newblock {\em Inst. Hautes \'Etudes Sci. Publ. Math.}, (31):59--64, 1966.

\bibitem[GS17]{GilleSzamuely17}
Philippe Gille and Tam\'as Szamuely.
\newblock {\em Central simple algebras and {G}alois cohomology}, volume 165 of
  {\em Cambridge Studies in Advanced Mathematics}.
\newblock Cambridge University Press, Cambridge, 2017.
\newblock Second edition of [ MR2266528].

\bibitem[HHK09]{HHK}
David Harbater, Julia Hartmann, and Daniel Krashen.
\newblock Applications of patching to quadratic forms and central simple
  algebras.
\newblock {\em Invent. Math.}, 178(2):231--263, 2009.

\bibitem[Hu13]{Hu11}
Yong Hu.
\newblock Division algebras and quadratic forms over fraction fields of
  two-dimensional henselian domains.
\newblock {\em Algebra Number Theory}, 7(8):1919--1952, 2013.

\bibitem[Hu19]{Hu19}
Yong Hu.
\newblock Reduced norms of division algebras over complete discrete valuation
  fields of local-global type.
\newblock Journal of Algebra and its Applications, to appear, preprint
  available at arXiv:1902.06534, 2019.

\bibitem[Ill79]{Illusie79}
Luc Illusie.
\newblock Complexe de de\thinspace {R}ham-{W}itt et cohomologie cristalline.
\newblock {\em Ann. Sci. \'Ecole Norm. Sup. (4)}, 12(4):501--661, 1979.

\bibitem[JW90]{JW90}
Bill Jacob and Adrian Wadsworth.
\newblock Division algebras over {H}enselian fields.
\newblock {\em J. Algebra}, 128(1):126--179, 1990.

\bibitem[Kat79]{KatoI79}
Kazuya Kato.
\newblock A generalization of local class field theory by using {$K$}-groups.
  {I}.
\newblock {\em J. Fac. Sci. Univ. Tokyo Sect. IA Math.}, 26(2):303--376, 1979.

\bibitem[Kat80]{KatoII80}
Kazuya Kato.
\newblock A generalization of local class field theory by using {$K$}-groups.
  {II}.
\newblock {\em J. Fac. Sci. Univ. Tokyo Sect. IA Math.}, 27(3):603--683, 1980.

\bibitem[Kat82]{Kato82}
Kazuya Kato.
\newblock Galois cohomology of complete discrete valuation fields.
\newblock In {\em Algebraic {$K$}-theory, {P}art {II} ({O}berwolfach, 1980)},
  volume 967 of {\em Lecture Notes in Math.}, pages 215--238. Springer, Berlin,
  1982.

\bibitem[KK86]{KatoKuzumaki}
Kazuya Kato and Takako Kuzumaki.
\newblock The dimension of fields and algebraic {$K$}-theory.
\newblock {\em J. Number Theory}, 24(2):229--244, 1986.

\bibitem[KLST95]{KnusLamShapiroTignol95}
M.-A. Knus, T.~Y. Lam, D.~B. Shapiro, and J.-P. Tignol.
\newblock Discriminants of involutions on biquaternion algebras.
\newblock In {\em {$K$}-theory and algebraic geometry: connections with
  quadratic forms and division algebras ({S}anta {B}arbara, {CA}, 1992)},
  volume~58 of {\em Proc. Sympos. Pure Math.}, pages 279--303. Amer. Math.
  Soc., Providence, RI, 1995.

\bibitem[KMRT98]{KMRT}
Max-Albert Knus, Alexander Merkurjev, Markus Rost, and Jean-Pierre Tignol.
\newblock {\em The book of involutions}, volume~44 of {\em American
  Mathematical Society Colloquium Publications}.
\newblock American Mathematical Society, Providence, RI, 1998.
\newblock With a preface in French by J. Tits.

\bibitem[Kne69]{Kn69}
M.~Kneser.
\newblock {\em Lectures on {G}alois cohomology of classical groups}.
\newblock Tata Institute of Fundamental Research, Bombay, 1969.
\newblock With an appendix by T. A. Springer, Notes by P. Jothilingam, Tata
  Institute of Fundamental Research Lectures on Mathematics, No. 47.

\bibitem[Lam05]{Lam}
T.~Y. Lam.
\newblock {\em Introduction to quadratic forms over fields}, volume~67 of {\em
  Graduate Studies in Mathematics}.
\newblock American Mathematical Society, Providence, RI, 2005.

\bibitem[Lie08]{Lieb08}
Max Lieblich.
\newblock Twisted sheaves and the period-index problem.
\newblock {\em Compos. Math.}, 144(1):1--31, 2008.

\bibitem[Lie11]{Lieb07}
Max Lieblich.
\newblock Period and index in the {B}rauer group of an arithmetic surface.
\newblock {\em J. Reine Angew. Math.}, 659:1--41, 2011.
\newblock With an appendix by Daniel Krashen.

\bibitem[Lie15]{Lieblich15AnnMath}
Max Lieblich.
\newblock The period-index problem for fields of transcendence degree 2.
\newblock {\em Ann. of Math. (2)}, 182(2):391--427, 2015.

\bibitem[Mer91]{Merkurjev91}
A.~S. Merkur{\cprime}ev.
\newblock Simple algebras and quadratic forms.
\newblock {\em Izv. Akad. Nauk SSSR Ser. Mat.}, 55(1):218--224, 1991.

\bibitem[Mer95]{Merkurjev95ProcSympos58}
A.~S. Merkurjev.
\newblock Certain {$K$}-cohomology groups of {S}everi-{B}rauer varieties.
\newblock In {\em {$K$}-theory and algebraic geometry: connections with
  quadratic forms and division algebras ({S}anta {B}arbara, {CA}, 1992)},
  volume~58 of {\em Proc. Sympos. Pure Math.}, pages 319--331. Amer. Math.
  Soc., Providence, RI, 1995.

\bibitem[Mer03]{Merkurjev03inGMScohinv}
Alexander Merkurjev.
\newblock Rost invariants of simply connected algebraic groups.
\newblock In {\em Cohomological invariants in {G}alois cohomology}, volume~28
  of {\em Univ. Lecture Ser.}, pages 101--158. Amer. Math. Soc., Providence,
  RI, 2003.
\newblock With a section by Skip Garibaldi.

\bibitem[Mil76]{Milne76AnnSciENS}
J.~S. Milne.
\newblock Duality in the flat cohomology of a surface.
\newblock {\em Ann. Sci. \'{E}cole Norm. Sup. (4)}, 9(2):171--201, 1976.

\bibitem[MMW91]{MammoneMoresiWadsworth91}
P.~Mammone, R.~Moresi, and A.~R. Wadsworth.
\newblock {$u$}-invariants of fields of characteristic {$2$}.
\newblock {\em Math. Z.}, 208(3):335--347, 1991.

\bibitem[MS82]{MS82NormResidue}
A.~S. Merkurjev and A.~A. Suslin.
\newblock {$K$}-cohomology of {S}everi-{B}rauer varieties and the norm residue
  homomorphism.
\newblock {\em Izv. Akad. Nauk SSSR Ser. Mat.}, 46(5):1011--1046, 1135--1136,
  1982.

\bibitem[NSW08]{NeukirchCohNumField08}
J{\"u}rgen Neukirch, Alexander Schmidt, and Kay Wingberg.
\newblock {\em Cohomology of number fields}, volume 323 of {\em Grundlehren der
  Mathematischen Wissenschaften [Fundamental Principles of Mathematical
  Sciences]}.
\newblock Springer-Verlag, Berlin, second edition, 2008.

\bibitem[PPS18]{ParimalaPreetiSuresh2018}
R.~Parimala, R.~Preeti, and V.~Suresh.
\newblock Local--global principle for reduced norms over function fields of
  {$p$}-adic curves.
\newblock {\em Compos. Math.}, 154(2):410--458, 2018.

\bibitem[PS14]{PS14invent}
R.~Parimala and V.~Suresh.
\newblock Period-index and u-invariant questions for function fields over
  complete discretely valued fields.
\newblock {\em Invent. Math.}, 197(1):215--235, 2014.

\bibitem[PS15]{PreetiSoman15}
R.~Preeti and A.~Soman.
\newblock Adjoint groups over {$\Bbb{Q}_p(X)$} and {R}-equivalence.
\newblock {\em J. Pure Appl. Algebra}, 219(9):4254--4264, 2015.

\bibitem[Sal97]{Salt97}
David~J. Saltman.
\newblock Division algebras over {$p$}-adic curves.
\newblock {\em J. Ramanujan Math. Soc.}, 12(1):25--47, 1997.

\bibitem[Ser94]{SerCohGal94}
Jean-Pierre Serre.
\newblock {\em Cohomologie galoisienne}, volume~5 of {\em Lecture Notes in
  Mathematics}.
\newblock Springer-Verlag, Berlin, fifth edition, 1994.

\bibitem[Ser03]{Serre03inGMScohinv}
Jean-Pierre Serre.
\newblock Cohomological invariants, {W}itt invariants, and trace forms.
\newblock In {\em Cohomological invariants in {G}alois cohomology}, volume~28
  of {\em Univ. Lecture Ser.}, pages 1--100. Amer. Math. Soc., Providence, RI,
  2003.
\newblock Notes by Skip Garibaldi.

\bibitem[STW82]{ShapiroTignolWadsworth82JA}
Daniel~B. Shapiro, Jean-Pierre Tignol, and Adrian~R. Wadsworth.
\newblock Witt rings and {B}rauer groups under multiquadratic extensions. {II}.
\newblock {\em J. Algebra}, 78(1):58--90, 1982.

\bibitem[Sus85]{Suslin85}
A.~A. Suslin.
\newblock Algebraic {$K$}-theory and the norm-residue homomorphism.
\newblock {\em J. Soviet Math.}, \textbf{30}:2556--2611, 1985.

\bibitem[Tig81]{Tignol81LNM}
J.-P. Tignol.
\newblock Corps \`a involution neutralis\'{e}s par une extension ab\'{e}lienne
  \'{e}l\'{e}mentaire.
\newblock In {\em The {B}rauer group ({S}em., {L}es {P}lans-sur-{B}ex, 1980)},
  volume 844 of {\em Lecture Notes in Math.}, pages 1--34. Springer, Berlin,
  1981.

\bibitem[Wad02]{Wadsworth02}
A.~R. Wadsworth.
\newblock Valuation theory on finite dimensional division algebras.
\newblock In {\em Valuation theory and its applications, {V}ol. {I}
  ({S}askatoon, {SK}, 1999)}, volume~32 of {\em Fields Inst. Commun.}, pages
  385--449. Amer. Math. Soc., Providence, RI, 2002.

\end{thebibliography}

\

Yong HU

\medskip

Department of Mathematics

Southern University of Science and Technology

No. 1088, Xueyuan Blvd., Nanshan district

518055 Shenzhen, Guangdong,

P.R. China

Email: huy@sustech.edu.cn

\

Zhengyao WU

\medskip

Department of Mathematics

Shantou University

243 Daxue Road

515063 Shantou, Guangdong

P.R. China

Email: wuzhengyao@stu.edu.cn

\clearpage \thispagestyle{empty}

\end{document}